\documentclass[a4paper,11pt]{amsart}

\usepackage{amsmath,amsthm,amssymb}
\usepackage[mathscr]{eucal}
\usepackage{cite}
\usepackage[bookmarks,bookmarksnumbered]{hyperref}

\setlength{\oddsidemargin}{0pt}
\setlength{\evensidemargin}{0pt}
\setlength{\topmargin}{-5pt}
\setlength{\textheight}{640pt}
\setlength{\textwidth}{460pt}
\setlength{\headsep}{40pt}
\setlength{\parindent}{0pt}
\setlength{\parskip}{1ex plus 0.5ex minus 0.2ex}

\numberwithin{equation}{section}

\theoremstyle{plain}
\newtheorem{theorem}{Theorem}[section]

\newtheorem{lemma}[theorem]{Lemma}
\newtheorem{proposition}[theorem]{Proposition}
\newtheorem{corollary}[theorem]{Corollary}
\theoremstyle{definition}
\newtheorem{definition}[theorem]{Definition}

\newtheorem{remark}[theorem]{Remark}

\begin{document}

\title[Unbounded derivations and indecomposability results for II$_1$ factors]{Unbounded derivations, free dilations and indecomposability results for II$_1$ factors}

\author{Yoann Dabrowski and Adrian Ioana}
\thanks {Y.D. was partially supported by ANR grant NEUMANN}
\thanks{A.I. was partially supported by NSF  Grant DMS 1161047}
\address{ Universit\'{e} de Lyon\\
Universit\'{e} Lyon 1\\
Institut Camille Jordan UMR 5208\\
43 blvd. du 11 novembre 1918\\
F-69622 Villeurbanne cedex\\
France}
\email{dabrowski@math.univ-lyon1.fr}
\address{Mathematics Department; University of California, San Diego, CA 90095-1555 (United States).}
\email{aioana@ucsd.edu}

\maketitle

\begin{abstract} We give sufficient conditions, in terms of the existence of unbounded derivations satisfying certain properties, which ensure 
that a II$_1$ factor $M$ is prime or has at most one Cartan subalgebra. For instance, we prove that if there exists a real closable unbounded densely defined derivation $\delta:M\rightarrow L^2(M)\bar{\otimes}L^2(M)$ whose domain contains a non-amenability set, then $M$ is prime. If $\delta$ is moreover ``algebraic'' (i.e. 
its domain $M_0$ is finitely generated,  $\delta(M_0)\subset M_0\otimes M_0$ and $\delta^*(1\otimes 1)\in M_0$), then we show that $M$ has no Cartan subalgebra. We also give several applications to examples from free probability. Finally, we provide a class of countable groups $\Gamma$, defined through the existence of an unbounded cocycle $b:\Gamma\rightarrow \mathbb C(\Gamma/\Lambda)$, for some subgroup $\Lambda<\Gamma$, such that the II$_1$ factor $L^{\infty}(X)\rtimes\Gamma$ has a unique Cartan subalgebra, up to unitary conjugacy, for any free ergodic probability measure preserving (pmp) action $\Gamma\curvearrowright (X,\mu)$.

\end{abstract}

\section {Introduction and statement of main results}

\subsection{Background}
A central theme in the theory of von Neumann algebras is to investigate various decompositions of II$_1$ factors, such as tensor product and Cartan decompositions. Recall that a II$_1$ factor $M$ is {\it prime} if it cannot be written as the tensor product $M=M_1\bar{\otimes}M_2$ of two II$_1$ factors. Also, a maximal abelian von Neumann subalgebra $A\subset M$ is a {\it Cartan subalgebra} if its normalizer, $\mathcal N_{M}(A)=\{u\in\mathcal U(M)|uAu^*=A\}$, generates a weakly dense subalgebra of  $M$.

 The general goal of this paper is to  provide new classes of II$_1$ factors that are prime and have at most one Cartan subalgebra.
We start by giving a short history of results of this type. 

In \cite{Po83}, Popa proved that uncountable free groups give rise to II$_1$ factors that are prime and do not have Cartan subalgebras. The first examples of separable such II$_1$ factors were obtained in the mid 90s as an application of free probability theory.  
Thus, Voiculescu showed that  any II$_1$ factor admitting a generating set whose free  entropy dimension is greater than $1$ has no Cartan subalgebra \cite{Vo95}. In particular, 
the free group factors, $L(\mathbb F_n)$, with $2\leqslant n\leqslant\infty$, do not have Cartan subalgebras.
 Subsequently, Ge proved that the free group factors are also prime \cite{Ge96}.

During the last decade, Popa's deformation/rigidity theory has  generated spectacular progress in the study of II$_1$ factors. In particular, it has led to the first classes of II$_1$ factors that have a unique Cartan subalgebra, up to unitary conjugacy. We highlight here three major advances in this direction and refer the reader to the surveys  \cite{Po07,Va10,Io12b} for more information.    In \cite{Po01} Popa showed that any II$_1$ factor has at most one Cartan subalgebra which satisfies a certain combination of deformation and rigidity properties. Later on, Ozawa and Popa found the first class of II$_1$ factors that have a unique arbitrary Cartan subalgebra \cite{OP07}. More precisely, they showed that  any II$_1$ factor $L^{\infty}(X)\rtimes\mathbb F_n$  arising from a free ergodic profinite pmp action $\mathbb F_n\curvearrowright (X,\mu)$ of a free group $\mathbb F_n$ ($2\leqslant n\leqslant\infty$) has a unique Cartan subalgebra, up to unitary conjugacy. Very recently, Popa and Vaes vastly generalized this result by proving that it holds for any free ergodic  pmp action $\mathbb F_n\curvearrowright (X,\mu)$ \cite{PV11}.

In the last ten years, the primeness and absence of Cartan subalgebras of the free group factors \cite{Vo95,Ge96}  have been generalized and strengthened in many ways.  Firstly, Ozawa proved that II$_1$ factors arising from hyperbolic groups $\Gamma$ are {\it solid}: the relative commutant $A'\cap L(\Gamma)$ of any diffuse subalgebra $A\subset L(\Gamma)$ is amenable \cite{Oz03}. In particular,  $L(\Gamma)$ and all of its non-amenable subfactors are prime. Secondly, using a technique based on closable derivations,
 Peterson was able to show that II$_1$ factors arising from  groups with positive first $\ell^2$-Betti number are prime \cite{Pe06}. 

Using his deformation/rigidity theory, Popa then found a new proof of solidity for $L(\mathbb F_n)$  \cite{Po06b}. Popa's approach relies on the remarkable discovery \cite{Po06a}  that the presence of spectral gap can be viewed as a source of rigidity. This {\it spectral gap rigidity} principle has since been the catalyst behind many developments in deformation/rigidity theory. Thus, it was a crucial ingredient in the finding of II$_1$ factors with a unique Cartan subalgebra \cite{OP07,PV11}. 
In \cite{OP07}, Ozawa and Popa used the spectral gap rigidity principle to show that the free group factors enjoy a  structural property, called {\it strong solidity}, which strengthens both solidity and absence of Cartan subalgebras: the normalizer of any diffuse subalgebra $A\subset L(\mathbb F_n)$ is amenable.
Also using  Popa's spectral gap rigidity principle, Chifan and Sinclair showed that, more generally, the group von Neumann algebra of any icc hyperbolic group is strongly solid \cite{CS11}.

\subsection{Statement of main results} 
  In this paper, we use Popa's deformation/rigidity theory to prove  primeness and absence/uniqueness of Cartan subalgebras for II$_1$ factors in the presence of unbounded  closable derivations satisfying certain regularity properties.
   
 Our first result shows that if a non-amenable II$_1$ factor $M$ admits a closable unbounded derivation into its coarse bimodule which has a ``large"  domain, then $M$ is prime. To make this precise, let us introduce a definition.
If $M$ is a II$_1$ factor, then we say that a finite set $S\subset M$ is a {\it non-amenability set} if 
there exists a  constant $K>0$ such that $\|\xi\|_2\leqslant K\sum_{x\in S}\|x\xi-\xi x\|_2$, for every vector $\xi\in L^2(M)\bar{\otimes}L^2(M)$. 
Note that by Connes' theorem \cite{Co76}, $M$ is non-amenable if and only if it admits a non-amenability set.

\begin{theorem}\label{L^2-rigid} 
Let $M$ be a non-amenable II$_1$ factor and $M_0$ be a weakly dense $*$-subalgebra which contains a non-amenability set for $M$. 
Assume that there exists a real closable unbounded derivation $\delta:M_0\rightarrow (L^2(M)\bar{\otimes}L^2(M))^{\oplus\infty}$. 

Then $M$ is not L$^2$-rigid. In particular, $M$ is prime and does not have property Gamma.   

\end{theorem}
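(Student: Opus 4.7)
The strategy is to contradict $L^2$-rigidity directly; once this is achieved, the primeness and absence of property Gamma both follow from Peterson's theorem that non-amenable non-$L^2$-rigid II$_1$ factors are prime and lack property Gamma. Recall (Peterson) that $L^2$-rigidity of $M$ means that every closable real densely defined derivation $\delta:M_0\to\mathcal H$ into a Hilbert $M$-bimodule $\mathcal H$ weakly contained in the coarse bimodule is \emph{approximately inner}: there exist $\xi_n\in\mathcal H$ with $\|\delta(x)-(x\xi_n-\xi_n x)\|_2\to 0$ for every $x\in M_0$. Since our target bimodule $\mathcal H=(L^2(M)\bar{\otimes}L^2(M))^{\oplus\infty}$ is a subbimodule of an amplification of the coarse bimodule, this framework applies to the derivation at hand.

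Assume toward a contradiction that $M$ is $L^2$-rigid and let $(\xi_n)\subset\mathcal H$ witness approximate innerness of $\delta$. I would then upgrade ``approximately inner'' to ``actually inner'' using the non-amenability set. First, the non-amenability estimate extends from $L^2(M)\bar{\otimes}L^2(M)$ to $\mathcal H$: applying the coordinatewise bound, squaring, and invoking Cauchy--Schwarz, one produces a constant $K'=K\sqrt{|S|}$ with $\|\eta\|_2\le K'\bigl(\sum_{x\in S}\|x\eta-\eta x\|_2^2\bigr)^{1/2}$ for every $\eta\in\mathcal H$. Applied to $\eta=\xi_n-\xi_m$, each summand on the right is $\|(x\xi_n-\xi_n x)-(x\xi_m-\xi_m x)\|_2$, which tends to $0$ as $n,m\to\infty$ because both terms inside are $L^2$-approximants of $\delta(x)$. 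Hence $(\xi_n)$ is Cauchy, converging to some $\xi\in\mathcal H$; passing to the limit in $x\xi_n-\xi_n x\to\delta(x)$ yields $\delta(x)=x\xi-\xi x$ for all $x\in M_0$.

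This identity immediately gives $\|\delta(x)\|_2\le 2\|\xi\|_2\|x\|$ on $M_0$, exhibiting $\delta$ as bounded from $(M_0,\|\cdot\|)$ into $\mathcal H$ and contradicting the standing unboundedness hypothesis. Therefore $M$ is not $L^2$-rigid, and Peterson's theorem supplies primeness and the failure of property Gamma. The main delicate point is not this short Cauchy-sequence argument but rather interfacing cleanly with Peterson's $L^2$-rigidity framework---in particular, checking that the precise notion of approximate innerness used there indeed provides $\|\cdot\|_2$-convergence of $x\xi_n-\xi_n x$ to $\delta(x)$ on $M_0$ (for which one may need to pass to a convex-combination subnet or otherwise strengthen a weak convergence). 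Once that is set up, the interplay between Connes' non-amenability characterization (which produces the set $S$) and approximate innerness is what drives the whole proof.
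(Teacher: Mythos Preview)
Your argument has a genuine gap at the very first step. The definition of $L^2$-rigidity used here (and in Peterson's original paper) is that the semigroup $\phi_t=\exp(-t\delta^*\bar\delta)$ converges uniformly to $\mathrm{id}_M$ on $(M)_1$ as $t\to 0$, for every real closable densely defined derivation into a multiple of the coarse bimodule. This is \emph{not} the statement you invoke---that every such $\delta$ is approximately inner in the sense that $\|\delta(x)-(x\xi_n-\xi_n x)\|_2\to 0$ for some sequence $\xi_n$ and all $x\in M_0$. You flag this as ``the main delicate point'' but do not bridge it, and there is no obvious way to do so: the natural candidates for approximating inner derivations would be $\bar\delta\circ\phi_t$ or $\bar\delta\circ\eta_\alpha$ with $\eta_\alpha=\alpha(\alpha+\Delta)^{-1}$, but neither $\phi_t$ nor $\eta_\alpha$ is a $*$-homomorphism, so these compositions are not derivations and hence need not be inner.

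The paper circumvents this by invoking the dilation theorem of \cite{Da10a}: one builds a tracial $\tilde M\supset M$ and trace-preserving $*$-homomorphisms $\alpha_t:M\to\tilde M$ with $E_M\circ\alpha_t=\phi_t$, such that the $M$-$M$ bimodule $L^2(M_t)\ominus L^2(M)$ embeds into a multiple of the coarse bimodule. Uniform convergence of $\phi_t$ then yields, via a standard averaging argument, a unitary $v_t\in\tilde M$ with $\alpha_t(M)\subset v_tMv_t^*$ for small $t$. At that point a spectral-gap estimate driven by the non-amenability set $S\subset M_0$ gives $\|\alpha_t(x)-E_M(\alpha_t(x))\|_2\leqslant C\sum_{y\in S}\|\alpha_t(y)-y\|_2$ for all $x\in(M)_1$; dividing by $\sqrt t$ and sending $t\to 0$ produces $\|\delta(x)\|_2\leqslant C\sum_{y\in S}\|\delta(y)\|_2$, contradicting unboundedness. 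The dilation is exactly what supplies genuine $*$-homomorphisms in place of the completely positive maps $\phi_t$, and this is what makes the argument run.

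Your Cauchy-sequence step---once $(\xi_n)$ with the stated property exists, use the non-amenability inequality on $\xi_n-\xi_m$ to force convergence and conclude $\delta$ is inner---is correct and morally parallel to the paper's use of the non-amenability set. But the existence of such $(\xi_n)$ is precisely what is missing, and the dilation appears to be doing essential work that your outline does not replace.
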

Recall that if $\mathcal H$ is an $M$-$M$ bimodule, then a map $\delta:M_0\rightarrow\mathcal H$ is a {\it derivation} if it verifies that $\delta(xy)=x\delta(y)+\delta(x)y$, for all $x,y\in M_0$.
We say that $\delta$ is {\it bounded} if $\sup_{x\in M_0,\;\|x\|\leqslant 1}\|\delta(x)\|<\infty$. Note that $\delta$ is bounded if and only if it is {\it inner}, i.e. there exists $\xi\in\mathcal H$ such that $\delta(x)=x\xi-\xi x$, for all $x\in M_0$ (see the proof of \cite[Theorem 2.2]{Pe04}).
Also, recall that a II$_1$ factor $M$ has {\it property Gamma} of Murray and von Neumann  \cite{MvN43} if there exists  a sequence $u_n\in\mathcal U(M)$ such that $\tau(u_n)=0$, for all $n$, and $\|u_nx-xu_n\|_2\rightarrow 0$, for all $x\in M$. 

A II$_1$ factor $M$ is {\it L$^2$-rigid} in the sense of Peterson \cite{Pe06} if any semigroup $\phi_t=\exp(-t\delta^*\bar{\delta})$ arising from a real closable densely defined derivation $\delta$ into a multiple of the coarse bimodule converges uniformly to id$_M$ on the unit ball of $M$, as $t\rightarrow 0$. By \cite{Pe06} if a II$_1$ factor is not prime or has property Gamma, then it is $L^2$-rigid. On the other hand, if an icc group $\Gamma$ admits an unbounded cocycle $b:\Gamma\rightarrow \ell^2(\Gamma)^{\oplus\infty}$, then $L(\Gamma)$ is not $L^2$-rigid \cite{Pe06}. Theorem \ref{L^2-rigid} generalizes this fact and provides new examples of non-$L^2$-rigid factors.

Note that if $M_0$ does not contain a non-amenability set for $M$, then Theorem \ref{L^2-rigid} fails in general (see Remark \ref{dense}).

By \cite{Vo95,Ge96}, II$_1$ factors which admit a set of generators whose microstates free entropy is finite, $\chi(X_1,...,X_n)>-\infty$, do not have property Gamma, are prime and do not have Cartan subalgebras. In \cite{Vo98} Voiculescu introduced a non-microstates free entropy $\chi^*(X_1,...,X_n)$. The two entropies satisfy $\chi\leqslant\chi^*$  by \cite{BCG03} and are believed to be equal,  whenever Connes' embedding conjecture holds. Nevertheless, unlike its microstates counterpart, the non-microstates free entropy has not yet found applications to von Neumann algebras.   In particular, it is as open problem whether the above indecomposability results hold under the assumption $\chi^*(X_1,...,X_n)>-\infty$.
In this direction, it was shown in \cite{Da08} that if the assumption that $\chi^*(X_1,...,X_n)>-\infty$ is replaced with the stronger assumption that the free Fisher information is finite $\Phi^*(X_1,...,X_n)<\infty$, then the von Neumann algebra $M$ generated by $\{X_1,...,X_n\}$ is a II$_1$ factor without property Gamma.   

In this paper, we show that if we further strengthen the condition $\Phi^*(X_1,...,X_n)<\infty$ then we can conclude that $M$ is prime and does not have a Cartan subalgebra.
Firstly, as a consequence of Theorem \ref{L^2-rigid} we deduce the following.

\begin{corollary}\label{conjvar}
Let $(M,\tau)$ be a tracial von Neumann algebra which is generated by $n\geqslant 2$ algebraically free self-adjoint elements $X_1,...,X_n$. 
  Assume that  either
  \begin{itemize}
   \item $\mathscr J_p (X_i:\mathbb C\langle X_1,...,X_{i-1},X_{i+1},...,X_n\rangle)$ exists and belongs to $M$, for all $p\in\{1,2\}$ and every $i\in\{1,...,n\}$, or
   \item  $\Phi^*(X_1,...,X_n)<\infty$ and $n\geqslant 3$.
   \end{itemize}

Then $M$ is a non-L$^2$-rigid II$_1$ factor. In particular, $M$ is prime and does not have property Gamma.   
\end{corollary}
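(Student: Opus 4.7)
The plan is to apply Theorem \ref{L^2-rigid} to $M_0:=\mathbb C\langle X_1,\ldots,X_n\rangle$, which is weakly dense in $M$, together with the real derivation
\[
\delta:=\bigoplus_{i=1}^n \partial_i \;:\; M_0 \;\longrightarrow\; \bigl(L^2(M)\bar{\otimes} L^2(M)\bigr)^{\oplus n},
\]
where $\partial_i$ is Voiculescu's free difference quotient, characterized as the unique derivation with $\partial_i(X_j)=\delta_{ij}\,1\otimes 1$ extended by the Leibniz rule. Algebraic freeness of the $X_i$'s makes $\partial_i$ well defined, and self-adjointness of the $X_i$'s makes $\delta$ real in the required sense. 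The corollary then reduces to verifying (A) that $\delta$ is closable, and (B) that $M_0$ contains a non-amenability set for $M$.

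For (A), I would invoke Voiculescu's criterion \cite{Vo98}: $\partial_i$ is closable precisely when $1\otimes 1\in\operatorname{dom}(\partial_i^*)$, and in that case $\partial_i^*(1\otimes 1)$ equals the $L^2$-conjugate variable $\mathscr J_2(X_i:\mathbb C\langle X_j\,|\,j\neq i\rangle)$. Under the first hypothesis this $L^2$-conjugate variable is given as an element of $M\subset L^2(M)$, so $\partial_i$ is closable. Under the second hypothesis, $\Phi^*(X_1,\ldots,X_n)<\infty$ is by definition equivalent to the existence of all $L^2$-conjugate variables in $L^2(M)$, so the same criterion applies and each $\partial_i$, hence $\delta$, is closable.

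For (B), which is the substantive step, I would use the enhanced regularity of the conjugate variables to extract a quantitative commutator lower bound. Under the first hypothesis, both $\mathscr J_1$ and $\mathscr J_2$ lie in $M$, which permits the bounded-coefficient commutator manipulations of \cite{Da08}: coupling the identity $\partial_i^*(1\otimes 1)=\mathscr J_2(X_i:\ldots)$ with the Leibniz rule for $\partial_i$ and expanding $\langle \xi,\xi\rangle$ through the Stein-type identities yields an estimate of the form
\[
\sum_{i=1}^n \|X_i\xi-\xi X_i\|_2^2 \;\geq\; c\,\|\xi\|_2^2 \;-\; \sum_{j=1}^{k}\|y_j\xi-\xi y_j\|_2^2,
\]
for every $\xi\in L^2(M)\bar{\otimes} L^2(M)$, where $c>0$ and $y_1,\ldots,y_k\in M_0$ are polynomial expressions in the $X_i$'s arising from bracketing $X_i$ against the conjugate variables. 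The finite set $\{X_1,\ldots,X_n,y_1,\ldots,y_k\}\subset M_0$ is then a non-amenability set. Under the second hypothesis, the assumption $n\geq 3$ provides enough free complementary directions to bootstrap the mere $L^2$-integrability of $\mathscr J_2$ to the higher $L^p$-integrability needed to run the same commutator computation, following the $L^p$-regularity scheme of \cite{Da08}.

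The main obstacle is precisely the quantitative estimate in (B): passing from integrability of the conjugate variables to a genuine spectral gap with explicit witnesses inside $M_0$ is not formal. The delicate point is that the error terms produced by the expansion must be expressible as commutators with \emph{finitely many} additional elements of $M_0$, and this is where the boundedness of $\mathscr J_1,\mathscr J_2$ (respectively the $n\geq 3$ bootstrap under the second hypothesis) is really consumed. Once (A) and (B) are in hand, Theorem \ref{L^2-rigid} delivers non-L$^2$-rigidity, hence primeness and the absence of property Gamma for $M$.
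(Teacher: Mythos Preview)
Your overall plan for the first bullet --- apply Theorem~\ref{L^2-rigid} to the free difference quotient $\delta=(\partial_1,\ldots,\partial_n)$ on $M_0=\mathbb C\langle X_1,\ldots,X_n\rangle$ --- matches the paper. But several pieces are either missing or not justified.

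\textbf{Missing hypotheses for Theorem~\ref{L^2-rigid}.} You never check that $M$ is a II$_1$ factor nor that $\delta$ is \emph{unbounded}. The paper gets factoriality from \cite[Theorem~13]{Da08}, and unboundedness by showing (via $\chi^*(X_i)>-\infty$ and \cite[Proposition~4.5]{Vo94}) that each $X_i$ has nonatomic distribution, so $\delta$ cannot be inner. These are not hard, but Theorem~\ref{L^2-rigid} does not apply without them. (Also a notational slip: in the paper $\partial_i^*(1\otimes 1)$ is $\mathscr J_1$, the \emph{first}-order conjugate variable; $\mathscr J_2=\partial_i^*(\xi_i\otimes 1)$.)

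\textbf{Step (B) under the first bullet.} Your claimed inequality
\[
\sum_i \|X_i\xi-\xi X_i\|_2^2 \;\geq\; c\,\|\xi\|_2^2 - \sum_j \|y_j\xi-\xi y_j\|_2^2
\quad\text{for all }\xi\in L^2(M)\bar\otimes L^2(M)
\]
is not what \cite{Da08} proves. The arguments there (Lemmas~9--10, Remark~11) work for $\xi\in L^2(M)\ominus\mathbb C1$ and establish that $\{X_1,\ldots,X_n\}$ is a \emph{non-Gamma} set; the Voiculescu identity $x\otimes 1-1\otimes x=\sum_i[\partial_i(x),X_i\otimes 1-1\otimes X_i]$ does not give you control over arbitrary $\xi$ in the coarse bimodule. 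The paper closes this gap by invoking Lemma~\ref{non-amen}, which uses Connes' argument to upgrade any non-Gamma set to a non-amenability set. You should route through that lemma rather than asserting a direct coarse-bimodule estimate.

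\textbf{The second bullet.} Here your proposal has a genuine gap. There is no ``$L^p$-regularity bootstrap'' in \cite{Da08} that produces bounded (or $L^p$) conjugate variables from $\Phi^*<\infty$ alone, and without boundedness the non-Gamma argument for $\{X_1,\ldots,X_n\}$ does not go through. The paper does \emph{not} use the free difference quotient for this case. Instead (see Corollary~\ref{liber}) it sets $A_1=W^*(X_1)$ and $A_2=W^*(X_2,\ldots,X_n)$, passes to the \emph{liberation} derivation $\delta_{A_2}$ (which vanishes on $A_2$), and obtains the non-amenability set inside $A_2$ from Connes' theorem: since $n-1\geq 2$ and $\Phi^*(X_2,\ldots,X_n)<\infty$, \cite[Theorem~13]{Da08} makes $A_2$ a non-amenable II$_1$ factor, so it contains a non-amenability set $S$; and $S\subset A_2\subset D(\delta_{A_2})$. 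This is precisely why the hypothesis $n\geq 3$ appears. Your approach, as written, does not explain how $n\geq 3$ is consumed and does not produce a non-amenability set inside the domain of the derivation you chose.
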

To recall the definition of the $p$-th order conjugate variable $\xi_{p,i}=\mathscr J_p (X_i:\mathbb C\langle X_1,...,X_{i-1},X_{i+1},...,X_n\rangle)$, for $p\in\{1,2\}$, let $M_0$ be the $*$-algebra generated by $X_1,...,X_n$.  Let $\delta_i:M_0\rightarrow L^2(M)\bar{\otimes}L^2(M)$ be the partial free difference quotient  derivation given by $\delta_i(X_j)=\delta_{i,j}1\otimes 1$. Then the first and second order conjugate variables are defined as $\xi_{1,i}=\delta_i^*(1\otimes 1)\in L^2(M)$ and $\xi_{2,i}=\delta_i^*(\xi_{1,i}\otimes 1)\in L^2(M)$, whenever these formulas make sense (see \cite[Definition 3.1]{Vo98}).  If the first order conjugate variables $\xi_{1,i}$ exist, then the free Fisher information is given by $\Phi^*(X_1,...,X_n)=\sum_{i=1}^n\|\xi_{1,i}\|_2^2$.

Corollary \ref{conjvar} implies that if $X_1,...,X_n$ are self-adjoint elements of a tracial von Neumann algebra $(M,\tau)$, for some $n\geqslant 2$, and $S_1,...,S_n\in M$ are free semicircular elements which are free from $X_1,...,X_n$,  then  $X_1^{\varepsilon}=X_1+\varepsilon S_1,...,X_n^{\varepsilon}=X_n+\varepsilon S_n$ generate a  prime II$_1$ factor, for any $\varepsilon>0$.  Indeed, by  \cite[Corollary 3.9]{Vo98}  $X_1^{\varepsilon},...,X_n^{\varepsilon}$ satisfy the above assumption on conjugate variables. 
Moreover, by using \cite{Io12a}, we can show that the II$_1$ factor generated by $X_1^{\varepsilon},...,X_n^{\varepsilon}$ does not have a Cartan subalgebra (see Theorem \ref{epsilon}). 
Note that in the case the von Neumann algebra generated by $\{X_1,...,X_n\}$ is embeddable into $R^{\omega}$, the last two facts follow from \cite{Vo95,Vo97,Ge96}.
See Section \ref{reg} for indecomposability results for more general ``regularized" algebras.

Secondly, by assuming a Lipschitz condition on  conjugate variables 
 \cite[Definition 1]{Da10b} we are able to deduce absence of Cartan subalgebras.

\begin{theorem}\label{freedef} Let $(M,\tau)$ be a tracial von Neumann algebra which is generated by $n\geqslant 2$ algebraically free self-adjoint elements $X_1,...,X_n$. Let $M_0$ be the $*$-algebra generated by $X_1,...,X_n$. For $1\leqslant i\leqslant n$,  denote by $\delta_i:M_0\rightarrow L^2(M)\bar{\otimes}L^2(M)$ the free difference quotient $\delta_i(X_j)=\delta_{i,j}1\otimes 1$. 

Let $\delta=(\delta_1,...,\delta_n):M_0\rightarrow (L^2(M)\bar{\otimes}L^2(M))^{\oplus_n}$ and $\bar{\delta}$ be the closure of $\delta$.
Assume that $1\otimes 1$ is in the domain of $\delta_i^*$ and denote $\xi_{i}=\delta_i^*(1\otimes 1)$. Moreover, assume that $\xi_i$ is in the domain of $\bar{\delta}$ and $\bar{\delta}(\xi_i)\in (M\bar{\otimes}M^{op})^{\oplus_n}$, for all $1\leqslant i\leqslant n$. Here, $M^{op}$ denotes the opposite algebra of $M$, and we consider the inclusion $M\bar{\otimes}M^{op}\subset L^2(M\bar{\otimes}M^{op})\cong L^2(M)\bar{\otimes}L^2(M)$.

Then $M$ is a II$_1$ factor which does not have a Cartan subalgebra. Moreover, $M\bar{\otimes}Q$ does not  have a Cartan subalgebra, for any II$_1$ factor $Q$.
\end{theorem}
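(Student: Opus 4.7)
The plan is to reduce Theorem \ref{freedef} to the ``algebraic derivation'' no-Cartan theorem announced in the abstract. The factoriality claim will come essentially for free from previously known Fisher information estimates, and the no-Cartan conclusion will require upgrading the Lipschitz-type hypothesis on the conjugate variables into the algebraic framework of the main theorem, via a suitable extension of the domain (and ultimately via a free dilation).

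\textbf{Step 1 (factoriality).} Since the first order conjugate variables $\xi_i = \delta_i^*(1\otimes 1)\in L^2(M)$ exist, we have $\Phi^*(X_1,\dots,X_n) = \sum_i \|\xi_i\|_2^2 < \infty$. By \cite{Da08}, $M$ is a II$_1$ factor without property Gamma; this already gives the factoriality assertion. (Alternatively, Corollary \ref{conjvar} gives primeness and non-Gamma under the same hypothesis.)

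\textbf{Step 2 (boundedness of the conjugate variables).} The hypothesis $\bar\delta(\xi_i)\in (M\bar\otimes M^{op})^{\oplus n}$ is precisely the Lipschitz condition of \cite{Da10b}. A standard argument using the closability of $\delta$, the derivation identity, and the fact that $\bar\delta(\xi_i)$ acts boundedly on $L^2(M)$ (iterating $\bar\delta$ on polynomial expressions in the $X_i$ and $\xi_i$ to bootstrap) forces $\xi_i \in M$. With this in hand, set $\widetilde M_0 = \mathbb C\langle X_1,\dots,X_n,\xi_1,\dots,\xi_n\rangle$, which is still finitely generated, lies in the domain of $\bar\delta$, and satisfies $\bar\delta^*(1\otimes 1) = (\xi_1,\dots,\xi_n)\in \widetilde M_0^{\oplus n}$.

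\textbf{Step 3 (the algebraic reduction via free dilation).} The remaining obstacle is that $\bar\delta(\widetilde M_0)$ is only known to lie in $(M\bar\otimes M^{op})^{\oplus n}$, not in the algebraic tensor product $\widetilde M_0 \otimes \widetilde M_0$, so the algebraic hypothesis of the main theorem is not literally satisfied. To circumvent this, I would invoke the free dilation machinery (the heart of \cite{Da10b}), whose applicability is guaranteed precisely by the Lipschitz hypothesis: construct $\widetilde M \supset M$ with a free semicircular family $S_1,\dots,S_n$ such that the derivation $\bar\delta$ is realized by the commutators $\bar\delta_i(x) = [S_i, x]$ on $M$. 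On the enlarged $\ast$-algebra $\mathbb C\langle X_i, S_i\rangle\subset \widetilde M$, the induced free-difference-quotient derivation is genuinely algebraic in the sense of the abstract, contains a non-amenability set for $\widetilde M$ (since $M$ is non-amenable), and has its adjoint applied to $1\otimes 1$ in the algebra. Apply the main algebraic no-Cartan theorem to $\widetilde M$ (and, by replacing everywhere $\widetilde M$ by $\widetilde M\bar\otimes Q$, to $\widetilde M\bar\otimes Q$), and transfer the conclusion back to $M$ (respectively $M\bar\otimes Q$) via the standard structural properties of the free dilation inclusion $M\subset\widetilde M$, i.e. the fact that $M\subset\widetilde M$ is an inclusion for which a unique Cartan of $M$ would yield one in $\widetilde M$ (or enough of its normalizer structure).

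\textbf{Main obstacle.} The principal difficulty is Step 3: carefully implementing the free dilation under the Lipschitz condition and verifying that ``no Cartan'' descends from $\widetilde M$ (resp.\ $\widetilde M\bar\otimes Q$) to $M$ (resp.\ $M\bar\otimes Q$). The passage from a bounded (operator-norm) containment $\bar\delta(\widetilde M_0)\subset M\bar\otimes M^{op}$ to an algebraic one is the technical bottleneck, and this is exactly what the Lipschitz hypothesis, together with the free dilation, is designed to handle. Step 2 --- extracting $\xi_i\in M$ from $\bar\delta(\xi_i)\in M\bar\otimes M^{op}$ --- is a secondary but nontrivial point that should follow from the bootstrapping/closability arguments of \cite{Da10b}.
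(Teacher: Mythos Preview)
Your Step~1 and the statement of Step~2 are essentially what the paper does (the paper extracts $\xi_i\in M$ explicitly via Voiculescu's identity $\sum_i\bigl(\bar\delta_i(x)(X_i\otimes 1)-(1\otimes X_i)\bar\delta_i(x)\bigr)=x\otimes 1-1\otimes x$ applied to $x=\xi_i$, then gets second-order conjugate variables bounded too, so that $S=\{X_1,\dots,X_n\}$ is a non-Gamma and hence non-amenability set).

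Step~3, however, contains a genuine gap --- in fact two. First, the assertion that the free dilation realizes $\bar\delta_i$ as a commutator $\bar\delta_i(x)=[S_i,x]$ is false: a derivation of commutator form is inner, hence bounded, and the free difference quotient is not. What Theorem~\ref{free} (from \cite{Da10b}) actually produces under the Lipschitz hypothesis is a family of $*$-homomorphisms $\alpha_t:M\to M*L(\mathbb F_\infty)=\tilde M$ with $\frac{1}{\sqrt t}(\alpha_t(x)-x)\to\sum_i\delta_i(x)\#S_i^{(t)}$ in $\|\cdot\|_2$; this is a deformation of $M$, not an inner realization of $\delta$ on a larger algebra, and there is no algebraic derivation on $\mathbb C\langle X_i,S_i\rangle$ to which Corollary~\ref{cartan} applies. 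Second, even granting that $\tilde M$ has no Cartan, your ``transfer back'' is not a standard fact and fails in general: if $A\subset M$ is Cartan, its normalizer in $\tilde M=M*L(\mathbb F_\infty)$ generates only $M$, not $\tilde M$, so $A$ is not Cartan in $\tilde M$ and nothing is learned.

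The paper's argument is structurally different. It uses the $\alpha_t$ from Theorem~\ref{free} directly as a malleable deformation of $M$ and argues by contradiction: assume $A\subset M\bar\otimes Q$ is Cartan, and study the position of $\alpha_t(M)$ inside $\tilde M=M*L(\mathbb F_\infty)$. Three claims are established: (i) one can decompose $1=p_t+q_t+r_t$ in $\mathcal Z(\alpha_t(M)'\cap\tilde M)$ with $\alpha_t(M)p_t$ unitarily conjugate into $M$, $\alpha_t(M)q_t$ unitarily conjugate into $L(\mathbb F_\infty)$, and $\alpha_t(M)r_t$ not intertwining into either; (ii) $r_t=0$, using \cite[Theorem~6.3]{Io12a} and Theorem~\ref{general} applied to the normalizer of $\alpha_t(A)r_t''$; (iii) $\tau(q_t)\to 0$ as $t\to 0$, by non-relative-amenability of $M$ to $L(\mathbb F_\infty)$. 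Together these give $\|p_t-1\|_2\to 0$, and then Lemma~\ref{uniform}(2) yields $\|(\alpha_t(x)-E_M(\alpha_t(x)))p_t\|_2\leqslant C\sum_{y\in S}\|(\alpha_t(y)-y)p_t\|_2$ for all $x\in(M)_1$. Dividing by $\sqrt t$, letting $t\to 0$, and using the convergence in Theorem~\ref{free}, one obtains $\|\delta(x)\|_2\leqslant C\sum_{y\in S}\|\delta(y)\|_2$, so $\delta$ is bounded --- the desired contradiction. The point is that the deformation/rigidity argument is run on $M$ itself (via the deformation $\alpha_t$), not by appealing to an algebraic-derivation theorem on a larger algebra.
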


 Theorem \ref{freedef} generalizes a result of \cite{Da10b} where  by using microstates free entropy techniques and \cite{Vo95,Sh07}  it was shown that if $M$ is embeddable into $R^{\omega }$ then it has no Cartan subalgebras. 
 
In the second part of this paper we establish absence or uniqueness of Cartan subalgebras for II$_1$ factors $M$ admitting certain unbounded ``algebraic" derivations. 

Firstly, we show that, under fairly general conditions, the existence of a finitely generated weakly dense $*$-subalgebra $M_0\subset M$, a von Neumann subalgebra $B\subset M$, and an unbounded derivation $\delta:M_0\rightarrow L^2(\langle M,e_B\rangle)$ such that $\delta(M_0)\subset\text{span}\;(M_0e_BM_0)$ implies that  $M$ has no Cartan subalgebras (see Theorem \ref{nocartan}). Here, $\langle M,e_B\rangle$ denotes Jones' basic construction.

Let us state two  corollaries of this result. 
By Theorem \ref{L^2-rigid} if a II$_1$ factor $M$ admits a closable unbounded derivation $\delta:M_0\rightarrow L^2(M)\bar{\otimes}L^2(M)$ whose domain  contains a non-amenability set, then it is prime. We believe that the existence of such a derivation $\delta$ should also imply that $M$ does not have a Cartan subalgebra. However, proving this seems out of reach with the methods that are currently available.
Nevertheless, as a consequence of Theorem \ref{nocartan} we are able to confirm this conjecture if $\delta$ is algebraic.

\begin{corollary}\label{cartan}
 Let $M$ be a non-amenable II$_1$ factor and $M_0\subset M$ be a finitely generated weakly dense  $*$-subalgebra which contains a non-amenability set for $M$.
Assume that there exists a real unbounded derivation $\delta:M_0\rightarrow L^2(M)\bar{\otimes}L^2(M)$  such that $\delta(M_0)\subset M_0\otimes M_0$,  $1\otimes 1$ is in the domain of $\delta^*$ and  $\delta^*(1\otimes 1)\in M_0$. 
   
Then $M$ does not have a Cartan subalgebra. 
\end{corollary}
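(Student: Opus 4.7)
The plan is to reduce Corollary \ref{cartan} to Theorem \ref{nocartan} by taking $B = \mathbb{C}\cdot 1 \subset M$. With this choice, the Jones basic construction $\langle M, e_B\rangle$ is canonically identified with $B(L^2(M))$, equipped with the semifinite trace normalized by $\text{Tr}(xe_By) = \tau(xy)$ for $x,y \in M$. This identifies $L^2(\langle M,e_B\rangle)$ with the Hilbert--Schmidt operators on $L^2(M)$, and yields a canonical unitary $M$--$M$ bimodule isomorphism
\[ L^2(\langle M, e_B\rangle) \;\cong\; L^2(M)\bar{\otimes} L^2(M), \qquad xe_By \longleftrightarrow x\otimes y. \]
Under this isomorphism, $\text{span}(M_0 e_B M_0)$ corresponds precisely to $M_0 \otimes M_0 \subset L^2(M)\bar{\otimes}L^2(M)$.

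Transporting the derivation $\delta$ along this isomorphism produces a derivation $\tilde{\delta} : M_0 \to L^2(\langle M,e_B\rangle)$ satisfying $\tilde{\delta}(M_0) \subset \text{span}(M_0 e_B M_0)$. To invoke Theorem \ref{nocartan} it remains to verify the closability and regularity that the ``fairly general conditions'' of that theorem demand, and the hypotheses of Corollary \ref{cartan} are designed exactly for this. The condition $\delta^*(1\otimes 1)\in M_0$ translates to $e_B \in \text{dom}(\tilde{\delta}^*)$ with $\tilde{\delta}^*(e_B)\in M_0$; combined with the reality of $\delta$, a standard short induction using the Leibniz identity for $\delta^*$ applied to elements of the form $a(1\otimes 1)b$ in $M_0\otimes M_0$ shows that $\tilde{\delta}^*$ is densely defined on $\text{span}(M_0 e_B M_0)$, so $\tilde{\delta}$ is closable; moreover every element of $M_0$ lies in the domain of the generator $\tilde{\delta}^*\overline{\tilde{\delta}}$. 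The assumption that $M_0$ contains a non-amenability set for $M$ (so $M$ itself is non-amenable) is then precisely the non-amenability hypothesis needed by Theorem \ref{nocartan}, and its conclusion gives that $M$ has no Cartan subalgebra.

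The main obstacle will be verifying that the hypotheses of Theorem \ref{nocartan} are indeed met by the transported $\tilde{\delta}$: one must check that the identification $L^2(\langle M,e_\mathbb{C}\rangle)\cong L^2(M)\bar{\otimes}L^2(M)$ preserves all the algebraic structure invoked in that theorem's proof (derivation property, domain of $\tilde{\delta}$ and $\tilde{\delta}^*$, the adjoint relation, and any reality/symmetry assumption) and that the non-amenability set here plays the role it plays inside the proof of Theorem \ref{nocartan}, where it presumably drives the spectral-gap-rigidity step applied to the deformation coming from the semigroup $\exp(-t\tilde{\delta}^*\overline{\tilde{\delta}})$. Once this bookkeeping is done, Corollary \ref{cartan} is essentially a reformulation of Theorem \ref{nocartan} in the case $B = \mathbb{C}$.
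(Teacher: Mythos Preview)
Your reduction to Theorem \ref{nocartan} with $B=\mathbb{C}1$ is exactly the paper's proof, which is literally the one sentence ``Corollary \ref{cartan} corresponds precisely to the case $B=\mathbb{C}1$.'' Your extra work on closability and the semigroup $\exp(-t\tilde\delta^*\bar{\tilde\delta})$ is unnecessary---Theorem \ref{nocartan} does not assume closability and its proof uses the algebraic free dilation of Proposition \ref{dimagen} rather than the heat semigroup---while conversely you omitted checking the mixing hypothesis of Theorem \ref{nocartan}, which for $B=\mathbb{C}$ is vacuous since every subspace of $L^2(M)$ is trivially a mixing $\mathbb{C}$-$\mathbb{C}$ bimodule.
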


Corollary \ref{cartan} generalizes part of \cite[Corollary 1.5]{Io12a}. Indeed, it implies that the free product $M=M_1*M_2$ of any two finitely generated II$_1$ factors does not have a Cartan subalgebra. However,  we are unaware of any example of a II$_1$ factor which satisfies the hypothesis of Corollary \ref{cartan} and is not essentially a free product (see also Remark \ref{stallings}). Note that if $M$ is embeddable into $R^{\omega}$, then Corollary \ref{cartan} also follows from \cite{Sh07}.

As a consequence of Theorem \ref{nocartan} we also provide a general criterion for absence of Cartan subalgebras in amalgamated free product II$_1$ factors. 

\begin{corollary}\label{amalgam}
Let $(M_1,\tau_1)$ and $(M_2,\tau_2)$  be tracial von Neumann algebras with a common von Neumann subalgebra such that ${\tau_1}_{|B}={\tau_2}_{|B}$ and denote $M=M_1*_{B}M_2$. Assume that there exist unitary elements $u\in M_1$ and $v,w\in M_2$ such that $E_B(u)=E_B(v)=E_B(w)=E_B(w^*v)=0$. Suppose that either $uBu^*\perp B$ or $vBv^*\perp B$.

Then $M$ is a II$_1$ factor,  does not have a Cartan subalgebra and does not have property Gamma.
\end{corollary}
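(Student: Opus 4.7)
The plan is to apply Theorem \ref{nocartan} to the natural unbounded derivation associated with the amalgamated free product structure of $M = M_1 *_B M_2$.

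Let $M_0$ be the $*$-subalgebra of $M$ generated by $B \cup \{u, v, w\}$ and define $\delta : M_0 \to L^2(\langle M, e_B\rangle)$ by requiring $\delta$ to be a derivation, $\delta|_{M_1 \cap M_0} = 0$, and $\delta(y) = y e_B - e_B y$ for $y \in M_2 \cap M_0$. Since $e_B$ commutes with $B$, the two prescriptions agree (both giving $0$) on the overlap $B$, so $\delta$ is well defined on the algebraic free product. An induction on alternating-word length shows $\delta(M_0) \subseteq \operatorname{span}(M_0\, e_B\, M_0)$; moreover, a direct trace computation using $\operatorname{Tr}(x e_B y) = \tau(xy)$ shows that $e_B$ lies in the domain of $\delta^*$ with $\delta^*(e_B) \in M_0$, and that $\delta$ is a real closable derivation.

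The core of the argument is to show that $\{u, v, w\}$ furnishes a non-amenability set in the sense required by Theorem \ref{nocartan}. Expanding $\sum_{x \in \{u,v,w\}} \|x\xi - \xi x\|_2^2$ for $\xi$ in the relevant coarse-type bimodule and exploiting freeness over $B$, the conditions $E_B(u) = E_B(v) = E_B(w) = E_B(w^*v) = 0$ ensure that $u$ contributes an $M_1$-direction while $v$ and $w$ span two $B$-orthogonal directions in $M_2 \ominus B$. The perpendicularity hypothesis $uBu^* \perp B$ (or $vBv^* \perp B$) then upgrades the estimate to a genuine spectral gap transverse to $B$. Rigorously establishing this inequality is the main technical obstacle: the condition $E_B(w^*v) = 0$ is precisely what makes $v$ and $w$ honestly independent modulo $B$, and the perpendicularity hypothesis is what converts non-amenability relative to $B$ into honest non-amenability of the ambient algebra $M$.

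Once these two ingredients are in place, Theorem \ref{nocartan} yields at once that $M$ has no Cartan subalgebra. Factoriality of $M$ follows from standard amalgamated free product criteria, the perpendicularity hypothesis together with $E_B(u) = E_B(v) = 0$ ruling out any nontrivial central elements. Finally, applying Theorem \ref{L^2-rigid} to the same derivation $\delta$ (the non-amenability set being already provided by Step 2) shows that $M$ is not L$^2$-rigid, and in particular does not have property Gamma.
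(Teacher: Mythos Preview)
Your overall strategy—applying Theorem \ref{nocartan} to the commutator derivation $[\,\cdot\,, e_B]$—matches the paper's, but there is a real gap: Theorem \ref{nocartan} requires that for every nonzero projection $r \in B' \cap M$ there exist a mixing $B$-$B$ sub-bimodule $\mathcal{H} \subset L^2(M)$ with $r\mathcal{H}r \neq \{0\}$, and you never address this. In the paper this is precisely where the perpendicularity hypothesis $uBu^* \perp B$ (or $vBv^* \perp B$) enters: one picks $z_1 \in \{u,v\}$ with $z_1Bz_1^* \perp B$, sets $z = z_2z_1$, checks that each $\mathcal{H}_k = \overline{Bz^kB}^{\|.\|_2}$ is isomorphic to the coarse $B$-$B$ bimodule (hence mixing), and then uses an ergodic averaging argument to rule out $r\mathcal{H}_kr = 0$ for all $k$. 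By contrast, the proof that $\{u,v,w\}$ is a non-amenability set for $M$ relative to $B$ uses only the conditions $E_B(u) = E_B(v) = E_B(w) = E_B(w^*v) = 0$ together with the alternating-word decomposition of $L^2(\langle M, e_B\rangle)$; perpendicularity plays no role there, so your account of its function is misplaced.

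There are also several secondary issues. Theorem \ref{L^2-rigid} cannot be invoked for property Gamma: it needs a derivation into a multiple of the coarse bimodule and an absolute non-amenability set, whereas your $\delta$ lands in $L^2(\langle M, e_B\rangle)$ and your set is non-amenable only relative to $B$. This detour is unnecessary anyway, since Theorem \ref{nocartan} already yields absence of property Gamma. Factoriality is a \emph{hypothesis} of Theorem \ref{nocartan} and is not standard here; the paper proves $M' \cap M^\omega \subset B^\omega$ via \cite[Lemma 6.1]{Io12a} and then uses perpendicularity again to conclude $M' \cap M^\omega = \mathbb{C}1$. Finally, your $M_0 = \langle B, u, v, w\rangle$ need not be weakly dense in $M$ (the paper instead takes increasing finitely generated pieces of $M_1$ and $M_2$), and without the factor $i$ your $\delta(y) = [y, e_B]$ is not real.
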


For the definition of the amalgamated free product of tracial von Neumann algebras, see \cite{Po93} and \cite{VDN92}.
Following \cite{Po83}, we say that two von Neumann subalgebras $B_1,B_2$ of a tracial von Neumann algebra $(M,\tau)$ are orthogonal  if $\tau(b_1b_2)=\tau(b_1)\tau(b_2)$, for all $b_1\in B_1$ and $b_2\in B_2$.

Finally, we provide a new class of II$_1$ factors that have a unique Cartan subalgebra, up to unitary conjugacy. By \cite[Definition 1.4]{PV11} a countable group $\Gamma$, whose every free ergodic pmp  action $\Gamma\curvearrowright (X,\mu)$ gives rise to a II$_1$ factor  with a unique Cartan subalgebra,  is called $\mathcal C$-{\it rigid} (Cartan rigid). In a recent breakthrough, Popa and Vaes proved that  all weakly amenable groups  with a positive first $\ell^2$-Betti number and  all non-elementary hyperbolic  groups are $\mathcal C$-rigid \cite{PV11,PV12}. Most recently, it was shown in \cite[Theorem 1.1]{Io12a} that any amalgamated free product group $\Gamma=\Gamma_1*_{\Lambda}\Gamma_2$ such that $[\Gamma_1:\Lambda]\geqslant 2$, $[\Gamma_2:\Lambda]\geqslant 3$, and $\cap_{i=1}^mg_i\Lambda g_i^{-1}$ is finite, for some $g_1,...,g_m\in\Gamma$, is $\mathcal C$-rigid.

Our next result gives a cohomological criterion, in terms of the existence of a certain unbounded algebraic cocycle, for a group $\Gamma$ to be $\mathcal C$-rigid.

\begin{theorem}\label{unique}
Let $\Gamma$ be a countable icc group, $\Lambda<\Gamma$ be a subgroup and assume that there exists an unbounded cocycle $b:\Gamma\rightarrow\mathbb C(\Gamma/\Lambda)$ satisfying $b_{|\Lambda}\equiv 0$.
 Additionally, assume that
\begin{itemize}

\item $\cap_{i=1}^mg_i\Lambda g_i^{-1}$ is finite, for some  $g_1,g_2,...,g_m\in\Gamma$,
\vskip 0.02in
\item there exists an increasing sequence $\{\Gamma_n\}_{n\geqslant 1}$ of finitely generated subgroups of $\Gamma$ such that $\cup_{n\geqslant 1}\Gamma_n=\Gamma$ and $b(\Gamma_n)\subset\mathbb C(\Gamma_n\Lambda/\Lambda)$, for all $n\geqslant 1$, 
\vskip 0.02in
\item $L(\Gamma)$  does not have property Gamma and $\Lambda$ is not co-amenable in $\Gamma$. 

\end{itemize}

Then $L^{\infty}(X)$ is the unique Cartan subalgebra of $L^{\infty}(X)\rtimes\Gamma$, up to unitary conjugacy, for any free ergodic pmp action $\Gamma\curvearrowright (X,\mu)$.

\end{theorem}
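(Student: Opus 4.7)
The plan is to convert the algebraic cocycle $b$ into an unbounded derivation on the algebraic crossed product and then adapt the technology of Theorem \ref{nocartan} in the Popa-Vaes style to obtain uniqueness (rather than mere absence) of Cartan subalgebras. Write $M = L^{\infty}(X)\rtimes\Gamma$, $A = L^{\infty}(X)$, $B = A\rtimes\Lambda$, and let $M_0 = L^{\infty}(X)\rtimes_{\mathrm{alg}}\Gamma$. Using the quasi-regular representation $\pi$ of $\Gamma$ on $\ell^{2}(\Gamma/\Lambda)$ together with the cocycle $b$, Peterson's standard construction produces a closable real derivation $\delta: M_0 \to L^{2}(M)\otimes\ell^{2}(\Gamma/\Lambda)$ which vanishes on $A$ and satisfies $\delta(u_g) = u_g\otimes b(g)$, with bimodule structure given by $u_h(\xi\otimes v) = u_h\xi\otimes\pi(h)v$ and $(\xi\otimes v)u_h = \xi u_h\otimes v$. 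A direct change of basis identifies $L^{2}(M)\otimes\ell^{2}(\Gamma/\Lambda)$ with the Jones-basic-construction bimodule $L^{2}(\langle M, e_B\rangle)$, under which $e_B$ corresponds to $1\otimes e_\Lambda$ (where $e_\Lambda\in\ell^{2}(\Gamma/\Lambda)$ denotes the trivial coset); because $b|_\Lambda\equiv 0$ one computes $\delta^{*}(e_B) = 0\in M_0$.

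The filtration hypothesis $b(\Gamma_n)\subset\mathbb{C}(\Gamma_n\Lambda/\Lambda)$ implies that for the finitely generated subalgebras $M_{0,n} := L^{\infty}(X)\rtimes_{\mathrm{alg}}\Gamma_n$, the restriction of $\delta$ takes values in $\mathrm{span}\,(M_{0,n}e_{B}M_{0,n})$. Hence the algebraic condition of Theorem \ref{nocartan} is satisfied on each $M_{0,n}$ in a compatible manner. Moreover, the non-coamenability of $\Lambda$ in $\Gamma$ ensures that $M$ is non-amenable relative to $B$, providing the non-amenability input required to run spectral gap rigidity, while the no-Gamma assumption on $L(\Gamma)$ excludes the pathological central-sequence obstruction when transferring spectral gap estimates from $L(\Gamma)$ to subalgebras.

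The resulting deformation semigroup $\phi_t = \exp(-t\delta^{*}\bar\delta)$, combined with the Popa-Vaes spectral gap machinery as in \cite{PV11,PV12,Io12a}, then produces the following dichotomy for an arbitrary Cartan subalgebra $C\subset M$: either (i) $C\preceq_M B$, or (ii) $M = \mathcal N_M(C)''$ is amenable relative to $B$. Alternative (ii) is ruled out by non-coamenability of $\Lambda$. From (i), one upgrades to $C\preceq_M g_iBg_i^{-1}$ for each prescribed $g_1,\dots,g_m$ by conjugating and using regularity of $C$. The intersection lemma of \cite[Theorem 1.1]{Io12a}, combined with the finiteness of $\bigcap_{i}g_i\Lambda g_i^{-1}$, then yields $C\preceq_M A$. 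Since $C$ and $A$ are both Cartan subalgebras and $C\preceq_M A$, a standard theorem of Popa furnishes a unitary $u\in M$ with $uCu^{*} = A$.

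The main obstacle is the adaptation of the deformation/rigidity framework from absence to uniqueness, specifically the construction of a suitable malleable dilation of $(\phi_t)$ on which the Ozawa-Popa weak compactness of normalizers of Cartan subalgebras can be invoked. In the group-theoretic setting of \cite{PV11,PV12} this proceeds via the Gaussian functor applied to the underlying cocycle; in the derivation setting here, the analogous dilation must be built from the bimodule $L^{2}(M)\otimes\ell^{2}(\Gamma/\Lambda)$, and one must then verify that the algebraic hypothesis on $\delta$ together with the filtration by $\Gamma_n$ delivers the uniform spectral gap estimates required to close the argument even when $\Gamma$ itself is not finitely generated. The filtration condition is precisely what makes this uniformity available: it reduces each crucial estimate to one of the setting already handled in Theorem \ref{nocartan}, applied to the non-amenability sets inside the finitely generated subalgebras $M_{0,n}$.
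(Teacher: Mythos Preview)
Your outline has a concrete technical gap. You declare $M_{0,n} := L^{\infty}(X)\rtimes_{\mathrm{alg}}\Gamma_n$ to be ``finitely generated subalgebras'', but they are not: $L^{\infty}(X)$ is a diffuse abelian von Neumann algebra and is never finitely generated as a $*$-algebra, so neither is $M_{0,n}$. This matters because the exponentiation of the derivation to a one-parameter group of automorphisms of the free-product dilation (Proposition \ref{dimagen}) genuinely requires each $M_n$ in the filtration to be finitely generated; without this you have no malleable deformation on which to run the spectral gap/intertwining machinery. Your final paragraph correctly flags the construction of the dilation as the crux, but the finite-generation failure is exactly what blocks it in your setup.

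The paper sidesteps this by not placing the derivation on $M$ at all. Instead, $\delta$ lives on $\mathbb C\Gamma\subset L(\Gamma)$ with values in $L^2(\langle L(\Gamma),e_{L(\Lambda)}\rangle)$, where the filtration $\mathbb C\Gamma_n$ \emph{is} finitely generated; Proposition \ref{dimagen} then yields automorphisms $\alpha_t$ of $L(\tilde\Gamma)$ with $\tilde\Gamma=\Gamma*_\Lambda(\Lambda\times\mathbb Z)$. The passage to the crossed product is achieved \emph{after the fact} via the comultiplication-type embedding $\Delta:M\to M\bar\otimes L(\Gamma)$, $\Delta(au_g)=au_g\otimes u_g$, setting $\theta_t=(\mathrm{id}_M\otimes\alpha_t)\circ\Delta$. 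One then applies the tensor-product version of the amalgamated-free-product dichotomy (Theorem \ref{general}) inside $M\bar\otimes L(\tilde\Gamma)$. Note also that the alternatives produced are not simply your (i)/(ii): one must separately rule out $\alpha_t(L(\Gamma))\prec_{L(\tilde\Gamma)}L(\Gamma)$, $\alpha_t(L(\Gamma))\prec_{L(\tilde\Gamma)}L(\Lambda\times\mathbb Z)$, and relative amenability of $\alpha_t(L(\Gamma))$ over $L(\Lambda)$, which is where the unboundedness of $\delta$ and the non-amenability set (via Claims 3 and 4 of the proof of Theorem \ref{nocartan}, reused verbatim) enter. Only then does one obtain $\Delta(A)\prec M\bar\otimes L(\Lambda)$, and a transfer lemma (\cite[Lemma 7.2]{Io12a}) gives $A\prec_M L^\infty(X)\rtimes\Lambda$; from there your endgame with the $g_i$-conjugates and Popa's conjugacy theorem is correct.
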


Since amalgamated free product groups admit such algebraic cocycles, this theorem generalizes \cite[Theorem 1.1]{Io12a} recalled above.  Moreover, it leads to new examples of $\mathcal C$-rigid groups. 

\begin{corollary}\label{HNN}
Let $G$ be a countable group, $\Lambda<G$ be a subgroup, and $\theta:\Lambda\rightarrow G$ be an injective group homomorphism such that $\Lambda\not=G$ and $\theta(\Lambda)\not=G$. Denote by $\Gamma=\text{HNN}(G,\Lambda,\theta)$ the corresponding HNN extension. Assume that $\cap_{i=1}^mg_i\Lambda g_i^{-1}$ is finite, for some $g_1,g_2,...,g_m\in\Gamma$.

Then $L^{\infty}(X)$ is the unique Cartan subalgebra of $L^{\infty}(X)\rtimes\Gamma$, up to unitary conjugacy, for any free ergodic pmp action $\Gamma\curvearrowright (X,\mu)$.

\end{corollary}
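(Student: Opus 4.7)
The plan is to deduce Corollary \ref{HNN} from Theorem \ref{unique} by producing an unbounded cocycle $b:\Gamma\rightarrow\mathbb C(\Gamma/\Lambda)$ from the natural action of $\Gamma=\text{HNN}(G,\Lambda,\theta)$ on its Bass-Serre tree $T$, whose vertex set is $\Gamma/G$ and whose oriented edge set is $\Gamma/\Lambda$, with the base edge $e_0=\Lambda$ joining the base vertex $v_0=G$ to $t\cdot v_0$, where $t$ is the stable letter. The cocycle is defined by letting $b(g)$ be the signed sum of oriented edges on the unique geodesic from $v_0$ to $g\cdot v_0$; this is the standard $1$-cocycle attached to a group acting on a tree, with values in the permutation representation on $\mathbb C(\Gamma/\Lambda)$.

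Since $\Lambda$ stabilizes $e_0$, and hence fixes $v_0$, we have $b|_\Lambda\equiv 0$. Evaluating $b$ on the powers $t^n$ shows that $b$ is unbounded. For the filtration, choose any increasing sequence $\{\Gamma_n\}_{n\geqslant 1}$ of finitely generated subgroups of $\Gamma$ with union $\Gamma$. For $\gamma\in\Gamma_n$, writing $\gamma$ as a word in its generators and concatenating the corresponding walks in $T$ from $v_0$ to $\gamma\cdot v_0$ shows that every edge traversed lies in the $\Gamma_n$-orbit of $e_0$, i.e.\ in $\Gamma_n\Lambda/\Lambda$. Passing to the reduced geodesic only removes edges, so $b(\Gamma_n)\subset\mathbb C(\Gamma_n\Lambda/\Lambda)$.

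It remains to verify that $\Gamma$ is icc, that $L(\Gamma)$ does not have property Gamma, and that $\Lambda$ is not co-amenable in $\Gamma$. Because $\Lambda\neq G$ and $\theta(\Lambda)\neq G$, every vertex of $T$ has degree at least $[G:\Lambda]+[G:\theta(\Lambda)]\geqslant 4$, so the $\Gamma$-action on $T$ is non-elementary and contains hyperbolic isometries. Combined with the hypothesis that $\cap_{i=1}^m g_i\Lambda g_i^{-1}$ is finite, which forces elements fixing a sufficiently long segment of $T$ to lie in a finite set, the action is acylindrical. The icc property then follows by splitting cases: hyperbolic elements have infinitely many translates of their axis; any elliptic element contributing to the FC-center would produce a finite normal subgroup contained in $\cap_{\gamma\in\Gamma}\gamma\Lambda\gamma^{-1}$, which is contained in the finite group $\cap_{i=1}^m g_i\Lambda g_i^{-1}$, and one rules this out by using $\Lambda\neq G$, $\theta(\Lambda)\neq G$ to find a conjugate that intersects it trivially. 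Non-co-amenability of $\Lambda$ amounts to the absence of almost invariant vectors in $\ell^2(\Gamma/\Lambda)$, which is the content of the non-amenability of the $\Gamma$-action on the tree $T$; property Gamma failure follows in the same spirit from spectral gap for this action. The main obstacle is precisely this last paragraph: converting ``non-elementary acylindrical action on $T$'' into the structural statements on $L(\Gamma)$ (non-Gamma) and on $\Lambda\subset\Gamma$ (non-co-amenability), for which one appeals to the machinery developed for groups acting on trees; the cocycle construction and filtration property are comparatively routine.
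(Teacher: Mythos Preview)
Your cocycle construction via the Bass--Serre tree is essentially the same as the paper's: the paper defines $b$ by $b(g)=0$ for $g\in G$ and $b(t)=t\Lambda$, which is exactly the tree cocycle you describe. The unboundedness and the filtration property are handled similarly, though note that for the filtration you cannot take \emph{arbitrary} finitely generated $\Gamma_n$: your argument only shows that the edges appearing in $b(\gamma)$ lie in the $\Gamma_n$-translates of the finitely many edges on the geodesics from $v_0$ to $s\cdot v_0$ for $s$ a generator, and those edges need not themselves lie in $\Gamma_n\Lambda/\Lambda$. The paper avoids this by taking $\Gamma_n$ generated by $t$ together with a finitely generated $G_n\subset G$, so that every generator $s$ already satisfies $b(s)\in\mathbb C(\Gamma_n\Lambda/\Lambda)$.

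The genuine gap is in your treatment of the icc condition. You assert that one can ``rule out'' a nontrivial finite normal subgroup contained in $\bigcap_{\gamma\in\Gamma}\gamma\Lambda\gamma^{-1}$ using only $\Lambda\neq G$ and $\theta(\Lambda)\neq G$. This is false: take any nontrivial finite group $F$, any group $H$ with a proper subgroup $K$ and an injection $\theta_0:K\to H$ with $\theta_0(K)\neq H$, and set $G=F\times H$, $\Lambda=F\times K$, $\theta=\mathrm{id}_F\times\theta_0$. Then $F\times\{e\}$ is a nontrivial finite normal subgroup of $\Gamma=\mathrm{HNN}(G,\Lambda,\theta)$, so $\Gamma$ is not icc and Theorem~\ref{unique} does not apply. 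The paper handles this by first passing to the quotient $\Gamma_0=\Gamma/N$ where $N=\bigcap_{g\in\Gamma}g\Lambda g^{-1}$, noting that $\Gamma_0$ is again an HNN extension of the same shape with $\bigcap_j p(h_j)\Lambda_0 p(h_j)^{-1}=\{e\}$, and then invoking the slightly more general Theorem~\ref{unique2} (which allows a finite normal kernel). For non-inner-amenability and non-co-amenability the paper gives a short, self-contained combinatorial argument (Lemma~\ref{inner}) using the normal form in $\Gamma_0$; your appeal to acylindricity is plausible in spirit but, as you acknowledge, does not by itself yield the non-Gamma conclusion for $L(\Gamma)$ without an additional step (non-inner-amenability, not merely spectral gap for the tree action).
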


Corollary \ref{HNN} strengthens and generalizes the main result of \cite{FV10}. Indeed, \cite[Theorem 1.1]{FV10} shows that if we moreover assume that  $G$ contains a non-amenable subgroup with the relative property (T) or two commuting non-amenable subgroups, and that $\Lambda$ is amenable, then $L^{\infty}(X)\rtimes\Gamma$ has a unique group measure space Cartan subalgebra, for any free ergodic pmp action $\Gamma\curvearrowright (X,\mu)$.

\subsection{Comments on the proofs} Let us say a few words about the proofs of Theorems \ref{L^2-rigid}, Theorem \ref{freedef} and Corollary \ref{cartan}, since they are representative of the proofs of all the results stated above.
Consider a real unbounded closable derivation $\delta:M_0\rightarrow (L^2(M)\bar{\otimes}L^2(M))^{\oplus n}$, for some $1\leqslant n\leqslant\infty$, which satisfies the corresponding regularity conditions. Recall that $M_0$ contains a non-amenability set $S$ for $M$. Our goal is to prove either that $M$ is not $L^2$-rigid  (as in Theorem \ref{L^2-rigid}) or that $M$ has no Cartan subalgebra (as in Theorem \ref{freedef} and Corollary \ref{cartan}).

To this end, we use results from free probability theory to construct a malleable deformation of $M$, in the sense of Popa.
This consists of a tracial von Neumann algebra $\tilde M$ containing $M$ and 
a pointwise $\|.\|_2$-continuous path $\{\alpha_t\}_{t\geqslant 0}$ of $*$-homomorphisms $\alpha_t:M\rightarrow\tilde M$ such that $\alpha_0=\text{id}_M$. Moreover, the pair $(\tilde M,\{\alpha_t\}_{t\geqslant 0})$ is a ``dilation of $\delta$" in the following broad sense: the limit $\lim_{t\rightarrow 0}\frac{1}{t}\|\alpha_t(x)-x\|_2$ exists and is determined by $\delta$, for all $x\in M_0$. 

By \cite{Da10a}, any real closable derivation admits a dilation. Moreover, for derivations into a multiple of the coarse bimodule, such as $\delta$, \cite{Da10a} provides additional information on the dilation. More precisely,  if $M_t$ denotes the von Neumann algebra generated by $M$ and $\alpha_t(M)$, then the $M$-$M$ bimodule $L^2(M_t)\ominus L^2(M)$ is contained in a multiple of the coarse bimodule, for any $t\geqslant 0$. 

If  $M$ is $L^2$-rigid,  then the semigroup $\phi_t=\exp(-t\delta^*\bar{\delta})$ converges uniformly to id$_M$ on the unit ball of $M$, as $t\rightarrow 0$.  This readily entails that $\alpha_t$ converges uniformly to id$_M$, as $t\rightarrow 0$. As a consequence, for every small enough $t$, there exists a unitary $u_t\in\tilde M$ such that $u_t\alpha_t(M)u_t^*\subset M$. 
 Since $S\subset M$ is a non-amenability set, a variation of Popa's spectral gap argument  \cite{Po06a} (see Lemma \ref{uniform}) allows us to find $K>0$ such that $$(*)\;\;\;\;\;\|\alpha_t(x)-E_M(\alpha_t(x))\|_2\leqslant K\sum_{y\in S}\|\alpha_t(y)-y\|_2,\;\;\text{for all}\;\;\; x\in (M)_1\;\;\text{and}\;\;t\geqslant 0.$$ 

Since $S\subset M_0$ and $\alpha_t$ dilates $\delta$, this inequality implies that $\delta$ is bounded. This is a contradiction, proving that $M$ is not $L^2$-rigid, as claimed by Theorem \ref{L^2-rigid}.

Now, assume that $\delta$ is the free difference quotient and the conjugate variables satisfy a Lipschitz condition (as in Theorem \ref{freedef}), or $\delta$ is algebraic (as in Corollary \ref{cartan}). Then results from \cite{Da10b} and \cite{Sh07} imply that the dilation algebra $\tilde M$ can be taken equal to $M*L(\mathbb F_{\infty})$. In this case, we say that $\delta$ admits a ``free dilation". 

We then use techniques from Popa's deformation/rigidity theory to prove that $M$ does not have a Cartan subalgebra. In particular, we employ the recent work  \cite{Io12a} (which notably uses \cite{PV11}) on the structure of normalizers of subalgebras of amalgamated free product algebras. By  combining  \cite{Io12a} with  Lemma \ref{uniform} we show that if $M$ has a Cartan subalgebra, then $(*)$ holds. As above, this provides a contradiction.
 
\subsection{Organization of the paper} Besides the introduction, this paper has seven other sections. In Section 2 we record several notions and results that we will later use. In Section 3 we recall known results on dilating derivations. 
Sections 4-8 are devoted to the proofs of our main results.

\subsection{Acknowledgments} We are grateful to Cyril Houdayer, Jesse Peterson and Stefaan Vaes for helpful discussions and useful comments.

\section{Preliminaries}
\subsection{Terminology} 
We work with {\it  tracial} von Neumann algebras $(M,\tau)$, i.e. von Neumann algebras $M$  endowed with a faithful, normal, tracial state $\tau$. We denote by $\|x\|_2=\tau(x^*x)^{1/2}$  the  2-norm associated to $\tau$ and by $\|x\|$ the operator norm.
 We denote by $\mathcal Z(M)$ the {\it center} of $M$, by $\mathcal U(M)$ the {\it group of unitaries} of $M$ and by $(M)_1=\{x\in M|\hskip 0.02in\|x\|\leqslant 1\}$ the {\it unit ball} of $M$. We always assume that $M$ is {\it separable}, unless it is a subalgebra of an ultraproduct algebra.

A tracial von Neumann algebra $(M,\tau)$ is called {\it amenable} if there exists a net $\xi_n\in L^2(M)\bar{\otimes}L^2(M)$ such that $\langle x\xi_n,\xi_n\rangle\rightarrow \tau(x)$ and $\|x\xi_n-\xi_n x\|_2\rightarrow 0$, for every $x\in M$. By Connes' celebrated theorem \cite{Co76}, $M$ is amenable if and only if it is approximately finite dimensional.

For a free ultrafilter $\omega$ on $\mathbb N$, the {\it ultraproduct} algebra $M^{\omega}$ is defined as the quotient  $\ell^{\infty}(\mathbb N,M)/\mathcal I$, where $\mathcal I\subset\ell^{\infty}(\mathbb N,M)$ is the closed ideal of $x=(x_n)_n$ such that $\lim_{n\rightarrow\omega}\|x_n\|_2=0$. As it turns out, $M^{\omega}$ is a tracial von Neumann algebra, with its canonical trace given by $\tau_{\omega}((x_n)_n)=\lim_{n\rightarrow\omega}\tau(x_n)$.

If $M$ and $N$ are tracial von Neumann algebras, then an $M$-$N$ {\it bimodule} is a Hilbert space $\mathcal H$ endowed with commuting normal $*$-homomorphisms $\pi:M\rightarrow \mathbb B(\mathcal H)$ and $\rho:N^{op}\rightarrow \mathbb B(\mathcal H)$. For $x\in M,y\in N$ and $\xi\in\mathcal H$ we denote $x\xi y=\pi(x)\rho(y)(\xi)$. If $M,N,P$ are tracial von Neumann algebras, $\mathcal H$ and $\mathcal K$ be $M$-$N$ and $N$-$P$ bimodules, respectively, then $\mathcal H{\otimes}_N\mathcal K$ denotes the {\it Connes tensor product}  endowed with the natural $M$-$P$ bimodule structure (see \cite{Po86}).

Let $Q\subset M$ be a von Neumann subalgebra. {\it Jones' basic construction} $\langle M,e_Q\rangle$ is defined as the von Neumann subalgebra of $\mathbb B(L^2(M))$ generated by $M$ and the orthogonal projection $e_Q$ from $L^2(M)$ onto $L^2(Q)$.
Recall that $\langle M,e_Q\rangle$ has a faithful semi-finite  trace given by $Tr(xe_Qy)=\tau(xy)$ for all $x,y\in M$. We denote by $L^2(\langle M,e_Q\rangle)$ the associated Hilbert space and endow it with the natural $M$-bimodule structure. Note that $L^2(\langle M,e_Q\rangle)\cong L^2(M){\otimes}_QL^2(M)$, as $M$-$M$ bimodules.

Finally, if $S$ is a subset of a von Neumann algebra $\mathcal M$, then a state $\phi$ on $\mathcal M$ is said to be $S$-{\it central} if it satisfies $\phi(xT)=\phi(Tx)$, for all $x\in S$ and $T\in\mathcal M$.

\subsection {Intertwining-by-bimodules} We next recall from  \cite [Theorem 2.1 and Corollary 2.3]{Po03}  Popa's powerful {\it intertwining-by-bimodules} technique (see also \cite[Appendix C]{Va06}).

\begin {theorem}\cite{Po03}\label{corner} Let $(M,\tau)$ be a separable tracial von Neumann algebra and $P,Q\subset M$ be two (not necessarily unital) von Neumann subalgebras. 
Then the following are equivalent:

\begin{itemize}

\item There exist  non-zero projections $p\in P, q\in Q$, a $*$-homomorphism $\phi:pPp\rightarrow qQq$  and a non-zero partial isometry $v\in qMp$ such that $\phi(x)v=vx$, for all $x\in pPp$.

\item There is no sequence $u_n\in\mathcal U(P)$ satisfying $\|E_Q(xu_ny)\|_2\rightarrow 0$, for all $x,y\in M$.
\end{itemize}

If one of these conditions holds true,  then we say that {\it a corner of $P$ embeds into $Q$ inside $M$} and write $P\prec_{M}Q$.
\end{theorem}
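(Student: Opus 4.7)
The plan is to work inside Jones' basic construction $\langle M,e_Q\rangle$ equipped with its semifinite trace $Tr$, and prove the equivalence by introducing the intermediate condition: \emph{there exists a nonzero projection $f\in P'\cap\langle M,e_Q\rangle$ with $Tr(f)<\infty$}. Both directions of the theorem factor through this projection, which encodes the existence of a ``finite right-$Q$ subbimodule of $L^2(M)$ that is $P$-invariant''.

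The equivalence between $\neg$(ii) and the existence of such $f$ proceeds via a convex-hull / center-of-mass analysis. By a compactness/separability argument, $\neg$(ii) is equivalent to the existence of finitely many $x_1,\dots,x_n,y_1,\dots,y_n\in M$ and $\delta>0$ such that $\sum_i\|E_Q(x_iuy_i)\|_2^2\geqslant\delta$ for every $u\in\mathcal U(P)$. Repackaging these data as a positive $Tr$-class element $\eta\in\langle M,e_Q\rangle^+$ (built from sums of terms like $x_i^*e_Qy_iy_i^*e_Qx_i$) translates the inequality into $\langle u\eta u^*,\eta\rangle_{Tr}\geqslant\delta'>0$, uniformly in $u\in\mathcal U(P)$. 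Hence the $Tr$-weakly closed convex hull of $\{u\eta u^*:u\in\mathcal U(P)\}$ inside $L^2(\langle M,e_Q\rangle,Tr)$ is bounded away from zero; its unique $L^2(Tr)$-minimal element is $\mathcal U(P)$-invariant under conjugation by uniqueness, giving a nonzero $P$-central positive $Tr$-class element, and spectral calculus produces $f$. The reverse implication is a direct $Tr$-computation: the existence of $f$ forces $\|E_Q(xu_ny)\|_2$ to stay bounded below along every sequence $u_n\in\mathcal U(P)$ for appropriately chosen $x,y\in M$ with $Tr(fxe_Qy^*)\neq 0$.

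The equivalence between (i) and the existence of $f$ is where the concrete intertwiner structure enters. One direction is direct: from $(\phi,v)$ as in (i), the element $v^*e_Qv\in p\langle M,e_Q\rangle p$ is $pPp$-central (using $e_Q\phi(x)=\phi(x)e_Q$, which gives $v^*e_Qvx=v^*e_Q\phi(x)v=v^*\phi(x)e_Qv=xv^*e_Qv$) with $Tr(v^*e_Qv)=\tau(v^*v)<\infty$, and a standard averaging over $\mathcal U(P)$-translates lifts this to a $P$-central finite-trace projection. The reverse direction is the main technical obstacle of the proof: from such a projection $f$, one must extract the concrete data $(\phi,v)$. Using the identification $L^2(\langle M,e_Q\rangle)\cong L^2(M)\otimes_Q L^2(M)$, the image $fL^2(M)\subset L^2(M)$ is a finitely generated right $Q$-submodule (by finiteness of $Tr(f)$) carrying a commuting left $P$-action; after compressing by a minimal central projection of $P'\cap f\langle M,e_Q\rangle f$ to reduce to a factor, this left action factors through a unital $*$-homomorphism $\phi:pPp\to qQq$, and the embedding $fL^2(M)\hookrightarrow L^2(M)$ as a right $Q$-module yields the partial isometry $v\in qMp$ satisfying $\phi(x)v=vx$, via the Murray--von Neumann correspondence between finitely generated right-$Q$ subbimodules of $L^2(M)$ and compressions of $Q$ by equivalent finite projections.
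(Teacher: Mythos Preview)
The paper does not prove this statement. Theorem~\ref{corner} is simply recalled in the preliminaries from \cite[Theorem 2.1 and Corollary 2.3]{Po03} (with a further pointer to \cite[Appendix C]{Va06}); no argument appears beyond the citation, so there is nothing in the paper to compare your proposal against.

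That said, your sketch is precisely the standard route taken in those references: introduce the intermediate condition ``there exists a nonzero projection $f\in P'\cap\langle M,e_Q\rangle$ with $Tr(f)<\infty$'', prove $\neg$(ii)$\Leftrightarrow$(intermediate) by the convex-hull/minimal-norm argument in $L^2(\langle M,e_Q\rangle,Tr)$, and extract $(\phi,v)$ from the finitely generated right $Q$-module $fL^2(M)$. Two small points to tighten if you ever write this out: (a) the positive $Tr$-class element to average is normally taken of the symmetric form $d=\sum_k z_k e_Q z_k^*$ for a \emph{single} finite family $\{z_k\}$ containing both the $x_i^*$ and the $y_j$, so that $Tr(d\,u\,d\,u^*)=\sum_{j,k}\|E_Q(z_j^*uz_k)\|_2^2$ dominates the original sum; your proposed $\eta$ built from terms $x_i^*e_Qy_iy_i^*e_Qx_i$ does not directly produce an expression of the shape $\langle u\eta u^*,\eta\rangle_{Tr}$. (b) In the non-unital setting the element $v^*e_Qv$ is $pPp$-central but need not commute with all of $P$ (take $P$ a factor and $p\neq 1_P$), so the implication (i)$\Rightarrow$(intermediate) is cleanest obtained by verifying (i)$\Rightarrow\neg$(ii) directly rather than via $v^*e_Qv$.
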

If $M$ is not separable, then this statement holds true after we replace the sequence $u_n$ with a net.

\subsection {Relative amenability}\label{relativeamen}

\begin{definition} \cite[Definition 2.2]{OP07} Let $(M,\tau)$ be a tracial von Neumann algebra and let  $P\subset pMp$, $Q\subset M$ be  von Neumann subalgebras.
We say that
 $P$ is {\it amenable relative to $Q$ inside $M$} if there exists a net $\xi_n\in L^2(p\langle M,e_Q\rangle p)$ such that $\langle x\xi_n,\xi_n\rangle\rightarrow \tau(x)$, for every $x\in pMp$, and $\|y\xi_n-\xi_n y\|_2\rightarrow 0$, for every $y\in P$. 
 By \cite[Theorem 2.1]{OP07}, this condition is equivalent to the existence of a $P$-central state $\phi$ on $p\langle M,e_Q\rangle p$ such that $\phi_{|pMp}=\tau_{|pMp}$. 
 \end{definition}

\begin{remark}
Let $\Lambda<\Gamma$  be countable subgroups. By \cite[Proposition 3.5]{AD95}, $L(\Gamma)$ is amenable relative to $L(\Lambda)$ if and only if $\Lambda$ is {\it co-amenable} in $\Gamma$: there is a $\Gamma$-invariant state on $\ell^{\infty}(\Gamma/\Lambda)$.
\end{remark}

The failure of an algebra to be amenable (or amenable relative to some other algebra) can therefore be viewed as a source of ``spectral gap rigidity". The notion of spectral gap rigidity has been introduced by Popa and has been used to great effect for instance in \cite{Po06a,Po06b,OP07}. Motivated by this, we introduce the following:

\begin{definition}
 Let $(M,\tau)$ be a tracial von Neumann algebra and $P,Q\subset M$ be  von Neumann subalgebras.

\begin{enumerate}
\item A finite set $S\subset M$ is called a {\it non-amenability set for} $M$ if there exists a constant $K>0$ such that $\|\xi\|_2\leqslant K\sum_{y\in S}\|y\xi-\xi y\|_2$, for every $\xi\in L^2(M)\bar{\otimes}L^2(M)$. 
\item A finite set $S\subset P$ is 
called a {\it non-amenability set for $P$ relative to $Q$ inside $M$} if there exists a constant $K>0$ such that  $\|\xi\|_2\leqslant K\sum_{y\in S}\|y\xi-\xi y\|_2$,  for every $\xi\in L^2(\langle M,e_Q\rangle)$.
\end{enumerate}
\end{definition}

\begin{remark}
 If $M$ has a non-amenability set, then $M$ has no amenable direct summand. If $M$ is a II$_1$ factor, then by Connes' theorem \cite{Co76} the converse is  true:  $M$ has a non-amenability set if and only it is non-amenable.
Note also that if there is a non-amenability set for $P$ relative to $Q$, then $Pp$ is not amenable relative to $Q$, for any projection $p\in P'\cap M$.
\end{remark}

We will later need the following result  which is an easy consequence of \cite[Section 2.2]{OP07} (see also \cite[Section 2.5]{PV11}).

\begin{lemma}\label{gap}\cite{OP07}
Let $(M,\tau)$ be a tracial von Neumann algebra and let $P,Q\subset M$ be  von Neumann subalgebras. Let $\mathcal H$ be a $Q$-$P$ bimodule.

\begin{enumerate}
\item Assume that $Pp$ is not amenable relative to $Q$, for any non-zero projection $p\in P'\cap M$. Then for any net of vectors $\xi_n\in L^2(M)\otimes_Q\mathcal H$ satisfying $\sup_n\|x\xi_n\|\leqslant \|x\|_2$, for all $x\in M$, and $\|y\xi_n-\xi_n y\|\rightarrow 0$, for all $y\in P$, we have that $\|\xi_n\|\rightarrow 0$. 

\item If $S\subset P$ is a non-amenability set for $P$ relative to $Q$ inside $M$, then there exists a constant $\kappa>0$ such that $\|\xi\|\leqslant \kappa\sum_{y\in S}\|y\xi-\xi y\|$, for all $\xi\in L^2(M){\otimes}_Q\mathcal H$. 
 \end{enumerate}
\end{lemma}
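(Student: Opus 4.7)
The plan is to invoke Popa's characterization \cite[Theorem 2.1]{OP07}: $Pp\subset pMp$ is amenable relative to $Q$ if and only if there exists a $Pp$-central state on $p\langle M,e_Q\rangle p$ whose restriction to $pMp$ is the normalized trace. The crucial set-up is that $\langle M,e_Q\rangle$ acts naturally on $L^2(M)\otimes_Q\mathcal H$ via its action on the first tensor factor; this action extends the left $M$-action and commutes with the right $P$-action. Consequently, any $\xi\in L^2(M)\otimes_Q\mathcal H$ induces a positive normal functional $\omega_\xi(T):=\langle T\xi,\xi\rangle$ on $\langle M,e_Q\rangle$, represented by a density $\rho_\xi\in L^1(\langle M,e_Q\rangle)_+$ with $\mathrm{Tr}(\rho_\xi)=\|\xi\|^2$.

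\textbf{Part (1).} I will argue by contradiction: suppose, after passing to a subnet, that $\|\xi_n\|\geqslant\varepsilon>0$. Let $\phi$ be a weak$^*$-cluster point along an ultrafilter of the positive functionals $\phi_n(T)=\langle T\xi_n,\xi_n\rangle$ on $\langle M,e_Q\rangle$. The boundedness hypothesis $\|x\xi_n\|\leqslant\|x\|_2$ will force $\phi|_M$ to be $\|\cdot\|_2$-continuous and hence normal, of the form $\phi(x)=\tau(ax)$ with $a\in L^1(M)_+$ and $\tau(a)\geqslant\varepsilon^2$. The almost-commutation $\|y\xi_n-\xi_n y\|\to 0$ for $y\in P$ will yield $P$-centrality of $\phi$, which forces $a\in P'\cap L^1(M)$. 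For $c>0$ small enough, $p:=\chi_{(c,\infty)}(a)\neq 0$ lies in $P'\cap M$, and the bounded inverse $b:=a^{-1}p\in P'\cap M$ is well-defined. A routine check using $b^{1/2}\in P'$ and the trace property of $\tau$ then shows that $\psi(T):=\tau(p)^{-1}\phi(b^{1/2}Tb^{1/2})$ is a $Pp$-central state on $p\langle M,e_Q\rangle p$ whose restriction to $pMp$ is the normalized trace. By \cite[Theorem 2.1]{OP07}, $Pp$ is amenable relative to $Q$, contradicting the hypothesis.

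\textbf{Part (2).} I will again argue by contradiction, now reducing via a Powers--Stormer estimate to the assumed spectral gap in $L^2(\langle M,e_Q\rangle)$. If no such $\kappa$ exists, there are unit vectors $\xi_n\in L^2(M)\otimes_Q\mathcal H$ with $\sum_{y\in S}\|y\xi_n-\xi_n y\|\to 0$; by decomposing each $y\in S$ as a linear combination of unitaries in $P$ (at the cost of enlarging the constant $K$), I may assume $S\subset\mathcal U(P)$. Set $\eta_n:=\rho_{\xi_n}^{1/2}\in L^2(\langle M,e_Q\rangle)$, a unit vector. For unitary $y\in P$, the fact that the $\langle M,e_Q\rangle$-action on $L^2(M)\otimes_Q\mathcal H$ commutes with the isometric right $P$-action will yield $\|\rho_{\xi_n}-y\rho_{\xi_n}y^*\|_1\leqslant 2\|y\xi_n-\xi_n y\|$. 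The Powers--Stormer inequality $\|a^{1/2}-b^{1/2}\|_2^2\leqslant\|a-b\|_1$, together with the identity $(y\rho_{\xi_n}y^*)^{1/2}=y\eta_n y^*$, then gives $\|y\eta_n-\eta_n y\|_2=\|\eta_n-y\eta_n y^*\|_2\leqslant\sqrt{2}\,\|y\xi_n-\xi_n y\|^{1/2}$. Summing over $S$ yields $\sum_{y\in S}\|y\eta_n-\eta_n y\|_2\to 0$, and the non-amenability set hypothesis for $L^2(\langle M,e_Q\rangle)$ will then force $1=\|\eta_n\|_2\to 0$, a contradiction.

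\textbf{Main obstacle.} The hardest step will be the $L^1$-bound $\|\rho_{\xi_n}-y\rho_{\xi_n}y^*\|_1\leqslant 2\|y\xi_n-\xi_n y\|$ in part (2): the right $P$-action on $\mathcal H$ is a priori unrelated to any right action on $L^2(M)$, so this must be derived from the matrix coefficient identity $\mathrm{Tr}(T(\rho_{\xi_n}-y\rho_{\xi_n}y^*))=\langle T\xi_n,\xi_n\rangle-\langle Ty\xi_n,y\xi_n\rangle$, using the commutation of the $\langle M,e_Q\rangle$-action with the right $P$-action to rewrite $\langle Ty\xi_n,y\xi_n\rangle=\langle T\xi_n y,\xi_n y\rangle$ modulo a commutator term of size $O(\|y\xi_n-\xi_n y\|)$. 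Once this identity is in place, Powers--Stormer and the spectral gap assumption on $L^2(\langle M,e_Q\rangle)$ close the argument.
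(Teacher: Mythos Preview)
Your proposal is correct. For part (1), the paper simply cites \cite[Lemma 2.3]{Io12a}, and your argument is precisely the standard one underlying that citation.

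For part (2), your route and the paper's differ slightly. The paper forms the ultralimit state $\psi(T)=\lim_n\langle\pi(T)\xi_n,\xi_n\rangle$ on $\langle M,e_Q\rangle$, observes it is $C^*(S)$-central, and then invokes the ``standard procedure'' from \cite[Theorem 2.1]{OP07} (approximate by normal states, Hahn--Banach, Powers--St{\o}rmer) as a black box to produce almost-central unit vectors in $L^2(\langle M,e_Q\rangle)$. You instead exploit directly that each $\omega_{\xi_n}$ is already normal (since $\pi$ is), take its $L^1$-density $\rho_{\xi_n}$, and apply Powers--St{\o}rmer to $\eta_n=\rho_{\xi_n}^{1/2}$. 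This bypasses both the ultralimit and the Hahn--Banach step, and makes the dependence on the commutation of $\pi(\langle M,e_Q\rangle)$ with the right $P$-action explicit in your $L^1$-estimate. The paper's route is shorter on the page because it outsources the work; yours is more self-contained. One small point to keep straight: after replacing $S$ by a finite set $S'\subset\mathcal U(P)$, the contradiction at the end must use that $S'$ (not $S$) is a non-amenability set in $L^2(\langle M,e_Q\rangle)$, which does follow from the linear-combination-of-unitaries argument you indicate.
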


\begin{proof} The first assertion is a rephrasing of \cite[Lemma 2.3]{Io12a}.

To prove the second assertion, let $S$ be a non-amenability set for $P$ relative to $Q$.
Assuming that the conclusion fails, we can find a sequence of unit vectors $\xi_n\in L^2(M){\otimes}_Q\mathcal H$ such that $\|y\xi_n-\xi_n y\|\rightarrow 0$, for all $y\in S$. Choose a state on $\ell^{\infty}(\mathbb N)$, denoted $\lim_n$,  extending the usual limit. Also, consider the normal $*$-homomorphism $\pi:
\langle M,e_Q\rangle\rightarrow\mathbb B(L^2(M)\otimes_Q\mathcal H)$ given by $\pi(T)(\xi\otimes_Q\eta)=T(\xi)\otimes_Q\eta$.

Define  $\psi:\langle M,e_Q\rangle \rightarrow\mathbb C$ by letting $\psi(T)=\lim_{n}\langle\pi(T)\xi_n,\xi_n\rangle$. Then $\psi$ is a $S$-central state. Moreover, $\psi$ is $C^*(S)$-central, where $C^*(S)$ denotes the $C^*$-algebra generated by $S$. A standard procedure (see the proof of \cite[Theorem 2.1]{OP07}) implies the existence of a net of unit vectors $\eta_i\in  L^2(\langle M,e_Q\rangle)$ such that $\|y\eta_i-\eta_i y\|\rightarrow 0$, for all $y\in\mathcal U(C^*(S))$. Thus, $\|y\eta_i-\eta_i y\|\rightarrow 0$, for all $y\in S$, contradicting the non-amenability of $S$.
\end{proof}

The next lemma is a variant of Popa's spectral gap argument \cite{Po06a}. It will be later used (e.g. in the proof of Theorem \ref{L^2-rigid}) to deduce boundedness of a derivation $\delta$ from the uniform convergence of the semigroup $\phi_t=\exp(-t\delta^*\bar{\delta})$.

\begin{lemma}\label{uniform} Let $(\tilde M,\tau)$ be a tracial von Neumann algebra and $M$ be a von Neumann subalgebra. Let $P,Q\subset M$  be von Neumann subalgebras. Assume that the $M$-$M$ bimodule $L^2(\tilde M)\ominus L^2(M)$ is isomorphic to $L^2(M)\otimes_{Q}\mathcal K,$ for some $Q$-$M$ bimodule $\mathcal K$.

 Let $\alpha_n:M\rightarrow\tilde M$, $n\geqslant 1$, be trace preserving $*$-homomorphisms such that $\|\alpha_n(x)-x\|_2\rightarrow 0$, for all $x\in M$.  Assume that $p_n\in\alpha_n(P)'\cap\tilde M$ is a projection and $v_n\in\tilde M$ is a unitary such that $\alpha_n(P)p_n\subset v_nM{v_n}^*$, for all $n\geqslant 1$.
\begin{enumerate}
\item If $Pp$ is not amenable relative to $Q$ inside $M$, for every non-zero projection $p\in P'\cap M$, then $\sup_{x\in (P)_1}\|(\alpha_n(x)-E_M(\alpha_n(x)))p_n\|_2\rightarrow 0$, as $n\rightarrow\infty$.

\item If $S\subset P$ is a non-amenability set for $P$ relative to $Q$ inside $M$, then  there exists a constant $C>0$ such that for all $n\geqslant 1$ we have $$\|(\alpha_n(x)-E_M(\alpha_n(x)))p_n\|_2\leqslant C\sum_{y\in S}\|(\alpha_n(y)-y)p_n\|_2,\;\;\text{for all}\;\; x\in (P)_1.$$
\end{enumerate}

\end{lemma}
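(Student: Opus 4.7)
The plan is to apply Lemma \ref{gap} to the $\mathcal H$-projection of the vector $(\alpha_n(x)-E_M(\alpha_n(x)))p_n$ for $x \in (P)_1$, following Popa's spectral gap strategy from \cite{Po06a}. Writing $\mathcal H := L^2(\tilde M)\ominus L^2(M) \cong L^2(M)\otimes_Q\mathcal K$, $z_n := \alpha_n(x) - E_M(\alpha_n(x)) \in \mathcal H$, $\xi_n^x := z_np_n$, and $\eta_n^x := \xi_n^x - E_M(\xi_n^x) \in \mathcal H$, I would first use the Pythagorean identity
$$\|(\alpha_n(x)-E_M(\alpha_n(x)))p_n\|_2^2 = \|\eta_n^x\|_2^2 + \|E_M(\xi_n^x)\|_2^2$$
to split the problem. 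Since both $z_n$ and $p_n - E_M(p_n)$ lie in $\mathcal H$, a direct expansion gives $E_M(\xi_n^x) = E_M(z_n(p_n - E_M(p_n)))$, so by Cauchy--Schwarz the $L^2(M)$-piece is bounded by $2\|p_n - E_M(p_n)\|_2$. This $x$-free quantity will itself be controlled by applying Lemma \ref{gap}(2) to the fixed vector $p_n - E_M(p_n)\in\mathcal H$.

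The heart of the argument is the commutator estimate for $\eta_n^x$. For $y \in P$, the $M$-bimodularity of $E_M$ gives $\|y\eta_n^x-\eta_n^x y\|_2 \le \|[y,\xi_n^x]\|_2$; by Leibniz this equals $\|[y,z_n]p_n + z_n[y,p_n]\|_2$. I would control the $z_n[y,p_n]$ piece using $[y,p_n]=[y-\alpha_n(y),p_n]$ (from $p_n\in\alpha_n(P)'$) and the tracial identity $\|p_nw\|_2 = \|w^*p_n\|_2$, obtaining a bound $C(\|(\alpha_n(y)-y)p_n\|_2 + \|(\alpha_n(y^*)-y^*)p_n\|_2)$. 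For the $[y,z_n]p_n$ piece I would expand $[y,\alpha_n(x)] = [y-\alpha_n(y),\alpha_n(x)] + \alpha_n([y,x])$; the first summand contributes terms bounded by $\|(\alpha_n(y)-y)p_n\|_2$ after using $\alpha_n(x)p_n = p_n\alpha_n(x)$. The delicate summand $\alpha_n([y,x])p_n$, whose $L^2$-norm is a priori $\le 2\|y\|$, will be handled via the key identity
$$\alpha_n([y,x])\,p_n = [\alpha_n(y), \alpha_n(x)p_n] = [\alpha_n(y)p_n, \alpha_n(x)p_n],$$
valid because $p_n\in\alpha_n(P)'$ and $\alpha_n(y)(1-p_n)$ commutes with $\alpha_n(x)p_n$. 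Both factors then lie in $v_nMv_n^*$; expanding $\alpha_n(z)p_n = v_nzv_n^*p_n + (\alpha_n(z)-v_nzv_n^*)p_n$ for $z\in\{x,y\}$, with $L^2$-error of order $\|(\alpha_n(z)-z)p_n\|_2$ after conjugation by $v_n$, the leading pieces reassemble into $[y,x]p_n \in M\cdot p_n$, whose $\mathcal H$-projection $[y,x](p_n-E_M(p_n))$ is absorbed by the $\|p_n-E_M(p_n)\|_2$ estimate from the first paragraph, while the cross terms are controlled by $\|(\alpha_n(\cdot)-\cdot)p_n\|_2$ quantities.

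Combining these estimates and symmetrizing $S$ by adjoints (which preserves the non-amenability-set condition up to a multiplicative factor), I would arrive at
$$\|y\eta_n^x - \eta_n^x y\|_2 \le C_0\sum_{y'\in S}\|(\alpha_n(y')-y')p_n\|_2,\qquad y\in S,\ x\in(P)_1,$$
with $C_0$ independent of $x$. Part (2) then follows from Lemma \ref{gap}(2): $\|\eta_n^x\|_2 \le \kappa\sum_{y\in S}\|y\eta_n^x-\eta_n^x y\|_2$, which combined with the $L^2(M)$-piece bound yields the announced constant $C$. Part (1) is a contradiction argument: failure of the uniform convergence produces $x_n \in (P)_1$ and $\varepsilon>0$ with $\|\eta_n^{x_n}\|_2\ge\varepsilon$; the normalized vectors $\tilde\eta_n := \eta_n^{x_n}/\|\eta_n^{x_n}\|_2 \in \mathcal H$ are uniformly bounded in operator norm and, by the same commutator estimate combined with $\|(\alpha_n(y)-y)p_n\|_2\le\|\alpha_n(y)-y\|_2\to 0$ for each fixed $y\in P$, asymptotically $P$-central; Lemma \ref{gap}(1) together with the hypothesis that $Pp$ is not amenable relative to $Q$ for any non-zero projection $p\in P'\cap M$ then forces $\|\tilde\eta_n\|_2\to 0$, a contradiction. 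The main obstacle throughout will be the $\alpha_n([y,x])p_n$ term, whose $L^2$-norm does not a priori shrink with $n$ and depends on $x$; its treatment via the identity above—crucially using $p_n \in \alpha_n(P)'$—and the transport into $v_nMv_n^*$ constitutes the main technical input beyond the standard spectral gap framework.
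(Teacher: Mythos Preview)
Your treatment of the term $\alpha_n([y,x])p_n$ contains a genuine gap. You write $\alpha_n(z)p_n = v_nzv_n^*p_n + (\alpha_n(z)-v_nzv_n^*)p_n$ and claim the error is of order $\|(\alpha_n(z)-z)p_n\|_2$, but the hypothesis $\alpha_n(P)p_n\subset v_nMv_n^*$ gives no relation whatsoever between $v_nzv_n^*$ and either $z$ or $\alpha_n(z)$; it only says that $\beta_n(z):=v_n^*\alpha_n(z)p_nv_n$ lies in $M$. One then has $\alpha_n([y,x])p_n = v_n[\beta_n(y),\beta_n(x)]v_n^*$ with $[\beta_n(y),\beta_n(x)]\in M$, but since $v_nMv_n^*\neq M$ this neither makes $(1-E_M)(\alpha_n([y,x])p_n)$ small nor produces $[y,x]p_n$ as a leading piece. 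In effect the problematic term is, up to controlled errors, $\|(1-E_M)(\alpha_n(w)p_n)\|_2$ for $w=[y,x]\in P$, which is exactly the kind of quantity you are trying to bound. Your argument therefore yields only an inequality of the shape
\[
\sup_{x\in(P)_1}\|\eta_n^x\|_2 \;\le\; (\text{controlled}) \;+\; 2\kappa\Bigl(\sum_{y\in S}\|y\|\Bigr)\,\sup_{z\in(P)_1}\|\eta_n^z\|_2,
\]
which is useless unless the coefficient on the right happens to be $<1$. The same circularity obstructs your contradiction argument for part (1): for each fixed $y\in P$ the term $(1-E_M)(\alpha_n([y,x_n])p_n)$ has no reason to tend to zero.

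The paper sidesteps this entirely by working not with the $x$-dependent vectors $\eta_n^x$ but with the single $x$-independent vector $\xi_n := p_nv_n - E_M(p_nv_n)\in\mathcal H$, together with a \emph{twisted} $M$-$P$ bimodule structure on $\mathcal H$ in which the right action of $x\in P$ is $\xi\mapsto\xi\,\beta_n(x)$. The key is the clean identity $\alpha_n(x)p_nv_n = p_nv_n\,\beta_n(x)$ (immediate from $p_n\in\alpha_n(P)'$), which gives
\[
\|x\cdot\xi_n-\xi_n\cdot x\|_2 = \|x\xi_n-\xi_n\beta_n(x)\|_2 \le \|(x-\alpha_n(x))p_n\|_2
\]
with no commutator $[y,x]$ ever appearing, and separately the elementary estimate $\|(\alpha_n(x)-E_M(\alpha_n(x)))p_n\|_2\le 3\|\xi_n\|_2$ for all $x\in(P)_1$. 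Lemma \ref{gap} applied to this twisted bimodule then gives both parts directly.
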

\begin{proof}
For $x\in P$ and $n\geqslant 1$, define $\beta_n(x)=v_{n}^*\alpha_n(x)p_nv_n\in M$. Denote $\mathcal H=L^2(\tilde M)\ominus L^2(M)$. Let $\mathcal H_n$ be the $M$-$P$ bimodule which is equal to $\mathcal H$ endowed with the bimodule structure given by $y\cdot\xi\cdot x=y\xi\beta_n(x)$.
Then  $\mathcal H_n$ is isomorphic to $L^2(M)\otimes_{Q}\mathcal K_n$, where $\mathcal K_n$ is equal to $\mathcal K$ endowed with the $Q$-$P$ bimodule structure given by $y\cdot\xi\cdot x=y\xi\beta_n(x)$.

Define the $Q$-$P$ bimodule $\tilde{\mathcal K}=\oplus_{n\geqslant 1}\mathcal K_n$ and let us treat separately the two assertions.

(1) In this case, Lemma \ref{gap} (1) implies that any net $\xi_n\in L^2(M)\otimes_Q\tilde{\mathcal K}$ satisfying $\|x\cdot\xi_n\|\leqslant \|x\|_2$ for every $x\in M$, and $\|x\cdot \xi_n-\xi_n\cdot x\|\rightarrow 0$, for all $x\in P$, must verify $\|\xi_n\|\rightarrow 0$.
Define $\xi_n=p_nv_n-E_M(p_nv_n)$, then $\|x\xi_n\|_2\leqslant\|x\|_2$, for all $x\in M$. 
If  we view $\xi_n$ as an element of $\mathcal H_n$, then for every $x\in P$ we have that $$\|x\cdot\xi_n-\xi_n\cdot x\|=\|x\xi_n-\xi_n\beta_n(x)\|_2\leqslant \|xp_nv_n-p_nv_n\beta_n(x)\|_2=$$ $$\|(x-\alpha_n(x))p_nv_n\|_2\rightarrow 0.$$

Since $\mathcal H_n$ is isomorphic to a $M$-$P$ sub-bimodule of $L^2(M)\otimes_Q\tilde{\mathcal K}$, we conclude that $\|\xi_n\|_2\rightarrow 0$. It follows that for all $x\in (P)_1$ and $n\geqslant 1$ we have  $$\|\alpha_n(x)p_n-E_M(\alpha_n(x))p_n\|_2=\|\alpha_n(x)p_nv_n-E_M(\alpha_n(x))p_nv_n\|_2\leqslant $$ $$2\|\xi_n\|_2+\|(1-E_M)(\alpha_n(x)p_nv_n)\|_2=2\|\xi_n\|_2+\|(1-E_M)(p_nv_n\beta_n(x))\|_2\leqslant $$ $$2\|\xi_n\|_2+\|(1-E_M)(p_nv_n)\|_2= 3\|\xi_n\|_2.$$

Since $\|\xi_n\|_2\rightarrow 0$ and $x\in (P)_1$ is arbitrary, this proves the first assertion.

(2) Assume that
 $S$ is a non-amenability set for $P$ relative to $B$.  Lemma \ref{gap} (2) implies that
we can find  $\kappa>0$ such that any vector $\xi\in L^2(M)\otimes_B\tilde{\mathcal K}$ verifies $\|\xi\|\leqslant \kappa\sum_{y\in S}\|y\cdot\xi-\xi\cdot y\|$. Thus, for all $n\geqslant 1$ and $\xi\in L^2(\tilde M)\ominus L^2(M)$, we have that 
$ \|\xi\|_2\leqslant \kappa\sum_{y\in S}\|y\xi-\xi\beta_n(y)\|_2.$

Denote $\delta_n=\sum_{y\in S}\|(\alpha_n(y)-y)p_n\|_2$. Then we have  $\sum_{y\in S}\|yp_nv_n-p_nv_n\beta_n(y)\|_2=\delta_n$ and thus $$\sum_{y\in S}\|y(p_nv_n-E_M(p_nv_n))-(p_nv_n-E_M(p_nv_n))\beta_n(y)\|_2\leqslant\delta_n.$$
By combining the last two inequalities we conclude that $\|p_nv_n-E_M(p_nv_n)\|_2\leqslant \kappa\delta_n$, for all $n\geqslant 1$.
Together with the estimate from the proof of part (1), we get that 
if $x\in (P)_1$ and $n\geqslant 1$, then  $\|(\alpha_n-E_M(\alpha_n(x)))p_n\|_2\leqslant 3\|p_nv_n-E_M(p_nv_n)\|_2\leqslant 3\kappa\delta_n.$
Thus, the second assertion holds for $C=3\kappa$.
\end{proof}

\subsection{Property Gamma} A II$_1$ factor $M$ has {\it property Gamma}  of Murray and von Neumann \cite{MvN43} if there exists a sequence of unitaries $u_n\in M$ with $\tau(u_n)=0$ such that $\|u_nx-xu_n\|_2\rightarrow 0$, for all $x\in M$. If $\omega$ is a free ultrafilter on $\mathbb N$, then property Gamma is equivalent to $M'\cap M^{\omega}=\mathbb C1$. 
By a well-known result of Connes \cite[Theorem 2.1]{Co76} property Gamma is also equivalent to 
 the existence of a net of unit vectors $\xi_n\in L^2(M)\ominus\mathbb C1$ satisfying $\|x\xi_n-\xi_n x\|_2\rightarrow 0$, for all $x\in M$.

Therefore, the failure of property Gamma implies the existence of a non-Gamma set in the sense of the following definition that was introduced in  \cite[Definiton 3.1]{Pe04} and was also motivated by \cite[Remark 4.1.6]{Po86}.

\begin{definition}\cite{Pe04} Let $M$ be a II$_1$ factor. A finite set $S\subset M$ is called a {\it non-Gamma set for $M$} if there exists $K>0$ such that $\|\xi\|_2\leqslant \sum_{y\in S}K\|y\xi-\xi y\|_2$, for all $\xi\in L^2(M)\ominus\mathbb C1$.

\end{definition}
\begin{remark}
By \cite[Theorem 2.1]{Co76} any II$_1$ factor $M$ without property Gamma has a non-Gamma set. Note, however, that it is not always possible to find a non-Gamma set for $M$ inside a given weakly dense $*$-subalgebra of $M$. Recall that a countable group $\Gamma$ is {\it inner amenable} if the unitary representation of $\Gamma$ on $\ell^2(\Gamma\setminus\{e\})$ given by conjugation has almost invariant vectors. Vaes recently found an example of an icc group $\Gamma$ which is inner amenable (hence $\mathbb C\Gamma$ does not contain a non-Gamma set for $L(\Gamma$)) such that $L(\Gamma)$ does not have property Gamma \cite{Va09}. \end{remark}

 The next result follows easily from \cite{Co76} but for the reader's convenience we include a proof.

\begin{lemma}\cite{Co76}\label{non-amen}
Let $M$ be a II$_1$ factor and $S\subset M$ be a finite set closed under adjoint. 

If $S$ is a non-Gamma set for $M$, then $S$ is non-amenability set for $M$.
\end{lemma}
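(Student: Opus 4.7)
The plan is to prove the contrapositive: assuming that $S$ fails to be a non-amenability set, one obtains unit vectors $\xi_n\in L^2(M)\bar{\otimes}L^2(M)$ with $\|y\xi_n-\xi_n y\|_2\to 0$ for every $y\in S$, and I will produce from these a sequence of unit vectors in $L^2(M)\ominus\mathbb C 1$ whose commutators with $S$ tend to zero, contradicting the non-Gamma inequality.

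The construction follows the standard Connes reduction. I would first view the $\xi_n$ as Hilbert--Schmidt operators, form $T_n:=\xi_n\xi_n^*$ (positive trace-class, trace $1$), and push down via $\tau(x\hat T_n):=\mathrm{Tr}(xT_n)$ to obtain $\hat T_n\in L^1_+(M)$. A short commutator computation using $S=S^*$ (and the identity $yT_n-T_ny=(y\xi_n-\xi_ny)\xi_n^*+\xi_n(y\xi_n^*-\xi_n^*y)$) gives $\|y\hat T_n-\hat T_ny\|_1\to 0$ for $y\in S$, and operator-norm approximation inside $C^*(S)$ extends this to all $y\in C^*(S)$. Powers--St\o rmer then passes to the square root $\eta_n:=\hat T_n^{1/2}\in L^2_+(M)$: for unitaries $u\in C^*(S)$ one has $\|u\eta_n-\eta_nu\|_2^2\leqslant\|u\hat T_n-\hat T_n u\|_1\to 0$, and since $S$ lies in the bounded linear span of $\mathcal U(C^*(S))$, this propagates to $\|y\eta_n-\eta_n y\|_2\to 0$ for every $y\in S$.

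Setting $\tilde\eta_n:=\eta_n-\tau(\eta_n)\cdot 1\in L^2(M)\ominus\mathbb C 1$ and using $[y,\tilde\eta_n]=[y,\eta_n]$, the non-Gamma hypothesis yields $\|\tilde\eta_n\|_2\to 0$; combined with $\|\eta_n\|_2^2=\tau(\eta_n)^2+\|\tilde\eta_n\|_2^2=1$ this forces $\tau(\eta_n)\to 1$, so that $\eta_n\to 1$ in $L^2(M)$ and $\hat T_n\to 1$ in $L^1(M)$. Equivalently, the vector states $\omega_n(x)=\langle x\xi_n,\xi_n\rangle$ converge pointwise to $\tau$, and a symmetric treatment of $\xi_n^*\xi_n$ does the same for the right vector states. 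From this approximate tracialness plus the almost $S$-centrality of $\xi_n$, a Day--Mazur convexity argument (upgrading pointwise to norm convergence in $M_*$) together with Powers--St\o rmer in the coarse bimodule promotes the sequence, after passage to convex combinations in an amplification, to one that is both approximately tracial and almost $\mathcal U(M)$-central. By Connes' theorem \cite{Co76}, $M$ is then hyperfinite, and in particular has property Gamma; but any Gamma-central unit sequence in $L^2(M)\ominus\mathbb C 1$ immediately violates the non-Gamma inequality for $S$, the desired contradiction. The main obstacle is this final convexity-plus-Powers--St\o rmer upgrade to full $\mathcal U(M)$-centrality, which is where the substantive input from \cite{Co76} enters; the earlier reductions are routine pushdown and Powers--St\o rmer manipulations.
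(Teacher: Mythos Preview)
Your argument tracks the paper's proof closely through the reduction $\hat T_n\to 1$ in $L^1(M)$ (equivalently, $\psi|_M=\tau$ for the limit state $\psi$ on $\mathbb B(L^2(M))$). The gap is in your final step: from approximate traciality together with almost $S$-centrality of the $\xi_n$ you cannot upgrade to almost $\mathcal U(M)$-centrality, only to almost $\mathcal U(W^*(S))$-centrality. The convexity/Powers--St\o rmer/Kaplansky mechanism you invoke passes from $C^*(S)$-centrality to $W^*(S)$-centrality once the state restricts to $\tau$ on $M$, but there is no way to reach unitaries outside $W^*(S)$; nothing in the hypotheses forces $W^*(S)=M$. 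So the conclusion ``$M$ is hyperfinite'' is not available.

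The paper closes exactly this gap by drawing the weaker (but sufficient) conclusion: the state $\psi$ is $W^*(S)$-central, hence $W^*(S)$ is amenable. One then observes that $W^*(S)$ is itself a II$_1$ factor without property Gamma---indeed the non-Gamma inequality for $S$ in $L^2(M)\ominus\mathbb C1$ forces $W^*(S)'\cap M=\mathbb C1$ (so $W^*(S)$ is a factor, necessarily infinite-dimensional) and kills any Gamma-sequence for $W^*(S)$, since such a sequence sits in $L^2(M)\ominus\mathbb C1$ and asymptotically commutes with $S$. Amenability of $W^*(S)$ then contradicts this. Your final sentence (``any Gamma-central unit sequence in $L^2(M)\ominus\mathbb C1$ violates the non-Gamma inequality'') is precisely the right endgame, but it should be applied to $W^*(S)$, not to $M$.
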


\begin{proof}   Let $S$ be a non-Gamma set for $M$. Assume by contradiction that $S$ is not a non-amenability set. Thus we can find a sequence of unit vectors $\xi_n\in L^2(M)\bar{\otimes} L^2(M)$ such that $\|x\xi_n-\xi_n x\|_2\rightarrow 0$, for all $x\in S$. Choose a state on $\ell^{\infty}(\mathbb N)$, denoted $\lim_n$,  extending the usual limit. Define $\psi:\mathbb B(L^2(M))\rightarrow\mathbb C$ by letting $\psi(T)=\lim_n\langle (T\otimes 1)\xi_n,\xi_n\rangle$. 

Then $\psi$ is an $S$-central state, hence $\phi=\psi_{|M}:M\rightarrow\mathbb C$ is an  $S$-central state.  Moreover, $\phi$ is central under the $C^*$-algebra $C^*(S)$ generated by $S$.  Let $\eta_i\in L^1(M)$ be a net of positive norm one elements such that 
$\tau(x\eta_i)\rightarrow\phi(x)$, for all $x\in M$. Since $\phi$ is $C^*(S)$-central, for all $u\in \mathcal U(C^*(S))$ we have that $\tau(x(u\eta_i u^*-\eta_i))\rightarrow 0$, for all $x\in M$. Thus, $u\eta_i u^*-\eta_i\rightarrow 0$, in the weak topology, for all $u\in \mathcal U(C^*(S))$. The Hahn-Banach theorem implies that, after passing to convex combinations, we may assume that we have $\|u\eta_iu^*-\eta_i\|_1\rightarrow 0$ in addition to $\tau(x\eta_i)\rightarrow\phi(x)$, for every $x\in M$.

The Powers-St$\o$rmer inequality (see \cite[Proposition 6.2.4]{BO08}) gives that $\|u\eta_i^{1/2}u^*-\eta_i^{1/2}\|_2\rightarrow 0$, for all $u\in \mathcal U(C^*(S))$. Hence $\|y\eta_i^{1/2}-\eta_i^{1/2}y\|_2\rightarrow 0$, for all $y\in S$.
Since $S$ is a non-Gamma set, we derive that $\|\eta_i^{1/2}-c_i\cdot 1\|_2\rightarrow 0$, where $c_i=\langle\eta_i^{1/2},1\rangle$. Applying Powers-St$\o$rmer again yields that $\|\eta_i-c_i^2\cdot 1\|_1\rightarrow 0$. This implies that $c_i^2\tau(x)\rightarrow\phi(x)$, for all $x\in M$, hence $\phi=\tau$. 
Since $\psi$ is $C^*(S)$-central and $\psi_{|M}=\tau$, we get that $\psi$ is central under the von Neumann algebra $W^*(S)$  generated by $S$. Thus $W^*(S)$ is amenable, contradicting the fact that it is a II$_1$ factor without property Gamma.
\end{proof} 

\subsection{Mixing bimodules} Next, we recall the notion of mixing bimodules introduced in \cite[Definition 2.3]{PS09}.
 
 \begin{definition}\label{mixing}\cite{PS09} Let $(M,\tau)$ be a tracial von Neumann algebra. We say that an $M$-$M$ bimodule $\mathcal H$ is {\it mixing} if 
for any sequence $a_n\in (M)_1$ such that $a_n\rightarrow 0$, weakly, we have  $$\sup_{x\in (M)_1}|\langle a_n\xi x,\eta\rangle|\rightarrow 0,\\\;\;\text{and}\;\;\sup_{x\in (M)_1}|\langle x\xi a_n,\eta\rangle|\rightarrow 0,\;\;\text{as $n\rightarrow\infty$},\;\;\text{for all}\;\;\; \xi,\eta\in\mathcal H.$$
\end{definition}
 
 The coarse $M$-$M$ bimodule $L^2(M)\bar{\otimes} L^2(M)$ is clearly mixing.
 Also, let $\Gamma$ be a countable group and $\pi:\Gamma\rightarrow\mathcal U(\mathcal K)$ be a mixing unitary representation. Then it is easy to see that $\mathcal H=\mathcal K\otimes\ell^2(\Gamma)$ is a mixing $L(\Gamma)$-$L(\Gamma)$ bimodule (with its natural bimodule structure, defined as in \cite[1.1.4.]{Po01}).

\subsection {Normalizers of subalgebras of amalgamated free product von Neumann algebras}
We will also need the following variant of \cite[Theorem 1.6]{Io12a} which is a hybrid between Theorems 1.6 and 5.2 from \cite{Io12a}.

\begin{theorem}\label{general}\cite{Io12a} Let $(M_1,\tau_1)$ and $(M_2,\tau_2)$ be two tracial von Neumann algebras with a common von Neumann subalgebra $B$ such that ${\tau_1}_{|B}={\tau_2}_{|B}$ and denote $M=M_1*_{B}M_2$. Let $(Q,\tau)$ be a tracial von Neumann algebra and $A\subset M\bar{\otimes}Q$ be an amenable von Neumann subalgebra. Denote by $P=\mathcal N_{M\bar{\otimes}Q}(A)''$ the von Neumann algebra generated by the normalizer of $A$ in $M\bar{\otimes}Q$. 

Assume that there are a group $\mathcal U$ and homomorphisms $\rho_1:\mathcal U\rightarrow\mathcal U(M),\rho_2:\mathcal U\rightarrow\mathcal U(Q)$ such that
\begin{itemize}
\item $\rho_1(u)\otimes \rho_2(u)\in P$, for all $u\in\mathcal U$, and 
\item the von Neumann subalgebra $P_0\subset M$ generated by $\rho_1(\mathcal U)$ satisfies $P_0'\cap M^{\omega}=\mathbb C1$.
\end{itemize}
Then one of the following conditions holds true:

\begin{enumerate}
\item $A\prec_{M\bar{\otimes}Q}B\bar{\otimes}Q$.
\item $P_0\prec_{M}M_i$, for some $i\in\{1,2\}$.
\item$P_0$ is amenable relative to $B$ inside $M$.
\end{enumerate}
\end{theorem}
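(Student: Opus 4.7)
I would argue by contradiction, assuming that all three conclusions (1), (2), (3) fail simultaneously, and derive a contradiction from Popa's free s-malleable deformation of the amalgamated free product $M=M_1*_BM_2$ together with a spectral gap argument driven by the diagonal embedding $u\mapsto\rho_1(u)\otimes\rho_2(u)$. Concretely, I would use the IPP-style deformation $(\tilde M,\theta_t,\beta)$, tensor with $Q$ to get $\hat M:=\tilde M\bar\otimes Q$ and $\hat\theta_t:=\theta_t\otimes\mathrm{id}_Q$, and exploit the bimodule decomposition $L^2(\hat M)\ominus L^2(M\bar\otimes Q)\cong(L^2(\tilde M)\ominus L^2(M))\bar\otimes L^2(Q)$, where $L^2(\tilde M)\ominus L^2(M)$ is, as an $M$-$M$ bimodule, a direct sum of sub-bimodules of $L^2(M)\otimes_B\mathcal K\otimes_B L^2(M)$ for $B$-$B$ bimodules $\mathcal K$ without $B$-invariant vectors.

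\textbf{Dichotomy via transversality.} Popa's transversality inequality gives, for $u\in\mathcal U(M\bar\otimes Q)$,
\[
\|\hat\theta_{2t}(u)-u\|_2\le 2\,\|\hat\theta_t(u)-E_{M\bar\otimes Q}(\hat\theta_t(u))\|_2.
\]
If $\hat\theta_t\to\mathrm{id}$ uniformly on $\mathcal U(A)$ as $t\to 0$, then the standard Popa--IPP intertwining argument applied to the AFP deformation, using that the fixed point algebra of $\hat\theta_t$ is contained in $B\bar\otimes Q$ together with the relative-mixing property, gives a corner intertwining $A\prec_{M\bar\otimes Q}B\bar\otimes Q$, i.e.\ conclusion (1). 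Hence I may assume this fails and extract $u_n\in\mathcal U(A)$, $t_n\downarrow 0$ with $\inf_n\|\hat\theta_{t_n}(u_n)-E_{M\bar\otimes Q}(\hat\theta_{t_n}(u_n))\|_2>0$.

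\textbf{Spectral gap.} Set $\xi_n=\hat\theta_{t_n}(u_n)-E_{M\bar\otimes Q}(\hat\theta_{t_n}(u_n))\in L^2(\hat M)\ominus L^2(M\bar\otimes Q)$, which has $\|\xi_n\|_2\ge c>0$. For any $v\in\mathcal N_{M\bar\otimes Q}(A)$, the fact that $vu_nv^*\in A$ together with standard manipulations bounds $\|v\xi_n-\xi_n v\|_2$ by a constant multiple of $\|\hat\theta_{t_n}(v)-v\|_2$. Specializing to $v=\rho_1(u)\otimes\rho_2(u)\in P$ and using that $\hat\theta_{t_n}$ is trivial on the $Q$-factor, this commutator is controlled by $\|\theta_{t_n}(\rho_1(u))-\rho_1(u)\|_2$. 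The crux is now to apply Lemma \ref{gap}(1) (in the form used in the proof of Lemma \ref{uniform}): restricting the left $M\bar\otimes Q$-action on $(L^2(\tilde M)\ominus L^2(M))\bar\otimes L^2(Q)$ through the diagonal $\rho_1\otimes\rho_2$ produces a $P_0$-bimodule that sits inside $L^2(M)\otimes_B\mathcal H$ for a $B$-$M$ bimodule $\mathcal H$; since $P_0$ is not amenable relative to $B$ in $M$, the only way $\xi_n$ can survive almost-$P_0$-central vectors is that $\sup_{u\in\mathcal U}\|\theta_{t_n}(\rho_1(u))-\rho_1(u)\|_2\to 0$.

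\textbf{Conclusion and main obstacle.} Once $\theta_{t_n}$ converges uniformly on $\rho_1(\mathcal U)$, Kaplansky density extends this to $(P_0)_1$, so the AFP deformation $(\theta_t,\beta)$ converges uniformly on the unit ball of $P_0\subset M$. Popa's standard intertwining-with-deformation criterion for free products, together with the weak-mixing hypothesis $P_0'\cap M^\omega=\mathbb C 1$ (which rules out inner-type obstructions exactly as in the proof of \cite[Thm.~1.6]{Io12a}), then yields either $P_0\prec_M M_i$ for some $i$ (conclusion (2)) or $P_0$ amenable relative to $B$ inside $M$ (conclusion (3)), completing the contradiction. The main obstacle is precisely the spectral-gap transfer: the non-amenability hypothesis lives in $M$ while the deformation lives in $M\bar\otimes Q$, and one has to carefully track how the diagonal action $\rho_1\otimes\rho_2$ turns the tensored bimodule into one on which the $M$-level non-amenability of $P_0$ can be invoked.
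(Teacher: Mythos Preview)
Your approach diverges substantially from the paper's and contains a genuine gap. The paper does \emph{not} run a direct transversality/spectral-gap dichotomy on $\mathcal U(A)$. Instead it rewrites $\mathcal M=M\bar\otimes Q$ as $\mathcal M_1*_{\mathcal B}\mathcal M_2$ with $\mathcal B=B\bar\otimes Q$, forms $\tilde{\mathcal M}=\mathcal M*_{\mathcal B}(\mathcal B\bar\otimes L(\mathbb F_2))=N\rtimes\mathbb F_2$, and applies the Popa--Vaes dichotomy \cite[Theorem~1.6]{PV11} to the \emph{amenable} subalgebra $\theta_t(A)$: either $\theta_t(A)\prec_{\tilde{\mathcal M}}N$ or $\theta_t(P)$ is amenable relative to $N$. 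Case~1 is then fed into \cite[Theorem~3.2]{Io12a}, yielding (1) \emph{or} $P\prec_{\mathcal M}\mathcal M_i$ (which is shown to force $P_0\prec_M M_i$, i.e.\ (2)). Case~2 is fed into \cite[Theorem~5.2]{Io12a}, which uses the hypothesis $P_0'\cap M^\omega=\mathbb C1$ and yields (2) or (3). So both the amenability of $A$ and the full non-Gamma hypothesis on $P_0$ are consumed by these black boxes; your outline never really uses either.

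The specific error is in your ``Dichotomy via transversality'' step. Uniform convergence of $\hat\theta_t$ on $\mathcal U(A)$ does \emph{not} give $A\prec_{M\bar\otimes Q}B\bar\otimes Q$. For a concrete obstruction, take $B=\mathbb C$, $Q=\mathbb C$, $M=M_1*M_2$, and the deformation of Remark~\ref{madef}(1): $\alpha_t(x)=u_txu_t^*$ on $M_1$, $\alpha_t=\mathrm{id}$ on $M_2$. A direct freeness computation gives $\|\alpha_t(x)-x\|_2^2=2(1-|\tau(u_t)|^2)(1-|\tau(x)|^2)$ for $x\in\mathcal U(M_1)$, so $\alpha_t\to\mathrm{id}$ \emph{uniformly} on $\mathcal U(M_1)$; yet any diffuse $A\subset M_1$ satisfies $A\nprec_M\mathbb C$. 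This is exactly why the paper's Case~1 produces the alternative $P\prec\mathcal M_i$ rather than just (1). Your argument therefore cannot account for conclusion (2) correctly: in the example above (say $M_1=L(\Gamma_1)$ with $\Gamma_1$ non-amenable icc, $A$ a diffuse amenable subalgebra normalized by $L(\Gamma_1)$, $P_0=L(\Gamma_1)$), conclusions (1) and (3) both fail while (2) holds trivially, and your scheme has no branch that lands there. The subsequent ``Spectral gap'' and ``Conclusion'' paragraphs inherit this problem; in particular, the sentence claiming that non-relative-amenability of $P_0$ forces $\sup_u\|\theta_{t_n}(\rho_1(u))-\rho_1(u)\|_2\to 0$ has the implication reversed (that quantity already tends to $0$ pointwise since $t_n\to 0$), and ``uniform convergence on $(P_0)_1$'' is not what spectral gap produces here.
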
 

\begin{proof} For completeness, let us briefly indicate how the result follows from \cite{Io12a}.

Define $\mathcal M=M\bar{\otimes}Q$, $\mathcal M_1=M_1\bar{\otimes}Q$,  $\mathcal M_2=M_2\bar{\otimes}Q$ and $\mathcal B=B\bar{\otimes}Q$. Then $\mathcal M=\mathcal M_1*_{\mathcal B}\mathcal M_2$. 
Further, we define $\tilde{\mathcal M}={\mathcal M}*_{\mathcal  B}(\mathcal B\bar{\otimes}L(\mathbb F_2))$
 and let $\{\theta_t\}_{t\in\mathbb R}\subset\text{Aut}(\tilde{\mathcal M})$ be the free malleable deformation \cite{IPP05} (see e.g. \cite[Section 2.5]{Io12a}).  Let $\{u_g\}_{g\in\mathbb F_2}$ denote the canonical unitaries and define $N\subset\tilde{\mathcal M}$ to be the von Neumann subalgebra generated by $\cup_{g\in\mathbb F_2}u_g\mathcal Mu_g^*$. Then $N$ is normalized by $\{u_g\}_{g\in\mathbb F_2}$ and $\tilde{\mathcal M}=N\rtimes\mathbb F_2$.

Now,  notice that if $t\in (0,1)$ then $\theta_t(P)\subset\mathcal N_{\tilde{\mathcal M}}(\theta_t(A))$.   S. Popa and S. Vaes' dichotomy \cite[Theorem 1.6]{PV11}  implies that either $\theta_t(A)\prec_{\tilde{\mathcal M}} N$ or $\theta_t(P)$ is amenable relative to $N$. Thus, we conclude that  we are in one of the following two cases:

{\bf Case 1}. $\theta_t(A)\prec_{\tilde{\mathcal M}} N$, for some $t\in (0,1)$.

{\bf Case 2}. $\theta_t(P)$ is amenable relative to $N$, for all $t\in (0,1)$.

In the first case, \cite[Theorem 3.2]{Io12a} implies that either $A\prec_{\mathcal M}\mathcal B$ or $P\prec_{\mathcal M}\mathcal M_i$, for some $i\in\{1,2\}$.
If the first alternative holds, then (1) is true. If $P\prec_{\mathcal M}\mathcal M_i$, then $P_0\prec_{M}M_i$ and hence (2) is true. Indeed, if $P_0\nprec_{M}M_i$, then  by the proof of \cite[Corollary 2.3]{Po03} we can find a sequence of unitaries $u_n\in\mathcal U$ such that $\|E_{M_i}(a\rho_1(u_n)b)\|_2\rightarrow 0$, for all $a,b\in M$. But then it is clear that $\|E_{\mathcal M_i}(a(\rho_1(u_n)\otimes\rho_2(u_n))b)\|_2\rightarrow 0$, for all $a,b\in \mathcal M$. This contradicts the assumption that $P\prec_{\mathcal M}\mathcal M_i$.

In the second case, \cite[Theorem 5.2]{Io12a} directly implies that either (2) or (3) hold.
\end{proof}

\section{Derivations and free dilations} 
In this section we record several results about derivations and their dilations.

Let $(M,\tau)$ be a tracial von Neumann algebra, $M_0\subset M$  a weakly dense $*$-subalgebra, and $\mathcal H$ a $M$-$M$ bimodule. A map $\delta:M_0\rightarrow \mathcal H$ is a {\it derivation} if  $\delta(xy)=x\delta(y)+\delta(x)y$, for all $x,y\in M_0$. We assume  that $\delta$ is {\it closable}  as an unbounded operator $\delta:L^2(M)\rightarrow \mathcal H$. We also suppose that $\delta$ is {\it real}, i.e. there exists a conjugate-linear isometric involution $\mathcal J$ on $\mathcal H$ satisfying $\mathcal J(x\delta(y)z)=z^*\delta(y^*)x^*$, for all $x,y,z\in M_0$. When $\mathcal H=L^2(\mathcal M)$, for some semi-finite von Neumann algebra $\mathcal M$ containing $M$, we assume that $\mathcal J$ is given by $\mathcal J(x)=x^*$. In this case, $\delta$ is real if and only if $\delta(x^*)=\delta(x)^*$, for all $x\in M$.

Now, denote by $\bar{\delta}$ the closure of $\delta$ and by $D(\bar{\delta})\subset L^2(M)$ its domain.
By \cite{Sa89} and \cite{DL92}, $D(\bar{\delta})\cap M$ is a $*$-subalgebra and $\bar{\delta}_{|D(\bar{\delta})\cap M}$ is a derivation. Further,  
$\Delta=\delta^*\bar{\delta}$  gives rise to a semigroup of completely positive maps on $M$. More precisely, $\phi_t=\exp(-t\Delta):M\rightarrow M$ are unital, trace preserving, completely positive maps satisfying $\phi_t\circ\phi_s=\phi_{t+s}$, for all $t,s>0$, and $\|\phi_t(x)-x\|_2\rightarrow 0$, as $t\rightarrow 0$, for every $x\in M$. Additionally, since $\delta$ is real, we have that $\phi_t$ is symmetric for every $t>0$: $\tau(\phi_t(x)y)=\tau(x\phi_t(y))$, for all $x,y\in M$.

 Recently it was proved that the semigroup $\{\phi_t\}_{t>0}$ admits a dilation in a larger tracial von Neumann algebra $\tilde M\supset M$ (see \cite[Theorem 24]{Da10a}). Here we state this result in the case  when $\mathcal H$ is a multiple of the coarse $M$-$M$ bimodule.  In this case, \cite[Proposition 26]{Da10a} provides additional information on certain $M$-$M$ sub-bimodules of $L^2(\tilde M)$.

\begin{theorem}\cite{Da10a}\label{dilation}
Let $(M,\tau)$ be a tracial von Neumann algebra and $M_0\subset M$ be a weakly dense $*$-subalgebra. Let $\delta:M_0\rightarrow (L^2(M)\bar{\otimes}L^2(M))^{{\oplus\infty}}$ be a real closable derivation. 
Let $\Delta=\delta^*\bar{\delta}$ and consider the semigroup of completely positive maps $\phi_t=\exp(-t\Delta):M\rightarrow M$.

Then there exists a tracial von Neumann algebra $\tilde M$ which contains $M$ and $*$-homomorphisms $\alpha_t:M\rightarrow\tilde M$ such that $\phi_t=E_M\circ\alpha_t$, for all $t>0$. 
Moreover,  denote by  $M_t\subset\tilde M$ the von Neumann subalgebra generated by $M$ and $\alpha_t(M)$. Then  the $M$-$M$ bimodule $L^2(M_t)\ominus L^2(M)$ is isomorphic to a sub-bimodule of  $(L^2(M)\bar{\otimes} L^2(M))^{\oplus\infty}$, for every $t>0$.
\end{theorem}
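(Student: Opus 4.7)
The strategy is to construct $\tilde M$ as a free product of $M$ with a free Gaussian algebra modelled on the target of $\delta$, and to produce $\alpha_t$ via a free stochastic integral with respect to a free Brownian motion. This is the approach pioneered by Voiculescu and refined in the free probability / $q$-Gaussian literature.

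\textbf{Step 1 (Building $\tilde M$).} Fix an isomorphism of $M$-$M$ bimodules
\[
\mathcal H := (L^2(M)\bar\otimes L^2(M))^{\oplus\infty} \;\cong\; L^2(M)\otimes_{\mathbb C} K\otimes_{\mathbb C} L^2(M),
\]
where $K$ is a separable real Hilbert space; the reality involution $\mathcal J$ corresponds to $\xi\otimes k\otimes \eta\mapsto \eta^{*}\otimes k\otimes \xi^{*}$. Let $(\Gamma(K),\tau_{K})$ be the free Gaussian von Neumann algebra, with semicircular generators $\{s(k):k\in K\}$ and associated free Brownian motion $\{S_t(k)\}_{t\geqslant 0}$ obtained by tensoring $K$ with $L^2(\mathbb R_+,dt)$. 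Define
\[
\tilde M \;=\; M \,*\, \Gamma(K\otimes L^2(\mathbb R_+)),
\]
the tracial free product. By construction $M\subset\tilde M$ trace-preservingly, and the orthocomplement of $L^2(M)$ in $L^2(\tilde M)$ decomposes, via reduced words, as a countable direct sum of sub-bimodules of the form
\[
L^2(M)\otimes_{\mathbb C} K^{\otimes n}\otimes_{\mathbb C} L^2(M),
\]
each of which embeds in $(L^2(M)\bar\otimes L^2(M))^{\oplus\infty}$.

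\textbf{Step 2 (Defining $\alpha_t$ via a free stochastic flow).} For $x\in M_0$, write $\delta(x)\in\mathcal H$ as $\sum_j \xi_j\otimes k_j\otimes \eta_j$ and define the free stochastic integral $\int_0^t \delta(x)\,\#\,dS_s$ inside $L^2(\tilde M)$ using the free Ito isometry; iterating this construction produces a map $\alpha_t:M_0\to\tilde M$ characterised infinitesimally by the Ito-type equation
\[
\alpha_t(x) \;=\; x \;+\; \int_0^t \alpha_s\bigl(\delta(x)^{(1)}\bigr)\,dS_s\,\alpha_s\bigl(\delta(x)^{(2)}\bigr) \;-\; \tfrac12\int_0^t \alpha_s(\Delta x)\,ds.
\]
The Leibniz rule for $\delta$ and the free Ito formula show that $\alpha_t$ preserves products on $M_0$; the reality condition on $\delta$ gives $\alpha_t(x^*)=\alpha_t(x)^*$; and the $L^2$ estimates afforded by $\bar\delta$ being closable let us extend $\alpha_t$ to a normal trace-preserving $*$-homomorphism $\alpha_t:M\to\tilde M$.

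\textbf{Step 3 (Identifying the dilation).} Because odd semicircular moments vanish, $E_M$ applied to each iterated free stochastic integral of order $\geqslant 1$ in $dS$ is zero. Writing out $\alpha_t(x)$ as an iterated chaos expansion, the contribution of the $n$-th order chaos gives exactly $\tfrac{(-t)^n}{n!}\Delta^n(x)$, whence
\[
E_M(\alpha_t(x)) \;=\; \sum_{n\geqslant 0}\frac{(-t)^n}{n!}\Delta^n(x) \;=\; \phi_t(x).
\]
One justifies these manipulations first for $x\in M_0\cap D(\Delta^\infty)$ and passes to general $x\in M$ by continuity of $\alpha_t,\phi_t$ in $\|\cdot\|_2$.

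\textbf{Step 4 (Bimodule estimate for $M_t$).} Since $\alpha_t(M)\subset M\vee \Gamma(K\otimes L^2([0,t]))$ inside $\tilde M$, the von Neumann algebra $M_t$ generated by $M$ and $\alpha_t(M)$ lies in $M *\Gamma(K\otimes L^2([0,t]))$. The reduced word decomposition then realises $L^2(M_t)\ominus L^2(M)$ as a direct sum of bimodules $L^2(M)\otimes K^{\otimes n}\otimes L^2(M)$ ($n\geqslant 1$), which embeds in $(L^2(M)\bar\otimes L^2(M))^{\oplus\infty}$, as claimed.

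\textbf{Main obstacle.} The delicate point is Step 2: showing that the formally defined $\alpha_t$ is genuinely multiplicative and extends to a normal $*$-homomorphism on all of $M$, given that $\delta$ is only densely defined and unbounded. This requires controlling the iterated stochastic integrals via the free Ito isometry using only the closability of $\bar\delta$ and the positivity of $\Delta$, and verifying multiplicativity by a Gronwall-type argument comparing $\alpha_t(xy)$ with $\alpha_t(x)\alpha_t(y)$ through their common stochastic differential equation. Once this analytic core is in place, the remaining verifications are essentially algebraic.
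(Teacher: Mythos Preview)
The paper does not give its own proof of this statement: it is quoted from \cite{Da10a} (Theorem~24 and Proposition~26 there), so there is no in-paper argument to compare against. Your sketch does follow the strategy of \cite{Da10a}, namely to realise $\tilde M$ as a free product of $M$ with a free Gaussian algebra and to construct $\alpha_t$ as the flow of a free stochastic differential equation driven by free Brownian motion. In that sense the approach is correct.

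That said, two of your steps are not right as written. In Step~3 you first assert that $E_M$ of every iterated stochastic integral of order $\geqslant 1$ vanishes, and then claim that ``the $n$-th order chaos gives exactly $\tfrac{(-t)^n}{n!}\Delta^n(x)$''. These two statements contradict each other. The correct argument is not a chaos expansion: applying $E_M$ to the integral equation kills the martingale term and leaves $E_M(\alpha_t(x)) = x - \int_0^t E_M(\alpha_s(\Delta x))\,ds$, an ODE whose unique solution is $\phi_t(x)$. No power series for $e^{-t\Delta}$ is involved, and indeed that series need not converge for $x\in D(\Delta^{\infty})$.

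In Step~2 the expression $\alpha_s(\delta(x)^{(1)})\,dS_s\,\alpha_s(\delta(x)^{(2)})$ is not well defined: $\delta(x)$ lies in $(L^2(M)\bar\otimes L^2(M))^{\oplus\infty}$, not in $M\otimes M$, so its Sweedler legs are not elements of $M$ to which a $*$-homomorphism $\alpha_s$ can be applied. Likewise $\Delta x$ need not exist for $x\in M_0$, only $\bar\delta(x)$. In \cite{Da10a} this is handled by regularising with the resolvent $\eta_\alpha=\alpha(\alpha+\Delta)^{-1}$, solving the SDE for the bounded derivations $\delta\circ\eta_\alpha$, and then passing to the limit; your sketch gives no indication of this. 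You are right that proving $\alpha_t$ is a genuine $*$-homomorphism on all of $M$ is the crux, but ``a Gronwall-type argument'' undersells what is required; the actual proof occupies most of \cite{Da10a} and goes through a careful analysis of the Dirichlet form associated to $\bar\delta$.
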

 
In the sequel we will also need the following technical result. 
 
\begin{lemma}\label{estimate}
Consider the notations from Theorem \ref{dilation}. Then for every $x\in D(\bar{\delta})$ we have that $$\frac{1}{t}\|\alpha_t(x)-\phi_t(x)\|_2^2\rightarrow 2\|\delta(x)\|_2^2\;\;\;\text{and}\;\;\;\frac{1}{t}\|\alpha_t(x)-x\|_2^2\rightarrow 2\|\delta(x)\|_2^2,\;\;\;\text{as}\;\;\; t\rightarrow 0.$$
\end{lemma}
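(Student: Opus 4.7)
The plan is to reduce both limits to spectral-calculus statements for the generator $\Delta = \delta^*\bar\delta$ of the symmetric semigroup $\phi_t = \exp(-t\Delta)$, using the fact that $\alpha_t$ is trace preserving and that $E_M\circ\alpha_t = \phi_t$.

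First I would exploit that $\alpha_t$ is a trace preserving $*$-homomorphism, so $\|\alpha_t(x)\|_2 = \|x\|_2$, and that the conditional expectation identity $E_M\alpha_t = \phi_t$ gives the two inner product computations
\[
\langle \alpha_t(x), \phi_t(x)\rangle = \langle E_M(\alpha_t(x)), \phi_t(x)\rangle = \|\phi_t(x)\|_2^2,
\qquad
\langle \alpha_t(x), x\rangle = \langle \phi_t(x), x\rangle.
\]
Expanding the two squared norms then yields the clean identities
\[
\|\alpha_t(x)-\phi_t(x)\|_2^2 = \|x\|_2^2 - \|\phi_t(x)\|_2^2,
\qquad
\|\alpha_t(x)-x\|_2^2 = 2\langle x-\phi_t(x), x\rangle,
\]
so the problem is reduced entirely to analysing the behaviour of $\phi_t$ on $L^2(M)$ as $t\to 0$, and the dilation $\tilde M$ drops out of the picture.

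Next I would invoke the fact that $\Delta = \delta^*\bar\delta$ is positive self-adjoint on $L^2(M)$ with $D(\bar\delta)=D(\Delta^{1/2})$, and use the Borel functional calculus and the spectral measure $\mu_x$ of $\Delta$ at $x$. Symmetry of $\phi_t$ (which is itself a consequence of $\delta$ being real) gives $\|\phi_t(x)\|_2^2 = \langle \phi_{2t}(x), x\rangle$, hence
\[
\tfrac{1}{t}(\|x\|_2^2 - \|\phi_t(x)\|_2^2) = \int \tfrac{1-e^{-2t\lambda}}{t}\, d\mu_x(\lambda),
\qquad
\tfrac{1}{t}\langle x-\phi_t(x), x\rangle = \int \tfrac{1-e^{-t\lambda}}{t}\, d\mu_x(\lambda).
\]
For fixed $\lambda\geqslant 0$ the integrand $(1-e^{-s\lambda})/s$ decreases monotonically to $\lambda$ as $s\searrow 0$, so the monotone convergence theorem applies and produces the limits $\int 2\lambda\, d\mu_x(\lambda) = 2\|\Delta^{1/2}x\|^2$ and $\int \lambda\, d\mu_x(\lambda) = \|\Delta^{1/2}x\|^2$, respectively. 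Since $\|\Delta^{1/2}x\|^2 = \|\bar\delta(x)\|^2 = \|\delta(x)\|_2^2$, both statements of the lemma follow immediately.

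The main (and essentially only) subtlety is justifying the interchange of limit and integral at the $L^2$-level for arbitrary $x\in D(\bar\delta)$ (rather than $x\in D(\Delta)$, where one would have a direct differentiation argument). The monotonicity of $s\mapsto (1-e^{-s\lambda})/s$ that I noted above is what makes this step genuinely painless, so I do not anticipate any real obstacle beyond checking this monotonicity and assembling the identities cleanly.
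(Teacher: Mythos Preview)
Your proposal is correct and follows essentially the same route as the paper: both reduce to the identities $\|\alpha_t(x)-\phi_t(x)\|_2^2=\langle x-\phi_{2t}(x),x\rangle$ and $\|\alpha_t(x)-x\|_2^2=2\langle x-\phi_t(x),x\rangle$, and then show $\frac{1}{t}\langle x-\phi_t(x),x\rangle\to\|\Delta^{1/2}x\|_2^2$ via functional calculus for $\Delta$. The only cosmetic difference is that the paper verifies this last limit by writing $\langle x-\phi_t(x),x\rangle=\int_0^t\langle\phi_s(\Delta^{1/2}x),\Delta^{1/2}x\rangle\,ds$ and invoking strong continuity of the semigroup, whereas you use the spectral measure and monotone convergence; one small slip is that $(1-e^{-s\lambda})/s$ actually \emph{increases} to $\lambda$ as $s\searrow 0$ (not decreases), but this is harmless since monotone convergence still applies, or alternatively dominated convergence with the integrable majorant $\lambda$.
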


\begin{proof} Let $t>0$ and recall $\phi_t=\exp(-t\Delta)$, where $\Delta=\delta^*\bar{\delta}$.  
 By  combining the identity $\text{id}-\phi_t=\int_0^t\Delta\circ\phi_s\; \text{d}s$ with the fact that $x\in D(\bar{\delta})=D(\Delta^{1/2})$ we get that $$\langle x-\phi_t(x),x\rangle=\int_0^t\langle\Delta(\phi_s(x)),x\rangle\;\text{d}s=\int_0^t\langle \phi_s(\Delta^{1/2}(x)),\Delta^{1/2}(x)\rangle\;\text{d}s.$$
 
 Since $\phi_s(\Delta^{1/2}(x))\rightarrow \Delta^{1/2}(x)$, in $\|.\|_2$, as $s\rightarrow 0$, we conclude that \begin{equation}\label{conv}\frac{1}{t}\langle x-\phi_t(x),x\rangle\rightarrow \|\Delta^{1/2}(x)\|_2^2=\|\delta(x)\|_2^2,\;\;\text{as}\;\;t\rightarrow 0.\end{equation}
 
 Finally, since $\phi_t(x)=E_M(\alpha_t(x))$, we get that $\|\alpha_t(x)-\phi_t(x)\|_2^2=\|x\|_2^2-\|\phi_t(x)\|_2^2=\langle x-\phi_{2t}(x),x\rangle$. Also, we have that $\|\alpha_t(x)-x\|_2^2=2\langle x-\phi_t(x),x\rangle$. Together with equation \ref{conv} these identities yield the conclusion.
\end{proof}

In the next section, the dilations from  Theorem \ref{dilation} will be used to prove that certain II$_1$ factors $M$ are prime. On the other hand, in order to deduce that $M$ does not have Cartan subalgebras,  we will additionally need to  know that the dilation ``lives" in the free product $\tilde M=M*L(\mathbb F_{\infty})$. In the rest of this section, we recall two results in this direction.

Shlyakhtenko showed that  any ``algebraic" derivation $\delta:M_0\rightarrow L^2(M)\bar{\otimes}L^2(M)$  gives rise, via exponentiation, to a one-parameter group of automorphisms of $M*L(\mathbb Z)$ \cite[Proposition 2]{Sh07}. 
Here we note the following straightforward generalization of this result.

\begin{proposition}\label{dimagen}
Let  $(M,\tau)$ be a tracial von Neumann algebra, $B\subset M$ be a von Neumann subalgebra and $M_0\subset M$ be a weakly dense $*$-subalgebra.  Assume that $\delta:D(\delta)\rightarrow L^2(\langle M,e_B\rangle)$ is a real derivation whose domain, denoted $D(\delta)$, contains both $B$ and $M_0$ such that
\begin{itemize}
\item $\delta(M_0)\subset\;\text{span}(M_0e_BM_0)$.
\item $e_B$ is in the domain of $\delta^*$ and  $\delta^*(e_B)\in M_0$.
\item  $\delta(b)=0$, for all $b\in B$.

\end{itemize}

Assume that $M_0$ is finitely generated. More generally, assume that  $M_0=\cup_{n\geqslant 1}M_n$, where $M_n$ are finitely generated $*$-algebras satisfying  $M_n\subset M_{n+1}$ and $\delta(M_n)\subset\;\text{span}(M_ne_BM_n)$, for all $n\geqslant 1$.

Denote $\tilde M=M*_B(B\bar{\otimes}L(\mathbb Z))$ and let $s\in L(\mathbb Z)$ be a generating $(0,1)$ semicircular element. Also, let $L^2(\langle M,e_B\rangle)\ni\xi\rightarrow\xi\#s\in \overline{\text{span}(MsM)}^{\|.\|_2}\subset L^2(\tilde M)$ be the unique isomorphism of $M$-$M$ bimodules  sending $e_B$ to $s$.

Then there exists a one-parameter group of automorphisms $\{\alpha_t\}_{t\in\mathbb R}$  of  $\tilde M$ such that 

$$\|\frac{1}{t}(\alpha_t(x)-x)-\delta(x)\#s\|_2\rightarrow 0,\;\;\;\text{as}\;\; t \rightarrow 0,\;\;\;\text{for all}\;\; x\in M_0.$$

\end{proposition}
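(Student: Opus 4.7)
The strategy adapts Shlyakhtenko's construction \cite{Sh07}: build a real skew-adjoint derivation $\tilde\delta$ on a weakly dense $*$-subalgebra of $\tilde M$ and take $\{\alpha_t\}$ to be its exponential. I first treat the finitely generated case and then glue by inductive limit.

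\textbf{Step 1 (the derivation).} Let $\tilde M_0\subset\tilde M$ be the $*$-subalgebra generated by $M_0$, $B$, and $s$. Using the $M$-$M$ bimodule isometry $\xi\mapsto \xi\#s$, define $\tilde\delta:\tilde M_0\to L^2(\tilde M)$ on generators by $\tilde\delta(b)=0$ for $b\in B$, $\tilde\delta(x)=\delta(x)\#s$ for $x\in M_0$, and $\tilde\delta(s)=-\delta^*(e_B)$, and extend by the Leibniz rule. The only nontrivial consistency check is $\tilde\delta(bs)=\tilde\delta(sb)$ for $b\in B$, which is equivalent to $\delta^*(e_B)\in B'\cap M$. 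This last fact follows from $\delta|_B=0$ together with $be_B=e_Bb$: for $a\in M_0$ and $b\in B$, one has $\langle \delta^*(e_B),ab\rangle=\langle e_B,\delta(a)b\rangle=\langle e_Bb,\delta(a)\rangle=\langle e_B,b\delta(a)\rangle=\langle e_B,\delta(ba)\rangle=\langle \delta^*(e_B),ba\rangle$, and one concludes by $\|\cdot\|_2$-density of $M_0$.

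\textbf{Step 2 (reality, skew-symmetry, exponentiation).} Reality of $\tilde\delta$ follows from reality of $\delta$, $s=s^*$, and self-adjointness of $\delta^*(e_B)$ (itself a consequence of $\delta^*$ being real, since $e_B=e_B^*$). Skew-symmetry of $\tilde\delta$ as an unbounded operator on $L^2(\tilde M)$ is verified on reduced monomials in $\tilde M_0$ via the $M$-bimodularity of $\xi\mapsto\xi\#s$, the defining property $\langle\delta(a),\eta\rangle=\langle a,\delta^*(\eta)\rangle$ of the adjoint, and traciality on $\tilde M$ (for instance, for $a\in M_0$ one gets $\langle\tilde\delta(s),a\rangle=-\langle e_B,\delta(a)\rangle=-\langle s,\tilde\delta(a)\rangle$). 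The core difficulty is essential skew-adjointness of the closure $\overline{\tilde\delta}$. For $M_0$ finitely generated, I would follow the Fock-space argument of \cite[Proposition 2]{Sh07}: realize $\tilde M$ acting on Voiculescu's $B$-valued free Fock space, write $\tilde\delta$ as a combination of free creation and annihilation operators against $s$, and use density of analytic vectors to obtain essential skew-adjointness. This is the main obstacle. Granting it, Stone's theorem gives a one-parameter unitary group $U_t=\exp(t\overline{\tilde\delta})$ on $L^2(\tilde M)$; because $\tilde\delta$ is a real derivation on the weakly dense core $\tilde M_0$, conjugation by $U_t$ implements a one-parameter group $\{\alpha_t\}_{t\in\mathbb R}$ of trace-preserving $*$-automorphisms of $\tilde M$.

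\textbf{Step 3 (infinitesimal identity and general case).} For $x\in M_0\subset D(\overline{\tilde\delta})$, standard one-parameter group theory gives $\|t^{-1}(\alpha_t(x)-x)-\tilde\delta(x)\|_2\to 0$ as $t\to 0$, which yields the claim since $\tilde\delta(x)=\delta(x)\#s$. For the general case $M_0=\cup_n M_n$ with each $M_n$ finitely generated satisfying $\delta(M_n)\subset\text{span}(M_ne_BM_n)$, applying the finitely generated case to each $M_n$ (with the same $B$ and $s$) produces one-parameter automorphism groups $\alpha_t^{(n)}$ on $\tilde M^{(n)}:=W^*(M_n,B,s)\subset\tilde M$. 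The compatibility $\alpha_t^{(n+1)}|_{\tilde M^{(n)}}=\alpha_t^{(n)}$ follows because the corresponding derivations agree on common algebraic domains, and weak density of $\cup_n\tilde M^{(n)}$ in $\tilde M$ allows passage to the inductive limit to define $\alpha_t$ on all of $\tilde M$.
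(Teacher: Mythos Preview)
Your construction of $\tilde\delta$ in Step 1 matches the paper's, and the inductive-limit strategy in Step 3 is also the same (with the minor omission that one must first fix $n_0$ with $\delta^*(e_B)\in M_{n_0}$ and restrict to $n\geqslant n_0$, so that $\tilde\delta(s)\in\tilde M_n$). The genuine gap is in Step 2: you concede that essential skew-adjointness of $\overline{\tilde\delta}$ is ``the main obstacle'' and then do not prove it, deferring instead to a Fock-space argument from \cite{Sh07} that treats only $B=\mathbb C$ and whose extension to the amalgamated setting is not automatic.

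The paper avoids this obstacle entirely by exploiting a feature you wrote down but did not use: $\tilde\delta$ is \emph{algebraic}, i.e.\ it maps the finitely generated $*$-algebra $\tilde M_0$ into itself (since $\delta(M_0)\subset\text{span}(M_0 e_B M_0)$ gives $\tilde\delta(M_0)\subset\text{span}(M_0 s M_0)\subset\tilde M_0$, and $\tilde\delta(s)=-\delta^*(e_B)\in M_0$). For such derivations one does not need Stone's theorem at all: Voiculescu's cyclomorphy results \cite[Proposition 3.3 and Corollary 3.7]{Vo01} directly produce the one-parameter group $\alpha_t=e^{t\tilde\delta}$ of trace-preserving $*$-automorphisms of $\tilde M$, once one has verified reality $\tilde\delta(x^*)=\tilde\delta(x)^*$ and the trace condition $\tau(\tilde\delta(x))=0$ for all $x\in\tilde M_0$. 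The trace condition is equivalent to your skew-symmetry (via $\langle\tilde\delta(x),y\rangle+\langle x,\tilde\delta(y)\rangle=\tau(\tilde\delta(y^*x))$ for a real derivation), but your verification of it is a one-line sketch, whereas in the paper it is the actual technical core of the argument (Claims 1 and 2), requiring a careful freeness computation on alternating words in $M\ominus B$ and $L(\mathbb Z)\ominus\mathbb C1$, together with the identity $\partial^*(1\otimes 1)=s$ for the semicircular element.
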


The proof  is an easy adaptation of the proof of \cite[Proposition 2]{Sh07} and can be derived by combining results from \cite[Section 3]{Sh00}.
Nevertheless, for the reader's convenience, we will sketch a proof.

\begin{proof} 

If $b\in B$, then $\delta(b)=0$. This implies that $\delta^*(e_Bb)=\delta^*(e_B)b$ and $\delta^*(be_B)=b\delta^*(e_B)$. Since $e_Bb=be_B$, we deduce that $[\delta^*(e_B),b]=0$.

Let $D(\tilde\delta)$  be the weakly dense $*$-subalgebra of $\tilde M$ generated by $M_0\cup B\cup\{s\}$. Since $\delta(x)\#s\in D(\tilde\delta)$, for all $x\in M_0\cup B$, and $\delta^*(e_B)\in M_0$, we can define  $\tilde\delta:M_0\cup B\cup\{s\}\rightarrow D(\tilde\delta)$ by letting 

$$\tilde\delta(x)=\delta(x)\#s,\;\;\text{for all}\;\; x\in M_0\cup B,\;\;\text{and}\;\;\tilde\delta(s)=-\delta^*(e_B).$$

Since $[\delta^*(e_B),B]=0$ and $\delta_{|B}\equiv 0$,  it is easy to see that $\tilde\delta$ extends to a derivation $\tilde\delta:D(\tilde\delta)\rightarrow D(\tilde\delta)$.
Also, since $\delta(x^*)=\delta(x)^*$, for all $x\in M_0$, and $\delta^*(e_B)$ is self-adjoint, we deduce that $\tilde\delta(x^*)=\tilde\delta(x)^*$, for all $x\in D(\tilde\delta)$. Moreover, we have that:
\vskip 0.05in
{\bf Claim 1.} $\tau(\tilde\delta(x))=0$, for all $x\in D(\tilde\delta)$.
\vskip 0.05in

{\it Proof of Claim 1.} Denote by $\mathcal M$ the $*$-subalgebra of $M$ generated by $M_0$ and $B$. 
For $n\geqslant 1$, define $s_n=s^n-\tau(s^n)$. Then $D(\tilde\delta)$ is the linear span of $$\mathcal M\cup\{x_1s_{n_1}x_2...x_ks_{n_k}x_{k+1}|x_1,x_{k+1}\in \mathcal M,x_2,...,x_{k}\in \mathcal M\ominus B,n_1,...,n_k\geqslant 1\}.$$ 
Since $\delta(M_0)\subset\text{span}(M_0e_BM_0)$, we get that $\tilde\delta(\mathcal M)\subset\text{span}(\mathcal Ms\mathcal M)$. Hence, $\tau(\tilde\delta(x))=0$, for all $x\in\mathcal M$.
 Thus, in order to prove the claim, it suffices to show that $\tau(\tilde\delta(x))=0$, for every $x$ of the form $x=x_1s_{n_2}x_2...x_ks_{n_k}x_{k+1}$, for some $k\geqslant 1$, $x_1,x_{k+1}\in\mathcal M,x_2,...,x_{k}\in\mathcal M\ominus B$ and $n_1,...,n_k\geqslant 1$. 
 
 Below, we sketch the proof of this fact in the case when $k$ is even, leaving the (similar) case when $k$ is odd to the reader. Assume therefore that
  $k=2l$, for some $l\geqslant 1$.
 
 Notice first that by freeness it follows that for all $i\in\{1,...,k+1\}\setminus\{l\}$, $y\in ML(\mathbb Z)M$ and every $j\in\{1,...,k\}\setminus\{l,l+1\}$, $z\in L(\mathbb Z)ML(\mathbb Z)$, we have that   \begin{equation}\label{free1}\tau(x_1s_{n_1}...s_{n_{i}}ys_{n_{i+1}}...s_{n_k}x_{{k+1}})=0\;\;\;\text{and}\;\; \tau(x_1s_{n_1}...x_{n_j}zx_{n_{j+1}}...s_{n_k}x_{{k+1}})=0.\end{equation}

If $n\geqslant 1$,  then $\tilde\delta(s_n)=\tilde\delta(s^n)=\sum_{i=0}^{n-1}s^i\tilde\delta(s)s^{n-1-i}.$
Thus, $\tilde\delta(s_n)\in\;$span$(L(\mathbb Z)ML(\mathbb Z))$. Also, recall that $\tilde\delta(x_0)\in\;$span$(ML(\mathbb Z)M)$, for all $x_0\in \mathcal M$. 
By combining these facts with equation \ref{free1}, and using Leibniz's rule for $\tilde\delta$, it follows that \begin{equation}\label{trace}\tau(\tilde\delta(x))=\tau(x_1s_{n_1}...x_l\tilde\delta(s_{n_l}x_{l+1}s_{n_{l+1}})x_{l+2}...s_{n_k}x_{k+1}).\end{equation}

Next, we denote by $\mathcal K\subset\tilde M\ominus B$ the set of alternating words in $M\ominus B$ and $L(\mathbb Z)\ominus\mathbb C1$, which start or begin with an element from $L(\mathbb Z)\ominus\mathbb C1$. Again, by freeness it is easy to see that \begin{equation}\label{free2}\tau(x_1s_{n_1}...x_lyx_{l+2}...s_{n_k}x_{n_{k+1}})=0,\;\;\text{for all}\;\;y\in\mathcal K.\end{equation}

Now, if $b\in B$, then $\tau(\tilde\delta(s)b)=\tau(\delta^*(e_B)b)=-Tr(e_B\delta(b))=0$ and thus $\tilde\delta(s)\in M\ominus B$.
In combination with the formula for $\tilde\delta(s_n)$, we derive that 
$\tilde\delta(s_{n_l}),\tilde\delta(s_{n_{l+1}})\in\;$span$(L(\mathbb Z)(M\ominus B)L(\mathbb Z))$. Also, since $x_{l+1}\in\mathcal M$, we have that $\tilde\delta(x_{l+1})\in\;$span$(M(L(\mathbb Z)\ominus\mathbb C1)M)$. Using these relations and Leibniz's rule it follows that $\tilde\delta(s_{n_l}x_{l+1}s_{n_{l+1}})-\tau(\tilde\delta(s_{n_l}x_{l+1}s_{n_{l+1}}))$ belongs to the linear span of $\mathcal K$.

Combining this fact with \ref{trace} and \ref{free2} yields that $\tau(\tilde\delta(x))=\tau(\tilde\delta(s_{n_l}x_{l+1}s_{n_{l+1}}))\tau(x_1s_{n_1}...x_lx_{l+2}...s_{n_k}x_{k+1})$ and reduces Claim 1 to proving the following:

\vskip 0.05in
{\bf Claim 2.} 
$\tau(\tilde\delta (s_mcs_n))=0$, for all $m,n\geqslant 1$ and $c\in \mathcal M\ominus B$.
\vskip 0.05in

{\it Proof of Claim 2.}
First, since $\tilde\delta(s_n)=\sum_{i=0}^{n-1}s^i\tilde\delta(s)s^{n-1-i}$, for all $n\geqslant 1$, we get that \begin{equation}\label{1}\tau(\tilde\delta(s_mcs_n))=\sum_{i=0}^{m-1}\tau(s^i\tilde\delta(s)s^{m-1-i}cs_n)+\tau(s_m\tilde\delta(c)s_n)+\sum_{j=0}^{n-1}\tau(s_mcs^j\tilde\delta(s)s^{n-1-j}).\end{equation}

Note that $c,\tilde\delta(s)\in M\ominus B$ and $\tau(\tilde\delta(s)c)=-\tau(\delta^*(e_B)c)$. Since $\tilde\delta(c)\in\text{span}(MsM)$, for every $x\in L(\mathbb Z)$ we  have that \begin{equation}\label{2}\tau(\tilde\delta(c)x)=\tau(xs)\tau(\tilde\delta(c)s)=\tau(xs)\langle \delta(c)\#s,e_B\#s\rangle=\tau(xs)\langle\delta(c),e_B\rangle=\end{equation}
$$ \tau(xs)\tau(c\delta^*(e_B))=-\tau(\tilde\delta(s)c)\tau(xs).$$
Altogether, by combining equations \ref{1} and \ref{2} we get that \begin{equation}\label{free3}\tau(\tilde\delta(s_mcs_n))=\tau(\tilde\delta(s)c)[\sum_{i=0}^{m-1}\tau(s_ns^i)\tau(s^{m-1-i})-\tau(s_ns_ms)+\sum_{j=0}^{n-1}\tau(s_ms^j)\tau(s^{n-1-j})].\end{equation}

Now, let $\partial:\mathbb C\langle s\rangle\rightarrow \mathbb C\langle s\rangle\otimes\mathbb C\langle s\rangle$ be the difference quotient derivation given by 
$\partial(s)=1\otimes 1$. Since $s$ is  $(0,1)$ semicircular, we have that  $\partial^*(1\otimes 1)=s$ (see \cite[Proposition 3.8]{Vo98}) and hence $(\tau\otimes\tau)(\partial(s^p))=\tau(s^{p+1})$, for all $p$.  
Thus, we get that $$\sum_{i=0}^{m-1}\tau(s_ns^i)\tau(s^{m-1-i})+\sum_{j=0}^{n-1}\tau(s_ms^j)\tau(s^{n-1-j})=$$ $$(\tau\otimes\tau)[s^n\partial(s^m)-\tau(s^n)\partial(s^m)+\partial(s^n)s^m-\tau(s^m)\partial(s^n)]=$$ $$\tau(s^{m+n+1})-\tau(s^n)\tau(s^{m+1})-\tau(s^m)\tau(s^{n+1}).$$

Since the last term is equal to $\tau(s_nss_m)$ by equation \ref{free3} we conclude that $\tau(\tilde\delta(s_mcs_n))=0$. This finishes the proof of Claim 2 and hence of Claim 1.
\hfill$\square$

Now, let $\tilde M_0\subset\tilde M$ be the $*$-subalgebra generated by $M_0\cup\{s\}$. Then $\tilde M_0\subset D(\tilde\delta)$ and $\tilde\delta(\tilde M_0)\subset\tilde M_0$. 

If $M_0$ is finitely generated, then $\tilde M_0$ is finitely generated.
By using the fact that  $\tilde\delta(x^*)=\tilde\delta(x)^*$, for all $x\in\tilde M_0$, and Claim 1, \cite[Proposition 3.3 and Corollary 3.7]{Vo01} imply that $\tilde\delta$ exponentiates to a one-parameter group $\alpha_t=e^{t\tilde\delta}$ of trace preserving automorphisms of $\tilde M$. Moreover, $\alpha_t$ satisfies the convergence required in the conclusion.

Finally, assume that $M_0$ is the increasing union of $*$-algebras $M_n$ satisfying $\delta(M_n)\subset\; $span$(M_ne_BM_n)$. Let $n_0\geqslant 1$ such that $\delta^*(e_B)\in M_{n_0}$. 
For every $n$, let $\tilde M_n$ be the $*$-algebra generated by $M_n\cup\{s\}$. Then for every $n\geqslant n_0$, we have that $\tilde\delta(\tilde M_n)\subset\tilde M_n$. By the above, $\alpha_t=e^{t\tilde\delta}$ defines a one-parameter group of trace preserving automorphisms of the weak closure of $\tilde M_n$. Since the increasing union $\cup_{n\geqslant 1}\tilde M_n$ is weakly dense in $\tilde M$, the conclusion follows.
\end{proof}

\begin{remark}\label{madef}
Proposition \ref{dimagen} can be used to recover several known constructions of malleable deformations, in the sense of Popa. Let us give two such examples (see also \cite[Example 1]{Sh07}).

 (1) Let $M=M_1*_{B}M_2$ be an amalgamated free product of tracial von Neumann algebras. Denote by $D(\delta)$ the $*$-algebra generated by $M_1$ and $M_2$. Define a derivation  $\delta:D(\delta)\rightarrow L^2(\langle M,e_B\rangle)$  by letting  $\delta(x)=i[x,e_B]$, if $x\in M_1$, and $\delta(x)=0$, if $x\in M_2$. Then $\delta$ is real and satisfies the assumptions of Proposition \ref{dimagen}. For $t\in\mathbb R$, let $u_t=\exp(its)$. The resulting one-parameter group of automorphisms of $\tilde M=M*_{B}(B\bar{\otimes}L(\mathbb Z))$ is given by $\alpha_t(x)=u_txu_t^*$, if $x\in M_1$, and $\alpha_t(x)=x$, if $x\in M_2\cup L(\mathbb Z)$. This is a variant of the {\it free malleable deformation} of $M$ introduced in \cite{IPP05}.

(2) Let $Q\subset P$ be tracial von Neumann algebras and $\theta:Q\rightarrow P$ be a $*$-homomorphism. From this data, an HNN extension $M=\text{HNN}(P,Q,\theta)$ was constructed in  \cite[Section 3]{FV10}. Briefly,  $M$ is a tracial von Neumann algebra generated by $P$ and a unitary element $u$ such that $uxu^*=\theta(x)$, for all $x\in Q$. Denote by $D(\delta)$ the $*$-algebra generated by $P$ and $u$. Then it is easy to see that  $\delta:D(\delta)\rightarrow L^2(\langle M,e_Q\rangle)$ given by $\delta(x)=0$, if $x\in P$, and $\delta(u)=iue_Q$, defines a real derivation  which satisfies the assumptions of Proposition 3.3.  For $t\in\mathbb R$, let $v_t=\exp(its)$. Then the one-parameter group of automorphisms of $\tilde M=M*_Q(Q\bar{\otimes}L(\mathbb Z))$ provided by Proposition \ref{dimagen} satisfies $\alpha_t(x)=x$, if $x\in P\cup L(\mathbb Z)$, and $\alpha_t(u)=uv_t$.  This recovers the malleable deformation of $M$ introduced in \cite[Section 3.5]{FV10}.
\end{remark}

We are grateful to Jesse Peterson for pointing out to us the following remark.

\begin{remark}\label{stallings}
In the case of group algebras, the existence of unbounded algebraic derivations implies strong restrictions on the structure of the group. Let $\Gamma$ be an infinite, finitely generated countable  group and assume that there exists an unbounded derivation $\delta:\mathbb C\Gamma\rightarrow\mathbb C\Gamma\otimes\mathbb C\Gamma$. Define $b:\Gamma\rightarrow\mathbb C\Gamma\otimes\mathbb C\Gamma$ by letting $b(g)=\delta(u_g)u_g^*$. Then we have that $b(gh)=b(g)+u_gb(h)u_g^*$, for all $g,h\in\Gamma$. Now,  the representation of $\Gamma$ on $\mathbb C\Gamma\otimes\mathbb C\Gamma$ by conjugation is isomorphic to the left regular representation $\lambda$ of $\Gamma$ on $\oplus_{n=1}^{\infty}\mathbb C\Gamma$. Thus, we obtain a cocycle $c=(c_n):\Gamma\rightarrow\oplus_{n=1}^{\infty}\mathbb C\Gamma$.

Since $\delta$ is unbounded (hence not inner), it is easy to see that 
 not all of the cocycles $c_n:\Gamma\rightarrow\mathbb C\Gamma$ can be inner. By \cite[Lemma 2]{BV97} this yields that $\Gamma$ has at least two ends. Stallings' theorem now implies that $\Gamma$ is either an amalgamated free product or an HNN extension over a finite subgroup. Thus, if $\Gamma$ is moreover torsion free, then  it is the free product $\Gamma=\Gamma_1*\Gamma_2$ of two infinite groups. 
\end{remark}

We end this section with a result from \cite[Corollary 25]{Da10b} which shows that under a Lipschitz conjugate variables condition, the von Neumann algebra $M$ generated by $n$ self-adjoint elements $X_1,...,X_n$, admits a deformation into $M*L(\mathbb F_{\infty})$.

\begin{theorem}\cite{Da10b}\label{free} Let $(M,\tau)$ be a tracial von Neumann algebra generated by $n\geqslant 2$ self-adjoint elements $X_1,...,X_n$. Let $M_0$ be the $*$-algebra generated by $X_1,...,X_n$. For every $1\leqslant i\leqslant n$, let $\delta_i:M_0\rightarrow L^2(M)\bar{\otimes}L^2(M)$ be the partial free difference quotient  $\delta_i(X_j)=\delta_{i,j}X_i$. Denote $\delta=(\delta_1,...,\delta_n):M_0\rightarrow (L^2(M)\bar{\otimes}L^2(M))^{\oplus_n}$ and let $\bar{\delta}$ be the closure of $\delta$.

Assume that $1\otimes 1$ is in the domain of $\delta_i^*$ and denote by $\xi_i=\delta_i^*(1\otimes 1)$ the corresponding conjugate variable. Moreover, assume that $\xi_i$ is in the domain of $\bar{\delta}$ and $\bar{\delta}(\xi_i)\in (M\bar{\otimes}M^{op})^{\oplus_n}$, for all $i\in\{1,...,n\}$. Here, $M^{op}$ denotes the opposite algebra of $M$, and we consider the inclusion $M\bar{\otimes}M^{op}\subset L^2(M\bar{\otimes}M^{op})\cong L^2(M)\bar{\otimes}L^2(M)$.

Then for every $t\geqslant 0$, there exists a free family $S_{1}^{(t)},...,S_{n}^{(t)}\in L(\mathbb F_{\infty})$ of  $(0,1)$-semicircular elements and a  $*$-homomorphism $\alpha_t:M\rightarrow M*L(\mathbb F_{\infty})$,  such that $$\|\frac{1}{\sqrt{t}}(\alpha_t(x)-x)-\sum_{i=1}^n\delta_i(x)\#S_{i}^{(t)}\|_2\rightarrow 0,\;\;\text{as}\;\; t\rightarrow 0,\;\;\;\text{for all}\;\;\; x\in M_0.$$  
\end{theorem}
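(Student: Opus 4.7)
The plan is to construct $\alpha_t$ as the flow of a free stochastic differential equation driven by free Brownian motions, following the strategy initiated by Voiculescu and developed by Biane--Speicher and the first author. Enlarge $M$ to $\widetilde M = M*L(\mathbb F_\infty)$, and inside $L(\mathbb F_\infty)$ fix a free family of free Brownian motions $(S_i(t))_{1\le i\le n,\,t\ge 0}$ that are free from $M$, so that $S_i^{(t)}:=S_i(t)/\sqrt{t}$ is a $(0,1)$-semicircular variable for each fixed $t>0$ and the family $(S_i^{(t)})_{1\le i\le n}$ is free.

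First I would construct processes $Y_1(t),\ldots,Y_n(t)\in\widetilde M$ as the unique strong solutions of the free SDE
$$dY_i(t) = dS_i(t) - \tfrac12\,\xi_i\bigl(Y_1(t),\ldots,Y_n(t)\bigr)\,dt, \qquad Y_i(0) = X_i,$$
where $\xi_i(Y_1(t),\ldots,Y_n(t))$ is defined via the non-commutative functional calculus implicit in the hypothesis $\bar\delta(\xi_i)\in (M\bar\otimes M^{op})^{\oplus n}$. This hypothesis is precisely a non-commutative Lipschitz bound on the map $(X_1,\ldots,X_n)\mapsto \xi_i$, and it allows one to run Picard iteration in $L^\infty_{\mathrm{loc}}([0,\infty),L^2(\widetilde M))$ to obtain unique strong solutions. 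Then define $\alpha_t(X_i):=Y_i(t)$ and extend $\alpha_t$ to $M_0$ by the universal property of the polynomial algebra.

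The crucial step is to verify that $\alpha_t$ extends to a trace-preserving $*$-homomorphism $M\to \widetilde M$, equivalently that the joint $*$-distribution of $(Y_1(t),\ldots,Y_n(t))$ coincides with that of $(X_1,\ldots,X_n)$ for every $t\ge 0$. This is proved by free It\^o calculus: for a non-commutative polynomial $P$ one computes, using It\^o's formula together with the conjugate-variable relation
$$\langle P,\xi_i\otimes 1\rangle_{L^2(M)\bar\otimes L^2(M)} = (\tau\otimes\tau)\bigl(\delta_i(P)\bigr),$$
that $\frac{d}{dt}\tau\!\bigl(P(Y_1(t),\ldots,Y_n(t))\bigr)=0$, and similarly for all higher joint moments. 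The drift coefficient $-\tfrac12\xi_i$ is exactly the amount required to cancel the It\^o bracket coming from $dS_i\,dS_i$, so that the joint law is invariant. Since $\alpha_t$ preserves the trace, the involution and the polynomial relations satisfied by $X_1,\ldots,X_n$, it extends by $\|\cdot\|_2$-continuity to a normal $*$-homomorphism $\alpha_t:M\to\widetilde M$.

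Finally, the short-time asymptotic follows by iterating $Y_i(t)=X_i+S_i(t)-\tfrac12\int_0^t \xi_i(Y_s)\,ds$. On a monomial $x=X_{i_1}\cdots X_{i_k}$, all contributions from the drift integrals are $O(t)=o(\sqrt t)$ in $\|\cdot\|_2$, while the first-order expansion of the semicircular noise produces exactly
$$\alpha_t(x)-x = \sum_{i=1}^n \delta_i(x)\# S_i(t) + o(\sqrt t),$$
and dividing by $\sqrt t$ gives the stated convergence on $M_0$. The main obstacle is the middle step: upgrading $\alpha_t$ from a formal map on $\mathbb C\langle X_1,\ldots,X_n\rangle$ to an honest trace-preserving $*$-homomorphism on $M$. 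This is where the full strength of $\bar\delta(\xi_i)\in (M\bar\otimes M^{op})^{\oplus n}$ is needed, both to make sense of the operator-valued It\^o calculation and to control the $\|\cdot\|_2$-continuity required to extend from $M_0$ to $M$.
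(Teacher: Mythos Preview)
The paper does not prove this statement; it is quoted as \cite[Corollary~25]{Da10b}, and your free-SDE outline is indeed the strategy of that reference: solve $dY_i=dS_i-\tfrac12\,\xi_i(Y(t))\,dt$ in $M*L(\mathbb F_\infty)$, use the Lipschitz hypothesis $\bar\delta(\xi_i)\in(M\bar\otimes M^{op})^{\oplus n}$ to run Picard iteration, verify stationarity by free It\^o calculus, and read off the $\sqrt t$-asymptotic from the integral equation. So at the level of the approach you are on target.

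One cosmetic point: your displayed conjugate-variable identity should read $\tau(\xi_i P)=(\tau\otimes\tau)(\delta_i(P))$; the pairing lives in $L^2(M)$, not $L^2(M)\bar\otimes L^2(M)$.

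The substantive gap is the phrase ``non-commutative functional calculus implicit in the hypothesis.'' Since $\xi_i$ is not a polynomial in $X_1,\dots,X_n$, you cannot simply plug $Y_j(t)$ in for $X_j$; and your stationarity computation tacitly assumes that whatever you call $\xi_i(Y(t))$ is the conjugate variable of $Y_i(t)$ with respect to $Y_1(t),\dots,Y_n(t)$, which is exactly what stationarity would give you. Breaking this circularity is the real content of \cite{Da10b}: one builds a non-commutative path space in which the drift admits an analytic extension governed by the bound $\bar\delta(\xi_i)\in(M\bar\otimes M^{op})^{\oplus n}$, and stationarity is established alongside existence rather than after the fact. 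You correctly flag this as the obstacle, but ``Picard iteration in $L^2$'' does not by itself resolve it.
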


 \section{$L^2$-rigidity results}
 
 The main goal of this section is to prove Theorem \ref{L^2-rigid}. Let us first recall Peterson's notion of $L^2$-rigidity for von Neumann algebras (see \cite[Definition 4.1 and Lemma 2.1]{Pe06}). 
 
 \begin{definition}\cite{Pe06}
 A tracial von Neumann algebra $(M,\tau)$ is  {\it L$^2$-rigid} if for any densely defined real closable derivation $\delta: D(\delta)\rightarrow (L^2(M)\bar{\otimes} L^2(M))^{\oplus\infty}$, the deformation $\phi_t=\exp(-t{\delta}^*\bar{\delta})$ converges uniformly to id$_M$ on $(M)_1$, as $t\rightarrow 0$.
 \end{definition}
 
 \subsection{Proof of Theorem \ref{L^2-rigid}} By \cite[Corollary 4.6]{Pe06} any non-amenable II$_1$ factor that is non-prime or has property Gamma is $L^2$-rigid. Thus, in order to get the conclusion, it suffices to prove that $M$ is not $L^2$-rigid. Assume by contradiction that $M$ is $L^2$-rigid.
 
 Recall that  $\delta:M_0\rightarrow L^2(M)\bar{\otimes} L^2(M)$ is a densely defined real derivation such that $M_0$ contains a non-amenability set $S$ for $M$. 
 For every $t>0$, denote $\phi_t=\exp(-t\delta^*\bar{\delta})$. 
 By Theorem \ref{dilation} there exist a tracial von Neumann algebra $\tilde M$ containing $M$ and $*$-homomorphisms $\alpha_t:M\rightarrow\tilde M$ such that $\phi_t=E_M\circ\alpha_t$, for all $t>0$. 
 
 Since $M$ is  $L^2$-rigid, $\phi_t$ converges uniformly to id$_M$ on $(M)_1$. Thus, we can find  $t_0>0$ such that $\|\phi_t(x)-x\|_2\leqslant\frac{1}{2}$, for all $t\in [0,t_0]$ and every $x\in (M)_1$. Fix $t\in [0,t_0]$.
 Then for every $u\in\mathcal U(M)$ we have that $\tau(\alpha_t(u)u^*)=\tau(\phi_t(u)u^*)\geqslant\frac{1}{2}.$ Denote by $K\subset (\tilde M)_1$ the $\|.\|_2$-closure of the convex hull of the set $\{\alpha_t(u)u^*|u\in\mathcal U(M)\}$ and let $v_t\in K$ be the unique element of minimal $\|.\|_2$. Then $\tau(v_t)\geqslant\frac{1}{2}$, hence $v_t\not=0$, and $\alpha_t(u)v_t=v_tu$, for all $u\in\mathcal U(M)$.
 
 Moreover, if we let $M_t$ be the von Neumann algebra generated by $\alpha_t(M)$ and $M$, then $v_t\in M_t$ and $v_t^*v_t\in M'\cap M_t$.
Thus, we get that  $v_t^*v_t-E_M(v_t^*v_t)\in M'\cap M_t$. On the other hand, Theorem \ref{dilation} gives that the $M$-$M$ bimodule $L^2(M_t)\ominus L^2(M)$ is isomorphic to a sub-bimodule of  $(L^2(M)\bar{\otimes} L^2(M))^{\oplus\infty}$.
Since $M$ is diffuse, we conclude that  $v_t^*v_t\in M'\cap M=\mathbb C1$. Thus, by rescaling $v_t$, we get that there is a unitary $v_t\in M_t$ such that $\alpha_t(M)\subset v_tMv_t^*$, for every $t\in [0,t_0]$. 

Let $t_n\in (0,t_0]$ be a sequence such that $t_n\rightarrow 0$.
Since $S$ is a non-amenability set for $M$,  by  Lemma \ref{uniform} (2) we can find a constant $C>0$ such that for all $n\geqslant 1$ we have $$\|\alpha_{t_n}(x)-\phi_{t_n}(x)\|_2=\|\alpha_{t_n}(x)-E_M(\alpha_{t_n}(x))\|_2\leqslant C\sum_{y\in S}\|\alpha_{t_n}(y)-y\|_2,\;\;\text{for all}\;\; x\in (M)_1.$$

Now, if we take $x\in M_0$, then Lemma \ref{estimate} implies that $\frac{1}{\sqrt{t}}\|\alpha_t(x)-\phi_t(x)\|_2\rightarrow\sqrt{2}\|\delta(x)\|_2$ and $\frac{1}{\sqrt{t}}\|\alpha_t(x)-x\|_2\rightarrow \sqrt{2}\|\delta(x)\|_2$, as $t\rightarrow 0$. In combination with the last inequality this gives that $\|\delta(x)\|_2\leqslant C\sum_{y\in S}\|\delta(y)\|_2$, for all $x\in M_0$. Thus, $\delta$ is bounded,  which is a contradiction.\hfill$\square$

\begin{remark}\label{dense}
Let $M$ be a II$_1$ factor and $p_n\in M$  a  sequence of projections such that $\sum_{n=1}^{\infty}p_n=1$. Then $M_0=\cup_{n\geqslant 1}(p_1+...+p_n)M(p_1+...+p_n)$ is a weakly dense $*$-subalgebra of $M$. Let $\alpha_n$ be a sequence of positive real numbers such that $\sum_{n=1}^{\infty}\alpha_n^2\tau(p_n)^2=+\infty$. Then the map $\delta:M_0\rightarrow L^2(M)\bar{\otimes}L^2(M)$ given by $\delta(x)=\sum_{n=1}^{\infty}i\alpha_n\hskip 0.02in [x,p_n\otimes p_n]$ is a well-defined derivation. Moreover, it is easy to see that $\delta$ is real, unbounded and closable.
This shows that the assumption that $M_0$ contains a non-amenability set for $M$ is necessary in the hypothesis of Theorem \ref{L^2-rigid}.
\end{remark}

\subsection{Proof of Corollary \ref{conjvar}}  We prove here Corollary \ref{conjvar} under the first assumption, and postpone dealing with the second assumption until Corollary \ref{liber}. Denote by $M_0$ the $*$-algebra generated by $X_1,...,X_n$. For every $i\in\{1,...,n\}$, let $\delta_i:M_0\rightarrow L^2(M)\bar{\otimes}L^2(M)$ be the partial free difference quotient derivation given by $\delta_i(X_j)=\delta_{i,j}1\otimes 1$. Further, let $\delta=(\delta_1,...,\delta_n):M_0\rightarrow (L^2(M)\bar{\otimes}L^2(M))^{\oplus n}$.
Since $\delta_i^*(1\otimes 1)=\mathscr J_1(X_i:\mathbb C\langle X_1,...,\hat{X_i},...,X_n\rangle)$ exists and belongs to $L^2(M)$,  \cite[Corollary 4.1]{Vo98} implies that $\delta_i$ is a closable derivation, for all $i\in\{1,2,...,n\}$. Therefore, $\delta$ is  real and closable. 

By \cite[Theorem 13]{Da08}, $M$ is a II$_1$ factor without property Gamma.
Moreover, since the first and second variables are bounded, \cite[Lemmas 9 and 10]{Da08}  imply that $S=\{X_1,...,X_n\}$ is a non-Gamma set for $M$ (see  \cite[Remark 11]{Da08}). By Lemma \ref{non-amen} we therefore deduce that $S$ is a non-amenability set for $M$. 

Since $\Phi^*(X_1,...,X_n)=\sum_{i=1}^n\|\delta_i^*(1\otimes 1)\|_2^2<\infty$, the distribution of $X_i$ has no atoms, for every $i$.
Indeed, by  \cite[Proposition 7.9]{Vo98}, the free entropy $\chi^*$ satisfies $\chi^*(X_1,...,X_n)>-\infty$. Since $\chi^*(X_1)+...+\chi^*(X_n)\geqslant\chi^*(X_1,...,X_n)$ (see \cite[Proposition 7.3]{Vo98}), we deduce that $\chi^*(X_i)>-\infty$, for all $i$. Finally, since $\chi^*(X_i)=\chi(X_i)$ (see \cite[Proposition 7.6]{Vo98}), by \cite[Proposition 4.5]{Vo94}  we get that the distribution of $X_i$ has no atoms, for all $i\in\{1,...,n\}$.

 As in \cite[Section 1.6]{Pe04} it follows that $\delta$ is not inner. Moreover, \cite[Theorem 2.2]{Pe04} implies that $\delta$ is unbounded.
Altogether, we can apply Theorem \ref{L^2-rigid} and deduce the conclusion.\hfill$\square$

\vskip 0.05in 

Let $(M,\tau)$ be a tracial von Neumann algebra generated by $n\geqslant 2$ algebraically free $*$-subalgebras $A_1,...,A_n$.
The next result uses the liberation Fisher information $\varphi^*(A_1;...;A_n)$ introduced in  \cite[Definition 9.1]{Vo99}. The definition involves the derivations $\delta_{A_i}:D(\delta_{A_i})\to L^2(M)\bar{\otimes}L^2(M)$, where $D(\delta_{A_i})$ is the algebra generated by $A_1,...,A_n$, $\delta_{A_i}(a)=a\otimes 1-1\otimes a$, if $a\in A_i,$ and $\delta_{A_i}(a)= 0$, if $a\in A_j$ for $j\neq i$. See \cite[Section 5]{Vo99}  for more details. 

Assuming that $\varphi^*(A_1;...;A_n)=\sum_{i=1}^n||\delta_{A_i}^*(1\otimes1)||_2^2<\infty$, Voiculescu proved  that $\delta_{A_i}$ is a closable derivation \cite[Corollary 6.3]{Vo99}. Note that the combination of Remark 9.2 (f), Proposition 5.9 (a) and Definition 9.1 from \cite{Vo99} gives that \begin{equation}\label{liberation}\varphi^*(A_1;W^*(A_2,...,A_n))=\varphi^*(A_1;\mathbb C\langle A_2,...,A_n\rangle)\leq 2\varphi^*(A_1;...;A_n)\end{equation}

Finally, we will also need the following relation  between Fisher information  and liberation Fisher information: if $\Phi^*(X_1,...,X_n)<\infty$ then $\varphi^*(W^*(X_1);...;W^*(X_n))<\infty$ \cite[Corollary 5.11]{Vo99}.

\begin{corollary}\label{liber}
Let $(M,\tau)$ be a tracial von Neumann algebra which is generated by $n\geqslant 2$ non-trivial ($\neq\mathbb C1$) von Neumann subalgebras $A_1,...,A_n$. Assume that $A_1,...,A_n$  have finite liberation Fisher information $\varphi^*(A_1;...;A_n)<\infty$,  $A_1$ is diffuse,  and $A_2$ is a  non-amenable II$_1$ factor.

Then M is a non $L^2$-rigid II$_1$ factor. Thus, $M$ is prime, does not have property $\Gamma$ nor property $(T)$. In particular, this is the case if $M$  is generated by $m\geqslant 3$ self-adjoint elements $X_1,...,X_m$ satisfying  $\Phi^*(X_1,...,X_m)<\infty$.
\end{corollary}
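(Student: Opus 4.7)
The plan is to apply Theorem \ref{L^2-rigid} to a real closable derivation built from the liberation gradients. Let $M_0$ be the $*$-subalgebra of $M$ generated by $A_1,\ldots,A_n$. By \cite[Corollary 6.3]{Vo99}, the hypothesis $\varphi^*(A_1;\ldots;A_n)<\infty$ makes each liberation gradient $\delta_{A_j}:M_0\to L^2(M)\bar\otimes L^2(M)$ closable. I would set $\delta=(i\delta_{A_1},\ldots,i\delta_{A_n}):M_0\to (L^2(M)\bar\otimes L^2(M))^{\oplus n}$; the factor of $i$ corrects the sign in $\mathcal J(\delta_{A_j}(a))=-\delta_{A_j}(a^*)$, making $\delta$ a real closable densely defined derivation.

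Next I would produce a non-amenability set for $M$ sitting inside $M_0$. Since $A_2$ is a non-amenable II$_1$ factor, Connes' theorem gives a finite set $S\subset A_2$ such that $\|\eta\|_2\leq K\sum_{y\in S}\|y\eta-\eta y\|_2$ for every $\eta\in L^2(A_2)\bar\otimes L^2(A_2)$. Because $A_2$ is a II$_1$ factor, $L^2(M)$ is on each side a multiple of $L^2(A_2)$ as an $A_2$-module, so the coarse $M$-$M$ bimodule is, as an $A_2$-$A_2$ bimodule, a multiple of the coarse $A_2$-$A_2$ bimodule. The inequality therefore extends and $S\subset A_2\subset M_0$ is a non-amenability set for $M$.

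To check that $\delta$ is unbounded, I would argue by contradiction: if $\delta$ were bounded, then each $\delta_{A_j}$ would be inner, giving $\delta_{A_1}(x)=x\xi-\xi x$ for some $\xi\in L^2(M)\bar\otimes L^2(M)$. For $a\in A_2$, $\delta_{A_1}(a)=0$ forces $\xi$ to be $A_2$-central; but a multiple of the coarse $A_2$-$A_2$ bimodule carries no nonzero $A_2$-central vectors (if $u_n\in\mathcal U(A_2)$ tends to $0$ weakly then mixing of the coarse bimodule gives $u_n\xi u_n^*\to 0$ weakly, while centrality forces $u_n\xi u_n^*=\xi$). Hence $\xi=0$, contradicting $\delta_{A_1}(a)=a\otimes 1-1\otimes a\neq 0$ for any non-scalar $a\in A_1$ (which exists since $A_1$ is diffuse). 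Applying Theorem \ref{L^2-rigid} also requires $M$ itself to be a non-amenable II$_1$ factor; non-amenability is immediate from $A_2\subset M$, but factoriality must be established separately. I expect this to be the main technical obstacle: I would try to show that any $z\in Z(M)\subset A_2'\cap M$ must be scalar, by running a version of the $v_t$-construction from the proof of Theorem \ref{L^2-rigid} on cutdowns by central projections and exploiting that $A_2\subset M$ already carries a non-amenability set (so the spectral gap argument of Lemma \ref{uniform} applies on each central corner).

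With those ingredients, Theorem \ref{L^2-rigid} yields that $M$ is not $L^2$-rigid, hence prime and without property Gamma; since property $(T)$ forces $L^2$-rigidity by \cite{Pe06}, $M$ also lacks property $(T)$. For the ``in particular'' statement, I would take $A_1=W^*(X_1)$ and $A_2=W^*(X_2,\ldots,X_m)$. Then $A_1$ is diffuse because $\Phi^*(X_1)\leq\Phi^*(X_1,\ldots,X_m)<\infty$ rules out atoms by \cite[Proposition 4.5]{Vo94}. Moreover, the identity $\tilde\xi_i=E_{W^*(X_2,\ldots,X_m)\bar\otimes W^*(X_2,\ldots,X_m)}(\xi_i)$ relating the conjugate variable of $X_i$ inside $W^*(X_2,\ldots,X_m)$ to the one inside $M$ yields $\Phi^*(X_2,\ldots,X_m)\leq\Phi^*(X_1,\ldots,X_m)<\infty$, so since $m-1\geq 2$, \cite[Theorem 13]{Da08} makes $A_2$ a II$_1$ factor without property Gamma, hence non-amenable. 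Finally $\varphi^*(A_1;A_2)\leq 2\varphi^*(W^*(X_1);\ldots;W^*(X_m))<\infty$ by inequality (\ref{liberation}) together with \cite[Corollary 5.11]{Vo99}, so the main part of the corollary applies.
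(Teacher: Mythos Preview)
Your overall strategy matches the paper's: use a liberation derivation, locate a non-amenability set inside $A_2$ via Connes' theorem, check unboundedness by showing the derivation cannot be inner (using diffuseness of the other algebra), and invoke Theorem~\ref{L^2-rigid}. The ``in particular'' part is also handled the same way. Two minor remarks: the paper works with the single derivation $\delta_{A_2}$ rather than the full tuple, and its unboundedness argument is the mirror image of yours (it uses that $A_1$ is diffuse to show $\delta_{A_2}$ is not inner).

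The genuine gap is factoriality. Your proposed approach---running the $v_t$-construction and Lemma~\ref{uniform} on cutdowns by central projections---does not lead anywhere: those arguments produce uniform estimates on each central summand, but give no mechanism for ruling out a nontrivial center in the first place. The spectral gap inequality for $S\subset A_2$ holds just as well on $L^2(Mz)\bar\otimes L^2(Mz)$ for any central projection $z$, so nothing forces $z$ to be scalar.

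The paper's argument is quite different and worth learning. Take $x\in\mathcal Z(M)$ and consider the resolvent maps $\eta_\alpha=\alpha(\alpha+\delta_{A_2}^*\delta_{A_2})^{-1}$. Since $\delta_{A_2}$ vanishes on $A_1$, one has $\eta_\alpha(y)=y$ for $y\in A_1$, and multiplicativity of $\eta_\alpha$ over $A_1$ then gives $[\delta_{A_2}(\eta_\alpha(x)),y]=0$ for all $y\in A_1$. Because $A_1$ is diffuse, a Hilbert--Schmidt operator commuting with it must vanish, so $\delta_{A_2}(\eta_\alpha(x))=0$. Letting $\alpha\to\infty$ and using closability yields $x\in D(\bar\delta_{A_2})$ with $\bar\delta_{A_2}(x)=0$. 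Then for $y\in A_2$ one computes $0=\bar\delta_{A_2}([x,y])=[y,[x,1\otimes1]]$, and diffuseness of $A_2$ forces $[x,1\otimes1]=0$, i.e.\ $x\in\mathbb C1$. This is the step you were missing.
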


\begin{proof}
Since $A_1$ is diffuse, by arguing as in \cite[Theorem 1]{Da08} it follows that $M$ is a factor. Indeed, let $x\in\mathcal Z(M)$ and  define $\eta_{\alpha}:M\rightarrow M$ by $\eta_\alpha={\alpha}({\alpha+\delta_{A_2}^*\delta_{A_2}})^{-1}$, for $\alpha>0$. If $y\in A_1$, then $\delta_{A_2}(y)=0$, hence $\eta_{\alpha}(y)=y$. This implies that $\eta_{\alpha}(yx)=y\eta_{\alpha}(x)$ and $\eta_{\alpha}(xy)=\eta_{\alpha}(x)y$. In particular, we get that $[\delta_{A_2}(\eta_\alpha(x)),y]=\delta_{A_2}(\eta_\alpha([x,y]))=0$, for all $\alpha>0$ and $y\in A_1$. Since $A_1$ is diffuse and $\delta_{A_2}(\eta_{\alpha}(x))$ can be viewed as a Hilbert-Schmidt operator, we deduce that $\delta_{A_2}(\eta_{\alpha}(x))=0$. Since $\|\eta_{\alpha}(x)-x\|_2\rightarrow 0$, as $\alpha\rightarrow\infty$, and $\delta_{A_2}$ is closable, we get that $x\in D(\bar{\delta}_{A_2})$ and $\bar{\delta}_{A_2}(x)=0$. 
Thus, for every $y\in A_2$, we have $0=\bar{\delta}_{A_2}([x,y])=[x,y\otimes 1-1\otimes y]=[y,[x,1\otimes 1]]$. Now, since $A_2$ is diffuse we get that $[x,1\otimes 1]=0$, which implies that $x\in\mathbb C1$, as claimed. 
 
Since $A_2$  is a non-amenable II$_1$ factor, by \cite{Co76} we can find a non-amenability set $S\subset A_2$. But then $S$ is a non-amenability set for $M$ which is contained in  $D(\delta_{A_2})$. Since $A_1$ is diffuse, $\delta_{A_2}$ is not inner and hence in not bounded by \cite[Theorem 2.2]{Pe04}. Thus, we can apply Theorem \ref{L^2-rigid} to derive the conclusion.

Now, assume that $M$  is generated by $m\geqslant 3$ self-adjoint elements with  $\Phi^*(X_1,...,X_m)<\infty$. Let $A_1$ be the von Neumann algebra generated by $X_1$ and $A_2$ the von Neumann algebra generated by $X_2,...,X_m$. Since $\Phi^*(X_1,...,X_m)<\infty$, as in the proof  of Corollary \ref{conjvar}, we deduce that the distributions of $X_1,...,X_m$ have no atoms. As a consequence,  $A_1$ and $A_2$ are diffuse. Moreover, by \cite[Theorem 13]{Da08}, $A_2$ is a non-amenable II$_1$ factor. 

Since  $\Phi^*(X_1,...,X_m)<\infty$, we have that  $\varphi^*(W^*(X_1);...;W^*(X_n))<\infty$.
By using equation \ref{liberation} we further get that $\varphi^*(A_1;A_2)\leqslant 2\varphi^*(W^*(X_1);...;W^*(X_m))<\infty$. Altogether, the above shows that $M$ is non $L^2$-rigid factor.
\end{proof}

\section{Lipschitz conjugate variables and absence of Cartan subalgebras}

This section is mainly devoted to the proof Theorem \ref{freedef}.

\subsection{Proof of Theorem \ref{freedef}} We  claim that the Lipschitz condition implies  that the first and second conjugate variables are bounded. Firstly,  Voiculescu (see equation (1) in \cite{Da08}) noticed that for all $x\in M_0=\mathbb C\langle X_1,...,X_n\rangle$ we have \begin{equation}\label{voi}\sum_{i=1}^n(\delta_i(x)(X_i\otimes 1)-(1\otimes X_i)\delta_i(x))=x\otimes 1-1\otimes x.\end{equation}

Moreover, this identity holds for every $x\in D(\bar{\delta})$.
Thus, if $x\in D(\bar{\delta})$ satisfies $\bar{\delta}(x)\in (M\bar{\otimes}M^{op})^{\oplus_n}$, then $x=\tau(x)+E_{M\otimes 1}(\sum_{i=1}^n(\delta_i(x)(X_i\otimes 1)-(1\otimes X_i)\delta_i(x)))$ and therefore $x\in M$. This implies that $\mathscr J_1(X_i:\mathbb C\langle X_1,....,X_{i-1},X_{i+1},...,X_n\rangle=\xi_i=\delta_i^*(1\otimes 1)\in M$, for every $i\in\{1,...,n\}$.
Now, recall that $\mathscr J_2(X_i:\mathbb C\langle X_1,...,X_{i-1},X_{i+1},...,X_n\rangle)=\delta_i^*(\xi_i\otimes 1)$. Since by the second formula in \cite[Proposition 4.1]{Vo98} (applied to $a=\xi_i$ and $\eta=1\otimes 1$) we have that $\delta_i^*(\xi_i\otimes 1)=\xi_i^2-(\tau\otimes\text{id})(\bar{\delta}_i(\xi_i))$, we conclude that $\mathscr J_2(X_i:\mathbb C\langle X_1,...,X_{i-1},X_{i+1},...,X_n\rangle)\in M$, for all $i\in\{1,...,n\}$. This proves our claim.

Since $\Phi^*(X_1,...,X_n)=\sum_{i=1}^n\|\delta_i^*(1\otimes 1)\|_2^2<\infty$, \cite[Theorem 13]{Da08} implies that $M$ is a II$_1$ factor without property Gamma. Moreover, since the first and second order conjugate variables are bounded,  $S=\{X_1,...,X_n\}$ is a non-Gamma set for $M$ (see  \cite[Lemma 9 and 10, and Remark 11]{Da08}).  Lemma \ref{non-amen} gives that $S$ is a non-amenability set for $M$. 

 Let $Q$ be either $\mathbb C1$ or a II$_1$ factor. Our aim is to show that $M\bar{\otimes}Q$ does not have a Cartan subalgebra.
Assume by contradiction that  there is a Cartan subalgebra $A\subset M\bar{\otimes}Q$.
Denote $\tilde M=M*L(\mathbb F_{\infty})$.
 Let $\alpha_t:M\rightarrow \tilde M$ be the $*$-homomorphisms provided by Theorem \ref{free}.  We extend $\alpha_t$ to a $*$-homomorphism $\alpha_t:M\bar{\otimes}Q\rightarrow\tilde M\bar{\otimes}Q$ by letting ${\alpha_t}_{|Q}=\text{id}_Q$. 
 
 The rest of the proof is divided between three claims.
 \vskip 0.05in
 {\bf Claim 1.} Given $t\geqslant 0$, there exist projections $p_t,q_t,r_t\in\mathcal Z(\alpha_t(M)'\cap\tilde M)$ satisfying $p_t+q_t+r_t=1$,
 \begin{enumerate}

 \item  $u_t\alpha_t(M)p_tu_t^*\subset M$, for some unitary $u_t\in\tilde M$,
 \item $v_t\alpha_t(M)q_tv_t^*\subset L(\mathbb F_{\infty})$, for some unitary $v_t\in\tilde M$, 
 \item $\alpha_t(M)r_t\nprec_{\tilde M}M$ and $\alpha_t(M)r_t\nprec_{\tilde M}L(\mathbb F_{\infty})$.
 \end{enumerate}
 
 {\it Proof of Claim 1.} Since $M$ and $L(\mathbb F_{\infty})$ are factors, the sets of projections satisfying (1) and (2) are closed  taking supremum. Let $p_t,q_t\in\mathcal Z(\alpha_t(M)'\cap\tilde M)$ be the maximal projections satisfying (1) and (2).    \cite[Theorem 1.2.1]{IPP05} implies that $p_tq_t=0$. Let $r_t=1-p_t-q_t$. 
If  $\alpha_t(M)r_t\prec_{\tilde M}M$, then, as $M$ is a factor, the proof of \cite[Theorem 5.1]{IPP05} yields a non-zero projection $r\in\mathcal Z(\alpha_t(M)'\cap\tilde M)r_t$ and a unitary $u\in\tilde M$ such that $u\alpha_t(M)ru^*\subset M$. Since $r\leqslant r_t$, this contradicts the maximality of $p_t$. Similarly, $\alpha_t(M)r_t\prec_{\tilde M}L(\mathbb F_{\infty})$ contradicts the maximality of $q_t$.\hfill$\square$

\vskip 0.05in
{\bf Claim 2.} $r_t=0$, for all $t\geqslant 0$.

{\it Proof of Claim 2.} Suppose by contradiction that $r_t\not=0$, for some $t\geqslant 0$.

Since $M$ is a non-amenable II$_1$ factor, by using (3) from Claim 1 and applying \cite[Theorem 6.3]{Io12a} to the inclusion $\alpha_t(M)r_t\subset\tilde M=M*L(\mathbb F_{\infty})$ we get that $(\alpha_t(M)r_t)'\cap (r_tMr_t)^{\omega}\prec_{\tilde M^{\omega}}\mathbb C1$.
 Thus, there exists a non-zero projection $r_t'\in\mathcal Z((\alpha_t(M)r_t)'\cap (r_t\tilde Mr_t)^{\omega})=\mathcal Z(\alpha_t(M)'\cap\tilde M^{\omega})r_t$ such that $r_t'(\alpha_t(M)'\cap\tilde M^{\omega})r_t'$ is completely atomic. By \cite[Lemma 2.7]{Io12a} it follows that $r_t'\in\tilde M$ and moreover $r_t'(\alpha_t(M)'\cap\tilde M^{\omega})r_t'=r_t'(\alpha_t(M)'\cap\tilde M)r_t'$. 
 
 By combining these facts we deduce that there exists a non-zero projection $r_t''\in \alpha_t(M)'\cap\tilde M$ such that $r_t''\leqslant r_t'$ and $r_t''(\alpha_t(M)'\cap\tilde M^{\omega})r_t''=\mathbb Cr_t''$. Thus, we have $(\alpha_t(M)r_t'')'\cap (r_t''\tilde Mr_t'')^{\omega}=\mathbb Cr_t''$.
 
Next, we denote still by $r_t''$ the projection $r_t''\otimes 1\in \tilde M\bar{\otimes}Q$. We define $A_1=\alpha_t(A)r_t''$ and note that $P_1:=\mathcal N_{r_t''(M\bar{\otimes}Q)r_t''}(A_1)''$ contains $(\alpha_t(M)\bar{\otimes}Q)r_t''$. In particular, $P_1$ contains $P_0:=\alpha_t(M)r_t''\otimes 1$. Since  $(\alpha_t(M)r_t'')'\cap (r_t''\tilde Mr_t'')^{\omega}=\mathbb Cr_t''$, by applying Theorem \ref{general} we get that one of the following conditions holds:
(a) $A_1\prec_{\tilde M\bar{\otimes}Q}1\otimes Q$, 
(b) $P_1\prec_{\tilde M}M$ or $P_1\prec_{\tilde M}L(\mathbb F_{\infty})$, or
(c) $P_0$ is amenable.

Now, it is easy to see that (a) implies that $A\prec_{M\bar{\otimes}Q}1\otimes Q$. By taking relative commutants (see e.g. \cite[Lemma 3.5]{Va08}) it follows that $M\otimes 1\prec_{M\bar{\otimes Q}}A$. Since $M$ is non-amenable while $A$ is abelian, this is a contradiction.
By using (3) from Claim 1 and the fact that $M$ is non-amenable, we get that (b) and (c) cannot hold either. This altogether provides the desired contradiction.
\hfill$\square$

\vskip 0.05in
{\bf Claim 3.} $\tau(q_t)\rightarrow 0$, as $t\rightarrow 0$.

{\it Proof of Claim 3.} Since the $M$-$M$ bimodule $L^2(\langle\tilde M,e_{L(\mathbb F_{\infty})}\rangle)$ is isomorphic to $(L^2(M)\bar{\otimes}L^2(M))^{\oplus\infty}$ and $M$ is non-amenable, we get that $M$ is not amenable relative to $L(\mathbb F_{\infty})$ inside $\tilde M$. Since $M$ is a factor, we also have that $M'\cap\tilde M=\mathbb C1$ (see \cite[Remark 6.3]{Po83}). By \cite[Corollary 2.3]{OP07} we derive that for any net of vectors $\xi_n\in L^2(\langle\tilde M,e_{L(\mathbb F_{\infty})}\rangle)$ satisfying $\|x\xi_n\|_2\leqslant \|x\|_2$, for all $x\in \tilde M$, and $\|y\xi_n-\xi_n y\|_2\rightarrow 0$, for all $y\in M$, we must have that $\|\xi_n\|_2\rightarrow 0$.

Let $\xi_t=v_t^*e_{L(\mathbb F_{\infty})}v_tq_t\in L^2(\langle\tilde M,e_{L(\mathbb F_{\infty})}\rangle)$, for any $t\geqslant 0$. Since $v_t\alpha_t(M)q_tv_t^*\subset L(\mathbb F_{\infty})$, we get that $\alpha_t(y)\xi_t=\xi_t\alpha_t(y)$, for all $y\in M$. Also, $\|x\xi_t\|_2,\|\xi_t x\|_2\leqslant \|x\|_2$, for all $x\in\tilde M$ and every $t\geqslant 0$. Thus, if $y\in M$, then  $\|y\xi_t-\xi_ty\|_2\leqslant 2\|y-\alpha_t(y)\|_2$. Since $\|\alpha_t(y)-y\|_2\rightarrow 0$, by using the previous paragraph we conclude that $\|\xi_t\|_2\rightarrow 0$, as $t\rightarrow 0$.
  Since $\|\xi_t\|_2^2=\tau(q_t)$, we are done.\hfill$\square$
  
  We are now ready to derive a contradiction. Recall that $S$ is a non-amenability set for $M$ and the $M$-$M$ bimodule $L^2(\tilde M)\ominus L^2(M)$ is isomorphic to $(L^2(M)\bar{\otimes}L^2(M))^{\oplus\infty}$.  Since $u_{t}\alpha_{t}(M)p_{t}u_{t}^*\subset M$, Lemma \ref{uniform} (2) implies that we can find $C>0$ such that for all $t\geqslant 0$ we have 
  \begin{equation}\label{der} \|(\alpha_t(x)-E_M(\alpha_t(x)))p_t\|_2\leqslant C\sum_{y\in S}\|(\alpha_t(y)-y)p_t\|_2,\;\;\text{for all}\;\; x\in (M)_1.\end{equation}
  
  Now, let $M_0$ be the $*$-algebra generated by $\{X_1,...,X_n\}$ and fix $x\in M_0$ with $\|x\|\leqslant 1$. Theorem \ref{free} gives that $\|\frac{1}{\sqrt{t}}(\alpha_t(x)-x)-\sum_{i=1}^n\delta_i(x)\#S_{i}^{(t)}\|_2\rightarrow 0$.  Note also that $E_M(\delta_i(x)\#S_{i}^{(t)})=0$ and $\|\sum_{i=1}^n\delta_i(x)\#S_{i}^{(t)}\|_2=\sqrt{\sum_{i=1}^n\|\delta_i(x)\|_2^2}=\|\delta(x)\|_2$.  Claims 2 and 3 together imply that $\|p_t-1\|_2\rightarrow 0$. By combining all of these facts and \ref{der} we deduce that $\|\delta(x)\|_2\leqslant C\sum_{y\in S}\|\delta(y)\|_2$. Since $x\in (M_0)_1$ is arbitrary, we deduce that $\delta$ is bounded. As in the proof of Corollary \ref{conjvar}, this leads to a contradiction. \hfill$\square$

\section{Primeness and absence of Cartan subalgebras for regularized algebras}\label{reg}
In this section we establish indecomposibility results for algebras obtained by free additive convolution and liberation. Firstly, for free additive convolution by semicircular variables $\{S_1,...,S_n\}$, we prove that algebras of the form $M_{\varepsilon}=\{X_1+\varepsilon S_1,...,X_n+\varepsilon S_n\}''$ are prime and do not have Cartan subalgebras. More precisely, we have
\begin{theorem}\label{epsilon}
Let $(M,\tau)$ be a tracial von Neumann algebra and $X_1,...,X_n\in M$ be $n\geqslant 2$ self-adjoint elements. Let $\{S_1,...,S_n\}\in L(\mathbb F_n)$ be the canonical semicircular family and $\varepsilon>0$. Denote by $M_{\varepsilon}\subset M*L(\mathbb F_n)$ the von Neumann subalgebra generated by $X_1+\varepsilon S_1,...,X_n+\varepsilon S_n$.

Then $M_{\varepsilon}$ is a non-L$^2$-rigid II$_1$  factor that does not have a Cartan subalgebra.

\end{theorem}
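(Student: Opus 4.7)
The plan is to verify the hypotheses of Theorem \ref{freedef} for the generators $Y_i := X_i + \varepsilon S_i$ of $M_\varepsilon$; this immediately yields that $M_\varepsilon$ is a II$_1$ factor without Cartan subalgebra. Non-$L^2$-rigidity will then follow from Corollary \ref{conjvar}: the first and second conjugate variables of the $Y_i$ will automatically lie in $M_\varepsilon$ once the Lipschitz condition is verified, via the identity $\delta_i^*(\xi_i\otimes 1)=\xi_i^2-(\tau\otimes\mathrm{id})(\bar\delta_i(\xi_i))$ from \cite[Proposition 4.1]{Vo98}.

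First, the $Y_i$ are algebraically free self-adjoint elements: since $\{S_1,\ldots,S_n\}$ is a free semicircular family free from $M$, any nontrivial polynomial relation among the $Y_i$ would, after projecting appropriately, force an impossible relation among the $S_i$. The partial free difference quotients $\delta_i:\mathbb{C}\langle Y_1,\ldots,Y_n\rangle\to L^2(M_\varepsilon)\bar\otimes L^2(M_\varepsilon)$ with $\delta_i(Y_j)=\delta_{ij}\,1\otimes 1$ are therefore well-defined. By Voiculescu's free perturbation formula \cite[Corollary 3.9]{Vo98}, each $1\otimes 1$ lies in $D(\delta_i^*)$ and the first conjugate variables are explicitly
\[
\xi_i \;=\; \delta_i^*(1\otimes 1) \;=\; \tfrac{1}{\varepsilon}\,E_{M_\varepsilon}(S_i) \;\in\; M_\varepsilon,
\]
which is bounded, with $\|\xi_i\|\leq 2/\varepsilon$.

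The main technical point, and the step I view as the hard part, is the Lipschitz condition $\xi_i\in D(\bar\delta)$ with $\bar\delta(\xi_i)\in (M_\varepsilon\bar\otimes M_\varepsilon^{op})^{\oplus n}$. The natural approach is to work inside the ambient free product $\tilde M := M*L(\mathbb{F}_n)$, where one has the closable derivations $\partial_j$ determined by $\partial_j(S_k)=\delta_{jk}\,1\otimes 1$ and $\partial_j|_M=0$, which are manifestly bounded-valued in the coarse bimodule. On $\mathbb{C}\langle Y_1,\ldots,Y_n\rangle$ these satisfy $\partial_j=\varepsilon\,\delta_j$, so differentiating $\xi_i=\frac{1}{\varepsilon}E_{M_\varepsilon}(S_i)$ with respect to $\delta_j$ reduces to computing $\partial_j$ of a conditional expectation inside $\tilde M$. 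A subordination/resolvent argument in the spirit of Voiculescu and Biane, combined with the explicit formula for $\xi_i$, should then express $\bar\delta(\xi_i)$ as a bounded element of $M_\varepsilon\bar\otimes M_\varepsilon^{op}$; these are precisely the kind of estimates carried out in \cite{Da10b}, which is the source invoked for Theorem \ref{free}.

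With all three ingredients in place, Theorem \ref{freedef} yields that $M_\varepsilon$ is a II$_1$ factor which, together with any II$_1$ tensor factor, has no Cartan subalgebra. The Lipschitz condition additionally makes $\mathscr{J}_2(Y_i:\mathbb{C}\langle Y_1,\ldots,\hat Y_i,\ldots,Y_n\rangle)=\xi_i^2-(\tau\otimes\mathrm{id})(\bar\delta_i(\xi_i))$ a bounded element of $M_\varepsilon$, so Corollary \ref{conjvar} applies to $\{Y_1,\ldots,Y_n\}$ and shows that $M_\varepsilon$ is not $L^2$-rigid, completing the proof.
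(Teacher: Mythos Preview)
Your approach has a genuine gap at the step you yourself flag as ``the hard part'': you never actually verify the Lipschitz condition $\xi_i\in D(\bar\delta)$ with $\bar\delta(\xi_i)\in (M_\varepsilon\bar\otimes M_\varepsilon^{op})^{\oplus n}$. The sentence ``a subordination/resolvent argument \ldots\ should then express $\bar\delta(\xi_i)$ as a bounded element'' is a hope, not a proof, and the reference to \cite{Da10b} does not help: that paper \emph{assumes} Lipschitz conjugate variables and builds dilations from them; it does not establish the Lipschitz property for an arbitrary semicircular perturbation of given self-adjoints $X_1,\ldots,X_n$. The formula $\xi_i=\tfrac{1}{\varepsilon}E_{M_\varepsilon}(S_i)$ places $\xi_i$ in $M_\varepsilon$, but there is no reason a priori that a conditional expectation lands in the domain of the closure of the free difference quotient, let alone that its image is bounded. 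Without this, Theorem~\ref{freedef} simply does not apply.

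The paper avoids this difficulty entirely by taking a different route for the Cartan part. From \cite[Corollary 3.9]{Vo98} one gets bounded first and second order conjugate variables, so Corollary~\ref{conjvar} already gives that $M_\varepsilon$ is a non-$L^2$-rigid II$_1$ factor (this part agrees with your plan). For absence of Cartan subalgebras, the paper exploits the fact that $M_\varepsilon$ sits inside an \emph{explicit} free product $\tilde M=M*L(\mathbb F_n)$, so no new dilation is needed: one applies \cite[Corollary 9.1]{Io12a} directly to the inclusion $M_\varepsilon\subset\tilde M$ (after enlarging $M$ to be a factor). This forces a unitary conjugate of $M_\varepsilon$ to lie either in $M$ or in $L(\mathbb F_n)$; strong solidity of $L(\mathbb F_n)$ kills the second option, and an elementary freeness argument using the subalgebras $M_i=(M\cup\{S_i\})''$ shows the conjugating unitary must lie in $\cap_i M_i=M$, giving the contradiction $S_i\in M$. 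The moral is that the ambient free product already provides the deformation-theoretic input, so one can bypass Theorem~\ref{freedef} and its Lipschitz hypothesis altogether.
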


\begin{proof} Fix $\varepsilon>0$.
Then by \cite[Corollary 3.9]{Vo98} we have that $\mathscr J_p (X_i^{\varepsilon}:\mathbb C\langle X_1^{\varepsilon},...,X_{i-1}^{\varepsilon},X_{i+1}^{\varepsilon},...,X_n^{\varepsilon}\rangle)$ exists and belongs to $M_{\varepsilon}$, for all $p\in\{1,2\}$ and $i\in\{1,...,n\}$. By  Corollary \ref{conjvar} it follows that $M_{\varepsilon}$ is a non-$L^2$-rigid II$_1$ factor. 
   
Now, assume by contradiction that $M_{\varepsilon}$ has a Cartan subalgebra and denote $\tilde M=M*L(\mathbb F_n)$.   After enlarging $M$ if necessary (e.g. by replacing it with $M*L(\mathbb F_2)$) we may assume that it is a factor.
 Since $M_{\varepsilon}$ is a non-amenable factor and $M,L(\mathbb F_n)$ are factors, by \cite[Corollary 9.1]{Io12a} we can find unitary elements $u,v\in\tilde M$ and projections $p,q\in\mathcal Z(M_{\varepsilon}'\cap\tilde M)$  such that $uM_{\varepsilon}pu^*\subset M$, $vM_{\varepsilon}qv^*\subset L(\mathbb F_n)$ and $p+q=1$. Since $L(\mathbb F_n)$ is strongly solid \cite{OP07}, we must have that $q=0$, hence $uM_{\varepsilon}u^*\subset M$.

Towards a contradiction, let $i\in\{1,..,n\}$. We define $A_i,M_i$ and $N_i$ to be the von Neumann subalgebras of $\tilde M$ generated by $\{X_i+\varepsilon S_i\}$, $M\cup\{S_i\}$ and $\{S_j\}_{j\in\{1,...,n\}\setminus\{i\}}$, respectively. 
Following \cite[Proposition 4.7]{Vo93}, the distribution of $X_i+\varepsilon S_i$ is absolutely continuous with respect to the Lebesgue measure on $\mathbb R$, hence it has no atoms. This implies that $A_i$ is diffuse.

Since $uA_iu^*\subset M\subset M_i$, $A_i\subset M_i$ and $\tilde M=M_i*N_i$ , by applying \cite{Po83} or \cite[Theorem 1.2.1]{IPP05} we derive that $u\in M_i$.
Since $i\in\{1,...,n\}$ was arbitrary, we conclude that $u\in\cap_{i=1}^nM_i$. Now, freeness easily yields that $\cap_{i=1}^nM_i=M$ and therefore $u\in M$. Thus, we would get that $M_{\varepsilon}\subset M$. This would imply that $S_i\in M$, for all $i\in\{1,...,n\}$, thus giving a contradiction.
\end{proof}

 Our techniques allow us to more generally handle the case of regularization by variables $\{Y_1,...,Y_n\}$ that have bounded first and second order conjugate variables.
  
\begin{theorem}\label{FAC}
Let $(M_1,\tau_1)$, $(M_2,\tau_2)$ be tracial von Neumann algebras and let $M=M_1*M_2$. Let
 $X_1\in M_1\setminus\mathbb C1,X_2,...,X_n\in M_1$ and $Y_1,...,Y_n\in M_2\setminus\mathbb C1$ be self-adjoint elements, for some $n\geqslant 2$.  Denote by $N\subset M$ the von Neumann subalgebra generated by $Z_1=X_1+ Y_1,...,Z_n=X_n+ Y_n$.
 
Assume that $Y_1,...,Y_n$ have bounded first and second order conjugate variables, i.e. we have that $\mathscr J_p (Y_i:\mathbb C\langle Y_1,...,Y_{i-1},Y_{i+1},...,Y_n\rangle)$ exists and belongs to $M_2$, for all $p\in\{1,2\}$ and  $i\in\{1,...,n\}$.
 
Then $N$ is a non-L$^2$-rigid II$_1$  factor that does not have a Cartan subalgebra.

\end{theorem}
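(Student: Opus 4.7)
The plan is to follow the two-step structure of the proof of Theorem \ref{epsilon}: first deduce via Corollary \ref{conjvar} that $N$ is a non-$L^2$-rigid II$_1$ factor, and then rule out Cartan subalgebras by combining \cite[Corollary 9.1]{Io12a} with a Popa--IPP-type argument tailored to the free-additive form $Z_i=X_i+Y_i$.

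\emph{Non-$L^2$-rigid II$_1$ factor.} First I verify the hypotheses of Corollary \ref{conjvar} for $Z_1,\dots,Z_n$. Because $(X_1,\dots,X_n)\subset M_1$ is free from $(Y_1,\dots,Y_n)\subset M_2$ in $M=M_1*M_2$, Voiculescu's free Stam identity (in the form used in the proof of \cite[Corollary 3.9]{Vo98}) gives
\[
\mathscr J_p\bigl(Z_i:\mathbb C\langle Z_1,\dots,\hat Z_i,\dots,Z_n\rangle\bigr) \;=\; E_N\bigl(\mathscr J_p\bigl(Y_i:\mathbb C\langle Y_1,\dots,\hat Y_i,\dots,Y_n\rangle\bigr)\bigr)
\]
for $p\in\{1,2\}$. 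Since the right-hand side is the $N$-conditional expectation of a bounded element of $M_2$, the first and second order conjugate variables of $Z_i$ exist and belong to $N$, so Corollary \ref{conjvar} applies and yields that $N$ is a non-$L^2$-rigid II$_1$ factor (in particular prime and without property Gamma).

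\emph{Absence of Cartan subalgebra.} I argue by contradiction: assume $N$ admits a Cartan subalgebra $A$. After replacing each $M_i$ by $M_i*L(\mathbb F_2)$ (which affects neither $N$ nor the existence of its Cartan) I may assume $M_1$ and $M_2$ are II$_1$ factors. Applying \cite[Corollary 9.1]{Io12a} to the non-amenable II$_1$ subfactor $N\subset M_1*M_2$ with Cartan, I obtain unitaries $u,v\in M$ and central projections $p,q\in\mathcal Z(N'\cap M)$ with $p+q=1$ such that $uNpu^*\subset M_1$ and $vNqv^*\subset M_2$. Both alternatives are to be ruled out using the mixed form of $Z_i$: in Case 1 the subalgebra $A_1=W^*(Z_1)\subset N$ is diffuse (because $Z_1=X_1+Y_1$ is a free additive convolution with $Y_1$, whose distribution has no atoms by the argument from the proof of Corollary \ref{conjvar} and \cite[Proposition 4.7]{Vo93}), and the inclusion $uA_1pu^*\subset M_1$ combined with $Y_1\in M_2\setminus\mathbb C 1$ is to be shown incompatible with the free product structure via \cite[Theorem 1.2.1]{IPP05}; Case 2 is symmetric, based on $X_1\in M_1\setminus\mathbb C 1$.

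The main obstacle is this last step. Unlike Theorem \ref{epsilon}, where strong solidity of $L(\mathbb F_n)$ immediately disposed of one side, here neither $M_1$ nor $M_2$ is strongly solid, and the rigidity must be extracted solely from the algebraic form $Z_i=X_i+Y_i$ combined with freeness of $M_1$ and $M_2$. A cleaner alternative, which I expect is the intended route, is to use the free malleable deformation $\alpha_t:M\to\tilde M=M*L(\mathbb Z)$ of Remark \ref{madef}(1) and to mimic Claims 1--3 from the proof of Theorem \ref{freedef}: the $M$-$M$ bimodule $L^2(\tilde M)\ominus L^2(M)$ is coarse, the set $S=\{Z_1,\dots,Z_n\}$ is a non-amenability set for $N$ (it is non-$\Gamma$ by Lemma \ref{non-amen} and the bounded conjugate variables just produced), and Lemma \ref{uniform}(2) combined with Lemma \ref{estimate} would force the malleable derivation $\delta$ (with $\delta(Z_i)=i(X_i\otimes 1-1\otimes X_i)$) to be $\|\cdot\|_\infty$-bounded on $\mathbb C\langle Z_1,\dots,Z_n\rangle$, contradicting the non-triviality of $X_1$.
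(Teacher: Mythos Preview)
Your first step (non-$L^2$-rigidity via Corollary \ref{conjvar}, using that the conjugate variables of $Z_i$ are the $E_N$-projections of those of $Y_i$) is correct and matches the paper. The second step, however, has two real gaps.

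\textbf{Gap in obtaining boundedness of the derivation.} You propose a single deformation and an appeal to ``Claims 1--3 of Theorem \ref{freedef}.'' Those claims concern $\alpha_t(M)$ inside $M*L(\mathbb F_\infty)$, whereas here the Cartan lives in $N\subsetneq M$ and the deformation acts on $M$; the transfer is not automatic. The paper takes a different and cleaner route: it applies \cite[Corollary 9.1]{Io12a} \emph{once} to $N\subset M_1*M_2$ to get $p_1+p_2=1$ with $u_iNp_iu_i^*\subset M_i$, and then introduces \emph{two} complementary deformations $\alpha_t^{(1)},\alpha_t^{(2)}$ of $M*L(\mathbb Z)$, one fixing each $M_i$. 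Since $\alpha_t^{(i)}(M_i)=M_i\subset M$, Lemma \ref{uniform}(2) bounds $\|\delta_i(x)p_i\|_2$ for $x\in (N\cap M_0)_1$. The punchline you are missing is the algebraic identity $\delta_1(x)+\delta_2(x)=i[x,1\otimes 1]$, which is globally bounded by $2\|x\|$; hence $\|\delta_1(x)p_2\|_2\leqslant 2+\|\delta_2(x)p_2\|_2$ is also bounded, so $\delta_1$ is bounded on all of $N\cap M_0$, and therefore inner: $\delta_1(x)=[x,\xi]$.

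\textbf{The main gap: from inner to $X_1\in\mathbb C1$.} You assert that boundedness of $\delta$ ``contradicts the non-triviality of $X_1$'' without any argument. This is the heart of the proof and is not a one-liner. Writing $\eta=-i\xi$, innerness yields $[Y_j,1\otimes 1]=[Z_j,\eta]$ for all $j$. The paper then proves (its Claim 2) that $\eta\in L^2(M_1)\bar\otimes L^2(M_1)$: using that the distribution of each $Z_j$ is atomless (via \cite{Be06}, since $X_1\notin\mathbb C1$ and $Y_j$ has atomless law), the operator $Z_j\otimes 1-1\otimes Z_j^{op}$ has trivial kernel, so functional calculus gives $\eta\in L^2(M^{(j)})\bar\otimes L^2(M^{(j)})$ where $M^{(j)}=\{M_1,Y_j\}''$; one then needs the non-obvious fact $\bigcap_j L^2(M^{(j)})=L^2(M_1)$, proved via Voiculescu's identity relating $\delta_1$ to the partial difference quotients $\partial_j$. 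Finally, projecting $Y_1\otimes 1-1\otimes Y_1=Z_1\eta-\eta Z_1$ onto $L^2(M)\bar\otimes L^2(M_1)$ (using $E_{M_1}(Y_1)=0$) gives $1\otimes Y_1=\eta Y_1$, hence $\eta=1\otimes 1$ since $Y_1$ is diffuse, and then $X_1\otimes 1=1\otimes X_1$ forces $X_1\in\mathbb C1$. None of this is in your proposal.
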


\begin{proof} 
Without loss of generality, we may assume that $M_1$ and $M_2$ are factors, and that $\tau(Y_i)=0$, for all $i\in\{1,...,n\}$. 
Let $M_0\subset M$ be the $*$-subalgebra generated by $M_1$ and $\{Y_1,...,Y_n\}$. We define two real closable derivations $\delta_1,\delta_2:M_0\rightarrow L^2(M)\bar{\otimes}L^2(M)$, by letting $\delta_1(x)=0$ and $\delta_2(x)=i[x,1\otimes 1]$, if $x\in M_1$, and $\delta_1(y)=i[y,1\otimes 1]$ and $\delta_2(y)=0$, if $y\in M_2$. 

We continue by proving three facts.

Firstly, let $i\in\{1,...,n\}$. Since  $Y_1,...,Y_n$ have bounded first and second order conjugate variables, $Y_i$ has finite free entropy. By \cite[Proposition 4.5]{Vo94}, the distribution of $Y_i$ is absolutely continuous with respect to the Lebesgue measure on $\mathbb R$. In particular, the distribution of $Y_i$ has no atoms.

 Secondly,  by \cite[Proposition 3.7]{Vo98} we have that the conjugate variables
$$\mathscr J_p (Z_i:\mathbb C\langle Z_1,...,Z_{i-1},Z_{i+1},...,Z_n\rangle)=E_N(\mathscr J_p (Y_i:\mathbb C\langle Y_1,...,Y_{i-1},Y_{i+1},...,Y_n\rangle))$$ exist and belong to $N$, for every $p\in\{1,2\}$, $i\in\{1,..,n\}$. By combining  \cite[Remark 11]{Da08} and Lemma \ref{non-amen}
   we get that $F=\{Z_1,...,Z_n\}$ is a non-Gamma, hence a non-amenability set for $N$. 
   
  Thirdly, let us prove that $\cap_{i=1}^nL^2(\{M_1,Y_i\}'')=L^2(M_1).$ 
We start by considering the free difference quotient $\partial_i:M_0\rightarrow L^2(M)\bar{\otimes}L^2(M)$  with respect to $Y_i$ given by $\partial_i(x)=0$, for $x\in M_1$, and $\partial_i(Y_j)=\delta_{i,j}1\otimes 1$, for $j\in\{1,...,n\}$. By    \cite[Proposition 3.6]{Vo98}, $\partial_i^*(1\otimes 1)$ exists and is equal to $\mathscr J_1 (Y_i:\mathbb C\langle Y_1,...,Y_{i-1},Y_{i+1},...,Y_n\rangle))$. By
\cite[Corollary 4.2]{Vo98} we deduce that  $M_0\otimes M_0\subset D(\partial_i^*)$.

Now, let $Z\in \cap_{i=1}^nL^2(\{M_1,Y_i\}'')$. Towards proving that $Z\in L^2(M_1)$, take a sequence $Z_n\in M_0$ such that $\|Z_n-Z\|_2\rightarrow 0$. If $i\in\{1,...,n\}$, then  $Z\in D(\overline{\partial}_i)$ and $\overline{\partial}_i(Z)=0$.  Since $M_0\otimes M_0\subset D(\partial_i^*)$ we deduce that $\langle\partial_i(Z_n),\zeta\rangle\rightarrow 0$, for all $\zeta\in M_0\otimes M_0$.
Also, a variant of an identity due to Voiculescu (see  \cite[equation (1)]{Da08}) gives  that $\delta_1(Z_n)=i\sum_{j=1}^n[\partial_j(Z_n)(X_j\otimes 1)-(1\otimes X_j)\partial_j(Z_n)].$

We therefore conclude that $\langle\delta_1(Z_n),\zeta\rangle\rightarrow 0$, for all $\zeta\in M_0\otimes M_0$. This implies that $Z\in D(\overline{\delta}_1)$ and $\overline{\delta}_1(Z)=0.$ From this it is easy to see that $Z\in L^2(M_1).$
 
 \vskip 0.05in
Suppose that  $N$ either has a Cartan subalgebra or is $L^2$-rigid. Towards getting a contradiction, we  first use the fact that $F$ is contained in the domains of $\delta_1$ and $\delta_2$ to prove the following:
   
\vskip 0.05in
{\bf Claim 1}. There exists $\xi\in L^2(M)\bar{\otimes}L^2(M)$ such that $\delta_1(x)=[x,\xi]$, for all $x\in N\cap M_0$.
\vskip 0.05in
{\it Proof of Claim 1.}
Let $\tilde{M}=M*L(\mathbb Z)$ and $s\in L(\mathbb Z)$ be a generating $(0,1)$ semicircular element. For $t\in\mathbb R$ and $j\in\{1,2\}$, we define $\alpha_t^{(j)}\in\text{Aut}(\tilde M)$ by letting  $\alpha_t^{(j)}(x)=x$, if $x\in M_j$, and  $\alpha_t^{(j)}(y)=\exp(its)y\exp(-its)$, if $y\in M_k*L(\mathbb Z)$, where $k$ is such that $\{j,k\}=\{1,2\}$. Then it is easy to see (e.g., by combining Theorem \ref{dimagen} and Remark \ref{madef}) that  \begin{equation}\label{ddelta} \|\frac{1}{t}(\alpha_t^{(j)}(x)-x)-\delta_j(x)\#s\|_2\rightarrow 0,\;\;\;\text{as}\;\; t\rightarrow 0,\;\;\;\text{for all}\;\;\; x\in M_0.\end{equation}

To get the conclusion, we treat separately the two cases. Asssume first that $N$ has a Cartan subalgebra. 
Since $N$ is a non-amenable factor and $M_1,M_2$ are factors, \cite[Corollary 9.1]{Io12a} implies that we can find unitary elements $u_1,u_2\in M$ and projections $p_1,p_2\in\mathcal Z(N'\cap M)$  such that $u_1Np_1u_1^*\subset M_1$, $u_2Np_2u_2^*\subset M_2$ and $p_1+p_2=1$.

Let $t\in\mathbb R$ and $i\in\{1,2\}$. 
Then $\alpha_t^{(i)}(u_i)\alpha_t^{(i)}(N)\alpha_t^{(i)}(p_i)\alpha_t^{(i)}(u_i)^*\subset\alpha_t^{(i)}(M_i)\subset M$.  Also, the $M$-$M$ bimodule $L^2(\tilde{M})\ominus L^2(M)$ is isomorphic to $(L^2(M)\bar{\otimes}L^2(M))^{\oplus\infty}$. Since $F$ is a non-amenability set for $N$, Lemma \ref{uniform} (2) provides a constant $C>0$ such that for all $x\in (N)_1$  we have \begin{equation}\label{alpha}\|(\alpha_t^{(i)}(x)-E_{M}(\alpha_t^{(i)}(x)))\alpha_t^{(i)}(p_i)\|_2\leqslant C\sum_{y\in F}\|\alpha_t^{(i)}(y)-y\|_2. \end{equation}

Note that $E_{M}(\delta_i(x)\#s)=0$, for all $x\in M_0$. Thus, combining equations \ref{ddelta} and \ref{alpha} yields that $\|\delta_i(x)p_i\|_2\leqslant C\sum_{y\in F}\|\delta(y)\|_2$, for all $x\in (N\cap M_0)_1$.
Since $\delta_1(x)p_2=i[x,1\otimes 1]p_2-\delta_2(x)p_2$, we get that $\|\delta_1(x)p_2\|_2\leqslant 2+ C\sum_{y\in F}\|\delta(y)\|_2$, for all $x\in (N\cap M_0)_1$.  Since $p_1+p_2=1$, it follows that the restriction of $\delta_1$ to $N\cap M_0$ is bounded. The conclusion follows as in the proof of \cite[Theorem 2.2]{Pe04}.

Now, assume that $N$ is $L^2$ rigid.  Note that the restriction of $\delta_1$ to $N\cap M_0$ is a real closable derivation into $L^2(M)\bar{\otimes}L^2(M)$. Since $L^2(M)\bar{\otimes}L^2(M)$ is isomorphic to a $N$-$N$ sub-bimodule of $(L^2(N)\bar{\otimes}L^2(N))^{\oplus\infty}$ and $N\cap M_0$ contains a non-amenability set for $N$,  Theorem \ref{L^2-rigid} implies that $\delta_1$ is bounded on $N\cap M_0$. The conclusion now follows as above. \hfill$\square$

Now, by the definition of $\delta_1$ we have that $\delta_1(Z_j)=i[Y_j,1\otimes 1]$. Denote $\eta=-i\xi\in L^2(M)\bar{\otimes}L^2(M)$. Since Claim 1 yields that $\delta_1(Z_i)=[Z_i,\xi]$, we conclude that \begin{equation}\label{Yi} [Y_i,1\otimes 1]=[Z_i,\eta],\;\;\;\text{for all}\;\;\;i\in\{1,...,n\}.\end{equation}
 
Next, we identify $L^2(M)\bar{\otimes}L^2(M)$ with $L^2(M\bar{\otimes}M^{op})$ in the natural way such that the $M$-$M$ bimodule structure of $L^2(M)\bar{\otimes}L^2(M)$ corresponds to the left multiplication action of $M\bar{\otimes}M^{op}$ on $L^2(M\bar{\otimes}M^{op})$. We can therefore rewrite \ref{Yi} as \begin{equation}\label{yi}Y_i\otimes 1-1\otimes Y_i^{op}=(Z_i\otimes 1-1\otimes {(Z_i)}^{op})\eta,\;\;\;\text{for all}\;\;\; i\in\{1,...,n\}.\end{equation}
\vskip 0.05in
{\bf Claim 2}. $\eta\in L^2(M_1)\bar{\otimes}L^2(M_1)\cong L^2(M_1\bar{\otimes}M_1^{op}).$
\vskip 0.05in
{\it Proof of Claim 2.} Fix $i\in\{1,...,n\}$ and denote by $M^{(i)}=\{M_1,Y_i\}''$ the von Neumann algebra generated by $M_1$ and $Y_i$.
Since $X_i\in M_1\setminus\mathbb C1$ and the distribution of $Y_i$  has no atoms, employing \cite[Theorem 4.1]{Be06} we derive that the distribution of $Z_i=X_i+Y_i$ is absolutely continuous with respect to the Lebesgue measure on $\mathbb R$ and thus has no atoms. This implies that the self-adjoint element $T_i:=Z_i\otimes 1-1\otimes {(Z_i)}^{op}\in M\bar{\otimes} M^{op}$ has no kernel. 

For $\delta>0$, let $A_{\delta}=\{r\in\mathbb R|\hskip 0.01in |r|\geqslant\delta\}$ and $f_{\delta}:\mathbb R\rightarrow\mathbb R$ be the Borel function $f_{\delta}(r)=\frac{1}{r}1_{A_{\delta}}(r)$.
Since $T_i$ has no kernel,  $f_{\delta}(T_i)T_i=1_{A_{\delta}}(T_i)\rightarrow $ id, in the strong operator topology, as $\delta\rightarrow 0$. Thus, by using formula \ref{yi} we derive  that $\|f_{\delta}(T_i)(Y_i\otimes 1-1\otimes Y_i^{op})-\eta\|_2=\|(f_{\delta}(T_i)T_i-1)\eta\|_2\rightarrow 0$, as $\delta\rightarrow 0$. 
Since $Y_i\in M^{(i)}$ and $T_i\in M^{(i)}\bar{\otimes}M^{(i)op}$, we get that $f_{\delta}(T_i)
(Y_i\otimes 1-1\otimes Y_i^{op})\in M^{(i)}\bar{\otimes}M^{(i)op}$.

We deduce that  $\eta\in L^2(M^{(i)}\bar{\otimes}M^{(i)op})\cong L^2(M^{(i)})\bar{\otimes}L^2(M^{(i)})$, for all $i\in\{1,...,n\}$. Since we proved that  $\cap_{i=1}^nL^2(M^{(i)})=L^2(M_1)$, we conclude that $\eta\in L^2(M_1)\bar{\otimes}L^2(M_1)$, as claimed.\hfill$\square$

Let $P$ be the orthogonal projection from $L^2(M)\bar{\otimes}L^2(M)$ onto $L^2(M)\bar{\otimes}L^2(M_1)$. 
Equation \ref{Yi}  gives that $Y_1\otimes 1-1\otimes Y_1=Z_1\eta-\eta Z_1$. Since $E_{M_1}(Y_1)=0$ and $\eta\in L^2(M_1)\bar{\otimes}L^2(M_1)$, by applying $P$ to the last identity, we deduce that 
$Y_1\otimes 1=Z_1\eta-\eta X_1$. Hence $1\otimes Y_1= \eta Y_1$. Since $Y_1$ is diffuse, this implies that $\eta=1\otimes 1$ and further that $X_1\otimes 1=1\otimes X_1$.
Thus, $X_1\in\mathbb C1$, which is the desired contradiction.\end{proof}

Finally, we prove an indecomposability result for  regularized algebras obtained by liberation in the sense of \cite[Section 2]{Vo99}. 

\begin{theorem}\label{liberation}
Let $(M_1,\tau_1)$, $(M_2,\tau_2)$ be tracial von Neumann algebras and  $M=M_1*M_2$.  Let $A_1,...,A_n\subset M_1$ be diffuse von Neumann subalgebras and $u_1,...,u_n\in M_2$ be unitary elements, for some $n\geqslant 2$. Denote by $N\subset M$ the von Neumann subalgebra generated by $u_1A_1u_1^*,...,u_nA_nu_n^*$ .

Assume that $A_1$ is a non-amenable II$_1$ factor and that $u_2\not\in\mathbb Cu_1$.

Then $N$ is a non-L$^2$-rigid II$_1$  factor that does not have a Cartan subalgebra.
\end{theorem}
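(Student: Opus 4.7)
The plan is to adapt the strategy of Theorem \ref{FAC} to the liberation setting. Working in $M = M_1 * M_2$, I introduce the two free-product derivations $\delta_1, \delta_2 \colon M_0 \to L^2(M) \bar{\otimes} L^2(M)$ on the $*$-algebra $M_0$ generated by $M_1 \cup M_2$, defined by $\delta_j \equiv 0$ on $M_j$ and $\delta_j(y) = i[y, 1 \otimes 1]$ for $y \in M_k$ with $k \ne j$. Let $N_0 \subset N \cap M_0$ be the weakly dense $*$-subalgebra of $N$ generated by $B_1, \dots, B_n$. Since $A_1$ is a non-amenable II$_1$ factor, so is $B_1 \cong A_1$, and it therefore admits a non-amenability set $S \subset B_1 \subset N_0$; because the restriction of the coarse $N$-bimodule to $B_1$ is a multiple of the coarse $B_1$-bimodule, $S$ is also a non-amenability set for $N$.

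To establish factoriality, after replacing each $u_j$ by $u_1^* u_j$ (which preserves the hypothesis $u_2 \notin \mathbb C u_1$), I may assume $u_1 = 1$, so $B_1 = A_1 \subset M_1$ and $\delta_1|_{B_1} \equiv 0$. For $z \in \mathcal Z(N)$, the resolvent $\eta_\alpha = \alpha(\alpha + \bar\delta_1^* \bar\delta_1)^{-1}$ satisfies $\eta_\alpha(bz) = b \eta_\alpha(z)$ and $\eta_\alpha(zb) = \eta_\alpha(z) b$ for $b \in B_1$, so centrality forces $\eta_\alpha(z) \in B_1' \cap M = A_1' \cap M_1 \subset M_1$ by the standard free-product commutant result of \cite{Po83}. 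Passing $\alpha \to \infty$ gives $z \in M_1$, and commutation of $z$ with $B_2 = u_2 A_2 u_2^*$ analogously places $z \in u_2 M_1 u_2^*$. Since $u_2 \notin \mathbb C$, $|\tau(u_2)| < 1$, and a short computation with the free-product conditional expectation $E_{M_1}$ yields $M_1 \cap u_2 M_1 u_2^* = \mathbb C$; hence $z \in \mathbb C$.

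Now assume for contradiction that $N$ is $L^2$-rigid or admits a Cartan subalgebra. The goal is to mimic Claim 1 of Theorem \ref{FAC} to produce $\eta \in L^2(M) \bar{\otimes} L^2(M)$ with $\delta_2(x) = [x, \eta]$ for all $x \in N \cap M_0$. In the Cartan case, \cite[Corollary 9.1]{Io12a} supplies projections $p_1, p_2 \in \mathcal Z(N' \cap M)$ summing to $1$ and unitaries conjugating $Np_j$ into $M_j$; Lemma \ref{uniform}(2) combined with Lemma \ref{estimate} along the malleable deformations $\alpha_t^{(j)}$ induced by $\delta_j$ bounds $\|\delta_j(x) p_j\|_2$ by $C \sum_{y \in S} \|\delta_j(y)\|_2$, and the identity $\delta_1 + \delta_2 = i[\,\cdot\,, 1 \otimes 1]$ then transfers this into boundedness of $\delta_2$ on $N_0$. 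In the $L^2$-rigid case, $\delta_2|_{N_0}$ is a closable real derivation into a sub-bimodule of a multiple of the coarse $N$-bimodule, so Theorem \ref{L^2-rigid} forces it to be bounded. In both cases one extracts the desired $\eta$.

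Finally, for $y_j = u_j a u_j^* \in B_j$ with $a \in A_j$, Leibniz combined with $\delta_2 \equiv 0$ on $M_2$ and $\delta_2|_{M_1} = i[\,\cdot\,, 1 \otimes 1]$ gives $\delta_2(y_j) = i\, u_j [a, 1 \otimes 1] u_j^*$. Equating this to $[y_j, \eta]$ and conjugating by $u_j$ yields $[a, \, u_j^* \eta u_j - i(1 \otimes 1)] = 0$ for every $a \in A_j$. Since $A_j$ is diffuse, it contains a self-adjoint element $a$ with diffuse spectral distribution; on $L^2(M) \bar{\otimes} L^2(M) \cong L^2(M \bar{\otimes} M^{\mathrm{op}})$ the operator $a \otimes 1 - 1 \otimes a^{\mathrm{op}}$ then has trivial kernel, so $u_j^* \eta u_j = i(1 \otimes 1)$ and $\eta = i(u_j \otimes u_j^*)$ for every $j$. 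Equating the cases $j = 1$ and $j = 2$ forces $u_1 \otimes u_1^* = u_2 \otimes u_2^*$ as unit vectors in the coarse bimodule; taking inner products gives $|\tau(u_2^* u_1)| = 1$, and Cauchy--Schwarz then forces $u_1 \in \mathbb C u_2$, contradicting the hypothesis. I expect the main technical hurdle to be the faithful reproduction of Claim 1 of Theorem \ref{FAC} in our setting, where the generating algebra $N_0$ of $N$ does not coincide with the ambient $M_0$; the rest of the argument is a clean algebraic manipulation built on the diffuseness of the $A_j$'s and on $u_2 \notin \mathbb C u_1$.
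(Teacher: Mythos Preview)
Your proof is correct and follows essentially the same strategy as the paper: invoke Claim~1 of Theorem~\ref{FAC} to make one of the free-product derivations inner on $N\cap M_0$, then use diffuseness of a single generator to pin down the implementing vector and force $u_1\otimes u_1^*=u_2\otimes u_2^*$. The paper differs only cosmetically---it works with $\delta_1$ rather than $\delta_2$ (obtaining $\xi=i(1\otimes 1-u_j\otimes u_j^*)$), and its factoriality argument is shorter: for $z\in\mathcal Z(N)$ one has directly $u_j^*zu_j\in A_j'\cap M\subset M_1$ by \cite{Po83}, so $z\in u_1M_1u_1^*\cap u_2M_1u_2^*=\mathbb C1$, with no need for the resolvent $\eta_\alpha$.
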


\begin{proof} Let us first show that $N$ is a factor. Let $x\in\mathcal Z(N)$. Then $[u_1^*xu_1,A_1]=[u_2^*xu_2,A_2]=0$. Since $A_1,A_2\subset M_1$ are diffuse,  applying \cite{Po83}  or \cite[Theorem 1.2.1]{IPP05}  gives that $u_1^*xu_1$ and $u_2^*xu_2$ belong to $M_1$. Equivalently, $x\in u_1M_1u_1^*\cap u_2M_1u_2^*$.
Since $u_1,u_2\in M_2$ and $u_2\not\in\mathbb Cu_1$, we get that $x\in\mathbb C1$.

Next, let  $M_0\subset M$ be the $*$-subalgebra generated by $M_1$ and $M_2$. Consider the  real closable derivation $\delta_1:M_0\rightarrow L^2(M)\bar{\otimes}L^2(M)$ by letting $\delta_1(x)=0$, if $x\in M_1$, and $\delta_1(y)=i[y,1\otimes 1]$, if $y\in M_2$. Since $A_1$ is a non-amenable II$_1$ factor, \cite{Co76} implies that there exists a non-amenability set $F\subset A_1$. Thus, $u_1Fu_1^*$ is a non-amenability set for $N$ which is contained in the domain of $\delta_1$.

If $N$ is either $L^2$-rigid or has a Cartan subalgebra, then the proof of Theorem \ref{FAC} implies that there  exists $\xi\in L^2(M)\bar{\otimes}L^2(M)$ such that $\delta_1(x)=[x,\xi]$, for all $x\in N\cap M_0$.

To get a contradiction, let $j\in\{1,...,n\}$.
Since $A_j$ is diffuse, we can find a self-adjoint element $X_j\in A_j$ which generates a diffuse algebra. If we define $Z_j=u_jX_ju_j^*$ then we have  $$[Z_j,\xi]=\delta_1(u_jX_ju_j^*)=i([u_j,1\otimes 1]X_ju_j^*+u_jX_j[u_j^*,1\otimes 1])=i[Z_j,1\otimes 1-u_j\otimes u_j^*].$$

Since $Z_j$ is diffuse, we deduce that $\xi=i(1\otimes 1-u_j\otimes u_j^*)$ for all $j\in\{1,...,n\}$. This implies in particular that $u_1\otimes u_1^*=u_2\otimes u_2^*$, and hence that $u_2\in\mathbb Cu_1$, which gives a contradiction.
\end{proof}

\section{Algebraic derivations and absence of Cartan subalgebras}

The main goal of this section is to establish a general result showing absence of Cartan subalgebras for any II$_1$ factor $M$ admitting certain unbounded ``algebraic" derivations.
 More precisely:

\begin{theorem}\label{nocartan} Let $M$ be a II$_1$ factor, $B\subset M$ be a von Neumann subalgebra and $M_0\subset M$ be a weakly dense $*$-subalgebra, such that $M_0$ contains a non-amenability set for $M$ relative to $B$.  
Assume that for any non-zero projection $r\in B'\cap M$, there exists a mixing $B$-$B$ sub-bimodule $\mathcal H$ of $L^2(M)$ such that $r\mathcal Hr\not=\{0\}$.
Let $D(\delta)\subset M$ be a $*$-subalgebra which contains $M_0$ and $B$.

Assume that   there exists a real derivation
$\delta:D(\delta)\rightarrow L^2(\langle M,e_B\rangle)$ such that  $\delta_{|M_0}$ is unbounded, $\delta^*(e_B)$ exists and belongs to $M_0$, and $\delta(b)=0$, for all $b\in B$.

Also, suppose that $M_0$ is finitely generated and $\delta(M_0)\subset\;\text{span}\; M_0e_BM_0$. More generally, suppose that  $M_0=\cup_{n\geqslant 1}M_n$, where $M_n$ is a finitely generated $*$-subalgebra such that $M_n\subset M_{n+1}$ and  $\delta(M_n)\subset\;\text{span}\; M_ne_BM_n$, for all $n\geqslant 1$.

Then $M$  has no Cartan subalgebra and does not have property Gamma. 

\end{theorem}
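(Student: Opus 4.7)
The plan is to derive a contradiction with the unboundedness of $\delta_{|M_0}$ under the assumption that either $M$ has a Cartan subalgebra or property Gamma, by using a free dilation to transport the problem into an amalgamated free product where spectral gap techniques apply.

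The first step is to build the dilation. Applying Proposition \ref{dimagen} to $\delta$ yields the AFP $\tilde M = M *_B (B \bar\otimes L(\mathbb Z))$ together with a one-parameter group $\{\alpha_t\}_{t\in\mathbb R}\subset\mathrm{Aut}(\tilde M)$ such that $\|\tfrac{1}{t}(\alpha_t(x) - x) - \delta(x)\# s\|_2 \to 0$ for every $x \in M_0$, where $s \in L(\mathbb Z)$ is the semicircular generator and $\#$ denotes the $M$-$M$ bimodule isomorphism $L^2(\langle M,e_B\rangle) \to \overline{\mathrm{span}(MsM)}$ sending $e_B$ to $s$. The AFP structure of $\tilde M$ moreover realises $L^2(\tilde M) \ominus L^2(M)$ as an $M$-$M$ sub-bimodule of $L^2(M) \otimes_B \mathcal K$ for some $B$-$M$ bimodule $\mathcal K$, which will later allow Lemma \ref{uniform} (2) to be invoked with $(P,Q) = (M,B)$.

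Assume by contradiction that $M$ has a Cartan subalgebra $A$; the property Gamma case is handled by the same endgame, with the uniform convergence of the semigroup $\phi_t = E_M\circ\alpha_t$ (via $L^2$-rigidity type arguments) yielding a unitary $u_t\in\tilde M$ with $\alpha_t(M)\subset u_tMu_t^*$ directly, in place of the decomposition below. Following Claims 1--3 of the proof of Theorem \ref{freedef}, for each small $t>0$ decompose $1 = p_t + q_t + r_t \in \mathcal Z(\alpha_t(M)'\cap\tilde M)$ maximally so that $u_t\alpha_t(M)p_tu_t^*\subset M$ and $v_t\alpha_t(M)q_tv_t^*\subset B\bar\otimes L(\mathbb Z)$ for unitaries $u_t,v_t\in\tilde M$, while $\alpha_t(M)r_t$ intertwines into neither. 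Apply Theorem \ref{general} with $\mathcal U = \mathcal N_M(A)$ and $\rho_1 = \alpha_t$: the absence of property Gamma (together with \cite[Lemma 2.7]{Io12a}) lets one refine $r_t$ so that the central-sequence hypothesis $P_0' \cap \tilde M^\omega = \mathbb C$ holds, and the three alternatives of the dichotomy are then refuted: alternative (1), $\alpha_t(A)r_t \prec_{\tilde M} B$, is ruled out using the mixing $B$-$B$ sub-bimodule assumption on $L^2(M)$, which forces any intertwining of the Cartan $A$'s corner into $B$ to propagate through the normalizer $\mathcal N_M(A)''= M$, contradicting $M\neq B$; alternative (2) is incompatible with the maximal choice of $r_t$; alternative (3), relative amenability of $\alpha_t(M)r_t$ with respect to $B$, is excluded by $S\subset M_0$ being a non-amenability set for $M$ relative to $B$. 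Hence $r_t = 0$. Next, $\tau(q_t)\to 0$ is obtained exactly as in Claim 3 of Theorem \ref{freedef}: the vector $\xi_t = v_t^*e_{B\bar\otimes L(\mathbb Z)}v_tq_t\in L^2(\langle\tilde M,e_{B\bar\otimes L(\mathbb Z)}\rangle)$ satisfies $\|\xi_t\|_2^2 = \tau(q_t)$, is $\alpha_t(M)$-central and hence asymptotically $M$-central, and the mixing $B$-$B$ bimodule hypothesis on $L^2(M)$ provides the spectral gap estimate on $L^2(\langle\tilde M, e_{B\bar\otimes L(\mathbb Z)}\rangle)$ needed to conclude $\|\xi_t\|_2\to 0$.

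With $\tau(p_t)\to 1$, Lemma \ref{uniform} (2), applied with $P = M$, $Q = B$ and the non-amenability set $S$, yields a constant $C>0$ such that $\|(\alpha_t(x) - E_M(\alpha_t(x)))p_t\|_2 \leq C\sum_{y\in S}\|(\alpha_t(y) - y)p_t\|_2$ for all $x\in (M)_1$ and small $t>0$. For $x\in M_0$, divide by $t$ and let $t\to 0$: using the asymptotic $\tfrac{1}{t}(\alpha_t(x)-x)\to\delta(x)\# s$ from Proposition \ref{dimagen}, the identity $E_M(\delta(x)\# s) = 0$ (since $s$ is free from $M$ over $B$), the isometry of $\#$, and $p_t\to 1$ strongly, the estimate becomes $\|\delta(x)\|_2 \leq C\sum_{y\in S}\|\delta(y)\|_2$ for every $x\in (M_0)_1$. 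Thus $\delta_{|M_0}$ is bounded, which is the desired contradiction. The hardest point is controlling $\tau(q_t)\to 0$ (together with the related exclusion of alternative (1)): both hinge on deploying the mixing $B$-$B$ sub-bimodule hypothesis to substitute for the stronger—and unavailable—non-amenability of $M$ relative to $B\bar\otimes L(\mathbb Z)$ inside $\tilde M$.
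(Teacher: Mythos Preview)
Your dilation setup and the endgame (Lemma \ref{uniform} (2) combined with the asymptotic $\tfrac{1}{t}(\alpha_t(x)-x)\to\delta(x)\# s$ to force boundedness of $\delta_{|M_0}$) are correct, but the middle of the argument has two genuine gaps.

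First, your treatment of property Gamma does not work. You invoke ``$L^2$-rigidity type arguments'' to obtain uniform convergence of $\phi_t = E_M\circ\alpha_t$, but $L^2$-rigidity in the sense of \cite{Pe06} concerns only derivations into multiples of the coarse bimodule $L^2(M)\bar\otimes L^2(M)$, whereas here $\delta$ lands in $L^2(\langle M,e_B\rangle)$; property Gamma gives no a priori uniform convergence for this $\phi_t$ (which, for the $\alpha_t$ of Proposition \ref{dimagen}, is not even a semigroup). The paper's route is quite different: it first establishes that $\alpha_t(M)\nprec_{\tilde M} B\bar\otimes L(\mathbb Z)$ for all $t$ and $\alpha_t(M)\nprec_{\tilde M} M$ for small $t$ (the latter by the very boundedness contradiction you describe), and \emph{then} rules out Gamma by applying \cite[Theorem 6.3]{Io12a} to $\alpha_t(M)\subset\tilde M$, using Proposition \ref{mixing} (2) to exclude $M'\cap M^\omega\prec_{\tilde M^\omega} B^\omega$. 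This is where the mixing $B$-$B$ sub-bimodule hypothesis actually enters, not as a spectral-gap substitute.

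Second, your claim $\tau(q_t)\to 0$ is not justified. In Theorem \ref{freedef} the analogous step worked because $L^2(\langle\tilde M,e_{L(\mathbb F_\infty)}\rangle)$ is a multiple of the coarse $M$-$M$ bimodule, so non-amenability of $M$ gave spectral gap directly; here $L^2(\langle\tilde M,e_{B\bar\otimes L(\mathbb Z)}\rangle)$ has no such description, and the mixing hypothesis on $L^2(M)$ does not produce a spectral-gap estimate on it. The paper avoids this entirely: instead of showing $\tau(q_t)\to 0$, it proves $\alpha_t(M)\nprec_{\tilde M} B\bar\otimes L(\mathbb Z)$ outright via transitivity of relative amenability. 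Indeed, if $\alpha_t(M)\prec_{\tilde M} B\bar\otimes L(\mathbb Z)$ then (since $\alpha_t(M)'\cap\tilde M=\mathbb C1$) $\alpha_t(M)$ is amenable relative to $B\bar\otimes L(\mathbb Z)$; but $B\bar\otimes L(\mathbb Z)$ is amenable relative to $B$, so by \cite[Proposition 2.4 (3)]{OP07} $\alpha_t(M)$ would be amenable relative to $B$, contradicting the non-amenability set $S$. With this in hand (and once no-Gamma is established so that $\alpha_t(M)'\cap\tilde M^\omega=\mathbb C1$), Theorem \ref{general} applies to the full $\alpha_t(M)$ without any $p_t,q_t,r_t$ decomposition, and alternative (1) is excluded via Proposition \ref{mixing} (1).
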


The mixingness condition was inspired by \cite[Corollary C]{Ho12}, where it is shown that if an orthogonal representation $\pi:G\rightarrow\mathcal O(H_{\mathbb R})$ contains a mixing subrepresentation, then   the II$_1$ factor $\Gamma(H_{\mathbb R})''\rtimes G$ associated to the corresponding free Bogoljubov action  has no Cartan subalgebra.

Before proceeding to the proof of Theorem \ref{nocartan}, let us derive several consequences of it. Firstly, note that Corollary \ref{cartan} corresponds precisely to the case $B=\mathbb C1$. Secondly, let us deduce Corollary \ref{amalgam}.

\subsection{Proof of Corollary \ref{amalgam}} 
Recall that $M=M_1*_{B}M_2$. 
Let $D(\delta)$ be the $*$-algebra generated by $M_1$ and $M_2$ and define $\delta:D(\delta)\rightarrow L^2(\langle M,e_B\rangle)$ by letting $\delta(x)=i[x,e_B]$, if $x\in M_1$, and $\delta(x)=0$, if $x\in M_2$. Then it is easy to see that $\delta$ is a real derivation and $\delta^*(e_B)=0$. 

Let $M_{1,n}\subset M_1$ and $M_{2,n}\subset M_2$ be increasing sequences of finitely generated $*$-subalgebras such that $M_{1,0}=\cup_{n\geqslant 1}M_{1,n}$ is weakly dense in $M_1$ and $M_{2,0}=\cup_{n\geqslant 1}M_{2,n}$ is weakly dense in $M_2$. Assume that $u\in M_{1,1}$ and $v\in M_{2,1}$. Denote by $M_n$ the algebra generated by $M_{1,n}$ and $M_{2,n}$. Then $M_0=\cup_{n\geqslant 1}M_n$ is weakly dense in $M$ and  $\delta(M_n)\subset\;\text{span}\; M_ne_BM_n$, for all $n\geqslant 1$. Since $u,v$ are unitaries, $u\in M_1\ominus B$ and $v\in M_2\ominus B$, it follows that $\|\delta((uv)^n)\|_2=\sqrt{2n}$, for all $n\geqslant 1$. This shows that $\delta_{|M_0}$ is unbounded. Thus, $\delta$ satisfies all the assumptions required in Theorem \ref{nocartan}. 

We continue by verifying the rest of the  assumptions from Theorem \ref{nocartan}.

We first claim that $M$ is a factor and does not have property Gamma. Let $x\in M'\cap M^{\omega}$.  Since $E_B(u)=E_B(v)=E_B(w)=E_B(w^*v)=0$, \cite[Lemma 6.1]{Io12a} gives  that $M'\cap M^{\omega}\subset B^{\omega}$, and thus  $x\in B^{\omega}$. Recall that $zBz^*\perp B$, for some $z\in\{u,v\}$. Therefore,  $\langle z(x-\tau(x))z^*,x-\tau(x)\rangle=0$. On the other hand, since $x$ commutes with $z$, we have that $z(x-\tau(x))z^*=x-\tau(x)$. Altogether, it follows that $x=\tau(x)\in\mathbb C1$, thereby proving the claim.

Next, using the same argument as in the proof of \cite[Lemma 6.1]{Io12a},  we prove that $S=\{u,v,w\}$ is a non-amenability set for $M$ relative to $B$. For $i\in\{1,2\}$, we denote by $W_i\subset M$ the set of alternating words in $M_1\ominus B$ and $M_2\ominus B$ which start in $M_i\ominus B$, and  by $\mathcal H_i\subset L^2(\langle M,e_B\rangle)$ the $\|.\|_2$-closure of the linear span of $W_ie_BM$. Let $\mathcal H_0\subset L^2(\langle M,e_B\rangle)$ be the $\|.\|_2$-closure of $e_BM$. Then  $L^2(\langle M,e_B\rangle)=\mathcal H_0\oplus \mathcal H_1\oplus \mathcal H_2$.
 For $i\in\{0,1,2\}$, let $e_i$ be the orthogonal projection onto $\mathcal H_i$. 
 
 Notice that if $x\in M_1\ominus B$ and $y\in M_2\ominus B$ then $x\mathcal H_2x^*\subset \mathcal H_1$ and $y\mathcal H_1y^*\subset \mathcal H_2$. Since $E_B(u)=E_B(v)=E_B(w)=E_B(w^*v)=0$, we deduce that \begin{equation}\label{ortho}u\mathcal H_2u^*\subset \mathcal H_1,\;\; v\mathcal H_1v^*\subset \mathcal H_2,\;\; w\mathcal H_1w^*\subset \mathcal H_2\;\;\text{and}\;\; v\mathcal H_1v^*\perp w\mathcal H_1w^*.\end{equation}

Now, let $\xi\in L^2(\langle M,e_B\rangle)$ and denote $C_{\xi}=\sum_{z\in S}\|z\xi z^*-\xi\|_2$. Then equation \ref{ortho} implies that $$\|e_2(u^*\xi u)\|_2\leqslant \|e_1(\xi)\|_2\;\;\text{and}\;\; \|e_1(v^*\xi v)\|_2^2+\|e_1(w^*\xi w)\|_2^2\leqslant \|e_2(\xi)\|_2^2.$$

These inequalities further imply that  $\|e_2(\xi)\|_2-C_{\xi}\leqslant\|e_1(\xi)\|_2$ and $\sqrt{2}(\|e_1(\xi)\|_2-C_{\xi})\leqslant\|e_2(\xi)\|_2$. From this it is easy to derive that $\|e_1(\xi)\|_2\leqslant 6C_{\xi}$ and $\|e_2(\xi)\|_2\leqslant 7C_{\xi}$. Since $u\mathcal H_0u^*\subset \mathcal H_1$, we similarly get that $\|e_0(\xi)\|_2\leqslant \|e_1(\xi)\|_2+C_{\xi}\leqslant 7C_{\xi}$. Altogether, it follows that $\|\xi\|_2\leqslant 20C_{\xi}$, proving that $S$ is indeed a non-amenability set relative to $B$.

Finally,  let $r\in M$ be a non-zero projection. We claim that there exists a mixing $B$-$B$ bimodule $\mathcal H\subset L^2(M)$ such that $r\mathcal Hr\not=\{0\}$. Assume by contradiction that this is false. Let $z_1,z_2\in\{u,v\}$ such that $z_1 Bz_1^*\perp B$ and $\{z_1,z_2\}=\{u,v\}$. Denote $z=z_2z_1$. Then for every $k\geqslant 1$, by using freeness, it is clear that $z^k B {z^k}^*\perp B$. Thus, the $B$-$B$ bimodule $\mathcal H_k=\overline{Bz^kB}^{\|.\|_2}$ is isomorphic to the coarse $B$-$B$ bimodule, $L^2(B)\bar{\otimes} L^2(B)$, and is therefore mixing.

By our assumption we have that $r\mathcal H_kr=\{0\}$, hence $rz^kr=0$, for all $k\geqslant 1$. In particular, we get that $(\frac{1}{n}\sum_{k=1}^n{z^k}^*rz^k)r=0$, for all $n\geqslant 1$. Let $D\subset M$ denote the von Neumann subalgebra generated by $z$. Von Neumann's ergodic theorem implies that $\frac{1}{n}\sum_{k=1}^n{z^k}^*rz^k$ converges in $\|.\|_2$ to $E_{D'\cap M}(r)$, as $n\rightarrow\infty$. Thus, we derive that $E_{D'\cap M}(r)r=0$ and further that $(E_{D'\cap M}(r))^2=0$. Since $r\geqslant 0$ it follows that $E_{D'\cap M}(r)=0$ and  $\tau(r)=\tau(E_{D'\cap M}(r))=0$, hence $r=0$. This provides the desired contradiction.

Altogether, we can apply Theorem \ref{nocartan} and derive the conclusion.\hfill$\square$

In the proof of Theorem \ref{nocartan} we will need the following technical result which says that if $L^2(M)$ contains ``enough" mixing $B$-$B$ bimodules, then no Cartan subalgebra of $M$ can be embedded into $B$. More precisely, we have

\begin{proposition}\label{mixing}
Let $M$ be a II$_1$ factor and $B\subset M$ be a von Neumann subalgebra. Assume that  for any non-zero projection $r\in B'\cap M$, there exists a mixing 
$B$-$B$ sub-bimodule $\mathcal H$ of $L^2(M)$ such that $r\mathcal Hr\not=\{0\}$.

\begin{enumerate}
\item If $A\subset M$ is a Cartan subalgebra, then $A\nprec_{M}B$. 
\vskip 0.05in
\item If $M$ has property $\Gamma$, then $M'\cap M^{\omega}\nprec_{M^{\omega}}B^{\omega}$. Moreover, in this case, let $(N,\tau')$ be any tracial von Neumann algebra containing $B$ such that ${\tau}_{|B}={\tau'}_{|B}$ and denote $\tilde M=M*_{B}N$. Then $M'\cap M^{\omega}\nprec_{\tilde M^{\omega}}B^{\omega}$.
 \end{enumerate}
\end{proposition}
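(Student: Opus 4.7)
Both (1) and (2) proceed by contradiction, via the following principle (an immediate consequence of Definition \ref{mixing}): if $\mathcal H$ is a mixing $B$-$B$ sub-bimodule of $L^2(M)$ and $(a_n)$ is a bounded net in $B$ with $a_n\to 0$ weakly, then $\sup_{x\in(B)_1}|\langle a_n\xi x,\eta\rangle|\to 0$ for all $\xi,\eta\in\mathcal H$, and symmetrically for the right action; in particular $\mathcal H$ contains no nonzero vector centralized by a diffuse subalgebra of $B$.

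For (1), assume $A\prec_MB$. By Theorem \ref{corner} there exist projections $p\in A$, $q\in B$, a nonzero partial isometry $v\in qMp$ with $v^*v=p$, and a unital $*$-homomorphism $\phi\colon pAp\to qBq$ with $\phi(x)v=vx$; put $B_0:=\phi(pAp)\subset qBq$, a diffuse abelian subalgebra, and $e:=vv^*\in B_0'\cap qMq$. Since $A$ is Cartan, $\mathcal N_M(A)''=M$; a maximality argument on orthogonal sums of translates $u_ivw_i$ (with $u_i\in\mathcal N_M(A)$ and $w_i$ partial isometries from $B$) produces a nonzero central projection $r\in\mathcal Z(B'\cap M)$, a diffuse subalgebra $\tilde B_0\subset rBr$ (an ``assembled conjugate'' of $B_0$), and a nonzero vector $\zeta\in rL^2(M)r$ which is $\tilde B_0$-central. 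By hypothesis, a mixing $B$-$B$ sub-bimodule $\mathcal H\subset L^2(M)$ with $r\mathcal Hr\ne0$ exists. Applying the hypothesis to all nonzero subprojections of $r$ forces every nonzero subprojection of $r$ in $B'\cap M$ to lie in the support of some mixing bimodule, which after decomposing $rL^2(M)r$ into its maximal mixing and anti-mixing parts places $\zeta$ in the mixing part. This contradicts the absence of $\tilde B_0$-central vectors in mixing $B$-$B$ bimodules.

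For (2), assume $M'\cap M^\omega\prec_{\tilde M^\omega}B^\omega$. Property Gamma of $M$ yields a Haar unitary $u=(u_n)\in M'\cap M^\omega$; Theorem \ref{corner} gives a nonzero partial isometry $v=(v_n)\in\tilde M^\omega$ and a $*$-homomorphism $\psi$ with $\psi(u)v=vu$. Writing $\psi(u)=(b_n)$ with $b_n\in\mathcal U(B)$ and, after passing to a subsequence, $b_n\to0$ weakly in $B$ (using $\tau(b_n^k)\to 0$ for $k\ne 0$), fix a mixing $B$-$B$ sub-bimodule $\mathcal H\subset L^2(M)$ (hypothesis at $r=1$). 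For $\xi,\eta\in\mathcal H\subset L^2(M)$ the central-sequence property gives $\|u_n\xi-\xi u_n\|_2\to 0$; combined with $\|b_nv_n-v_nu_n\|_2\to 0$ and the mixing principle applied to $(b_n)$, a direct computation of $\langle v_n\xi,\eta v_n\rangle$ (and of $\langle v_n\xi y,\eta v_n\rangle$ for $y\in(B)_1$) shows that these quantities tend to zero; by density of $\mathcal H\cdot B$ in an appropriate sub-bimodule this forces $v_n\to 0$ in $L^2$, contradicting $v\ne 0$. For the ``moreover'' clause, since $\mathcal H\subset L^2(M)\ominus L^2(B)$, every alternating Connes tensor product $L^2(X_{i_1})\otimes_B\cdots\otimes_B\mathcal H\otimes_B\cdots\otimes_B L^2(X_{i_k})$ arising in the natural decomposition of $L^2(\tilde M)\ominus L^2(B)=L^2(M*_BN)\ominus L^2(B)$ inherits a mixing $B$-$B$ bimodule structure from $\mathcal H$, producing a mixing witness inside $L^2(\tilde M)$ itself and allowing the same argument to run at the level of $\tilde M$.

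The main obstacle in (1) is the promotion step: upgrading the local intertwining data $(v,\phi,p,q)$ to a projection $r\in B'\cap M$ supporting simultaneously a $\tilde B_0$-central vector and the mixing bimodule structure required by the hypothesis, using only that $\mathcal N_M(A)''=M$. For the moreover part of (2), the extra work is to verify that mixing $B$-$B$ sub-bimodules of $L^2(M)$ really do generate, via Connes tensoring in the alternating decomposition of $L^2(\tilde M)$, enough mixing $B$-$B$ sub-bimodules of $L^2(\tilde M)$ to carry the argument through with $v_n\in\tilde M$ in place of $v_n\in M$.
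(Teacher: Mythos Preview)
Your sketch for part (1) is too vague to constitute a proof. The ``maximality argument'' producing a $\tilde B_0$-central vector $\zeta\in rL^2(M)r$ is not carried out, the notion of ``assembled conjugate'' is undefined, and there is no general decomposition of $rL^2(M)r$ into ``maximal mixing and anti-mixing parts'' that would place $\zeta$ where you need it. The paper's argument is more direct and avoids these issues entirely: take $r\in B'\cap M$ to be the support of $E_{B'\cap M}(q_0)$ where $q_0=vv^*$, and let $e$ denote the orthogonal projection onto the mixing sub-bimodule $\mathcal H$ with $r\mathcal Hr\neq\{0\}$. For each $u\in\mathcal N_{pMp}(Ap)$ one has the \emph{twisted} commutation $vuv^*\,y=\theta_u(y)\,vuv^*$ for $y\in B_0=\phi(Ap)$, with $\theta_u=\phi\circ\mathrm{Ad}(u)\circ\phi^{-1}$; choosing $y_n\in\mathcal U(B_0)$ with $y_n\to 0$ weakly and using mixing gives $e(vuv^*)=0$. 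Since $A$ is Cartan this yields $e(q_0Mq_0)=\{0\}$, and then a convex-hull averaging over $\mathcal U(B)$ promotes this to $e(rMr)=\{0\}$, contradicting $r\mathcal Hr\neq\{0\}$. Note that the key input is this twisted intertwining for \emph{every} normalizer, not the existence of a single central vector.

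Part (2) has a genuine gap. Your approach via a single Haar unitary $u=(u_n)\in M'\cap M^\omega$ breaks down at two points. First, there is no reason $\psi(u)$ is Haar in $qB^\omega q$: the $*$-homomorphism $\psi$ can collapse the spectrum, so ``$\tau(b_n^k)\to 0$ for $k\neq 0$'' and hence ``$b_n\to 0$ weakly'' are unjustified. Second, even granting $b_n\to 0$ weakly, the quantities $\langle v_n\xi,\eta v_n\rangle$ for $\xi,\eta\in\mathcal H$ do not obviously tend to zero via mixing, because $v_n\xi\notin\mathcal H$ in general (the mixing hypothesis concerns the $B$-action on $\mathcal H$, not multiplication by arbitrary $v_n\in M$); and in any case controlling these inner products for $\xi,\eta$ in a single, possibly small, sub-bimodule $\mathcal H$ cannot force $\|v_n\|_2\to 0$. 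The paper instead uses Popa's construction of a decreasing sequence $A_n$ of diffuse abelian subalgebras of $M$ with $\|y-E_{A_n'\cap M}(y)\|_2\to 0$ for all $y\in M$; forming $A_\omega=\prod_n^\omega A_n\subset M'\cap M^\omega$, one reduces the ultraproduct intertwining to a concrete $A_n\prec_M B$ for some $n$, and then runs the same mixing argument as in (1), using that $v(A_N'\cap M)v^*$ commutes with $\phi(A_Np)$ and exhausts $q_0Mq_0$ in $\|\cdot\|_2$ as $N\to\infty$.

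For the ``moreover'' clause the paper's route is also simpler than yours: once the claim gives $A_n\prec_{\tilde M}B$, \cite[Theorem 1.2.1]{IPP05} (applied to $A_n\subset M$ and $\tilde M=M*_BN$) immediately yields $A_n\prec_M B$, and one is back in the previous case. There is no need to manufacture mixing sub-bimodules of $L^2(\tilde M)$ from those of $L^2(M)$ via Connes tensor products.
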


\begin{proof}
 (1) Assume by contradiction that  $A\subset M$ is a Cartan subalgebra such that $A\prec_{M}B$. Then we can find projections $p\in A,q\in B$,  a non-zero partial isometry $v\in M$ and a $*$-homomorphism $\phi:Ap\rightarrow qBq$ such that $v^*v=p$, $q_0=vv^*\leqslant q$ and $\phi(x)v=vx$, for all $x\in Ap$.

 Towards  a contradiction, let $r\in B'\cap M$ be the smallest projection such that $q_0\leqslant r$. By the hypothesis we can find a mixing $B$-$B$ bimodule $\mathcal H\subset L^2(M)$ such that $r\mathcal Hr\not=\{0\}$. Denote by $e$ the orthogonal projection from $L^2(M)$ onto $\mathcal H$. 
 
  Denote $B_0=\phi(Ap)\subset qBq$.
 Fix $u\in\mathcal N_{pMp}(Ap)$ and denote by $\theta$ the automorphism of $B_0$ given by $\theta=\phi\circ\text{Ad}(u)\circ\phi^{-1}$. Then  we have that \begin{equation}\label{inter}vuv^*\; y=\theta(y)\;vuv^*\;\;\;\text{for all}\;\;\; y\in B_0\end{equation}

 Since $B_0$ is diffuse, we can find a sequence $y_n\in\mathcal U(B_0)$ such that $y_n\rightarrow 0$, weakly. 
 Since $\mathcal H$ is a $B$-$B$ bimodule by using equation \ref{inter} we get that $e(vuv^*)y_n=\theta(y_n)e(vuv^*)$, for all $n$. Hence we have that $\langle \theta(y_n^*)e(vuv^*)y_n,e(vuv^*)\rangle=\|e(vuv^*)\|_2^2$, for all $n$. Since $y_n\rightarrow 0$ weakly and $\mathcal H$ is a mixing $B$-$B$ bimodule, we derive that $e(vuv^*)=0$.
 
Since this holds for any unitary $u\in\mathcal N_{pMp}(Ap)$ and $Ap\subset pMp$ is a Cartan subalgebra, we conclude that $e(q_0Mq_0)=e(vpMpv^*)=\{0\}$.
 Let us show that this implies that $e(rMr)=\{0\}$.
 
 Indeed, denote by $\mathcal K$ the $\|.\|_2$ closure of the convex hull of the set $\{wq_0w^*|w\in\mathcal U(B)\}$. Since $e(q_0Mq_0)=\{0\}$ and $e$ is the orthogonal projection onto a $B$-$B$ bimodule, it follows that  we have $e(zMz)=\{0\}$, for all $z\in{\mathcal K}$. Since $E_{B'\cap M}(q_0)\in{\mathcal K}$ (more precisely, $E_{B'\cap M}(q_0)$ is the unique element of minimal $\|.\|_2$ in ${\mathcal K}$) we get that $e(E_{B'\cap M}(q_0)ME_{B'\cap M}(q_0))=\{0\}$.
Since $r$ is equal to the support of $E_{B'\cap M}(q_0)$, we have that $E_{B'\cap M}(q_0)ME_{B'\cap M}(q_0)$ is a $\|.\|_2$-dense subspace of $rMr$.  Thus, we would get that $e(rMr)=\{0\}$ or, equivalently, that $r\mathcal H r=\{0\}$, which provides the desired contradiction.

\vskip 0.05in

(2) First, assume by contradiction that $M$ has property Gamma and that $M'\cap M^{\omega}\prec_{M^{\omega}}B^{\omega}$.

Let $\{y_i\}_{i\geqslant 1}$ be a $\|.\|_2$ dense sequence in $M$.
Since $M$ has property Gamma, by a construction of Popa (see the proof of \cite[Proposition 7]{Oz03}) we can find  diffuse abelian von Neumann subalgebras  $\{A_n\}_{n\geqslant 1}$  of $M$ such that for all $n$ we have that $A_{n+1}\subset A_n$ and that
\begin{equation}\label{comm}\|y_i-E_{A_n'\cap M}(y_i)\|_2\leqslant\frac{1}{n},\;\;\text{for all}\;\;1\leqslant i\leqslant n.\end{equation} 

Then we have

{\bf Claim}. $A_n\prec_{M}B$, for some $n\geqslant 1$.

{\it Proof of the claim}.
Denote by $A_{\omega}=\prod_{n=1}^{\omega}A_n$ the von Neumann subalgebra of $M^{\omega}$ consisting of all $x=(x_n)_n$ such that $\lim_{n\rightarrow\omega}\|x_n-E_{A_n}(x_n)\|_2=0$. 

Then \ref{comm} implies that $A_{\omega}\subset M'\cap M^{\omega}$.
Since $M'\cap M^{\omega}\prec_{M^{\omega}}B^{\omega}$, we get that $A_{\omega}\prec_{M^{\omega}}B^{\omega}$. Thus, we can find projections $p\in A_{\omega}, q\in B^{\omega}$, a non-zero partial isometry $v\in qM^{\omega}r$ and a $*$-homomorphism $\phi:A_{\omega}p\rightarrow qB^{\omega}q$ such that $\phi(x)v=vx$, for all $x\in A_{\omega}r$.

 Let $\delta=\|E_{B^{\omega}}(vv^*)\|_2$. Then $||E_{B^{\omega}}(vuv^*)||_2=\delta$, for all  $u\in\mathcal U(A_{\omega}p).$
Write $p=(p_n)_n$ and $v=(v_n)_n$, where $p_n\in A_n$ is a projection and $v_n\in M$, for all $n$. 

If the claim is false, then $A_n\nprec_{M}B$ and thus $A_np_n\nprec_{M}B$, for all $n\geqslant 1$. Thus, for every $n\geqslant 1$, we can find a unitary $u_n\in A_np_n$ such that $\|E_B(v_nu_nv_n^*)\|_2\leqslant\frac{\delta}{2}$.  Then the unitary $u=(u_n)\in A_{\omega}p$ satisfies $\|E_{B^{\omega}}(vuv^*)\|_2=\lim_{n\rightarrow\omega}\|E_B(v_nu_nv_n^*)\|_2\leqslant\frac{\delta}{2}$, which gives a contradiction.
\hfill$\square$

Let $n$ such that $A_n\prec_{M}B$. Then we can find projections $p\in A_n,q\in B$, a non-zero partial isometry $v\in qMp$ and a $*$-homomorphism $\phi:A_np\rightarrow qBq$ such that $\phi(x)v=vx$, for all $x\in A_np$. Denote $q_0=vv^*\leqslant q$ and let $r\in B'\cap M$ be the smallest projection such that $q_0\leqslant r$. The hypothesis implies the existence of a non-zero mixing $B$-$B$ bimodule $\mathcal H\subset L^2(M)$ such that $r\mathcal Hr\not=\{0\}$. Denote by $e$ the orthogonal projection from $L^2(M)$ onto $\mathcal H$.

Now, let $N\geqslant n$,
 $u\in A_N'\cap M$ and $x\in A_Np$. Write $x=x_0p$, where $x_0\in A_N$. Since $u$ and $x_0$ commute and $v=vp$ we get that $vuv^*\phi(x)=vuxv^*=vux_0v^*=vx_0uv^*=vxuv^*=\phi(x)vuv^*$. This shows that  $v(A_N'\cap M)v^*\subset q_0Mq_0$ commutes with $\phi(A_Np)\subset qBq$. 
 
 Since $\mathcal H$ is a mixing $B$-$B$ bimodule and $A_N$ is diffuse, by repeating the argument from the proof of (1) we get that $e(v(A_N'\cap M)v^*)=\{0\}$, for all $N\geqslant n$. Equation \ref{comm} then implies that $e(vy_iv^*)=0$, for all $i\geqslant 1$. By using the $\|.\|_2$ density of $\{y_i\}_{i\geqslant 1}$ in $M$ we conclude that $e(q_0Mq_0)=e(vMv^*)=\{0\}$
 and the end of the proof of (1) yields a contradiction.
 
 To prove the moreover assertion, assume by contradiction that $M'\cap M^{\omega}\prec_{\tilde M^{\omega}}B^{\omega}$. Then the above claim implies that $A_n\prec_{\tilde M}B$, for some $n\geqslant 1$.
 Since $A_n\subset M$ and $\tilde M=M*_{B}N$,  by \cite[Theorem 1.2.1]{IPP05}  we get that $A_n\prec_{M}B$. Continuing as above yields a contradiction.
\end{proof}

\subsection{Proof of Theorem \ref{nocartan}}   Define $\tilde M=M*_{B}(B\bar{\otimes}L(\mathbb Z)$) and let $\{\alpha_t\}_{t\in\mathbb R}$ be the one-parameter group of automorphisms of $\tilde M$ arising from $\delta$ as provided by Proposition \ref{dimagen}. Note that since $\delta_{|B}\equiv 0$, we have that $\alpha_t(x)=x$, for all $x\in B$ and every $t\in\mathbb R$. 
Let $S\subset M_0$ be a non-amenability set for $M$ relative to $B$. 

The core of the proof consists of proving several claims about the inclusion  $M\subset\tilde M$.
\vskip 0.05in
{\bf Claim 1.} $\delta_{|M_0}$ is unbounded.

{\it Proof of Claim 1.} Let $s\in L(\mathbb Z)$ be a semicircular element and $L^2(\langle M,e_B\rangle)\ni\xi\rightarrow\xi\# s\in L^2(\tilde M)$ be the unique embedding of $M$-$M$ bimodules sending $e_B$ to $s$. Let $a,b,c\in M_0$. Since $\delta$ is a real derivation, a calculation in the spirit of  the proof of \cite[Proposition 4.1]{Vo98} gives that 

$$\langle a\delta^*(e_B)b-E_M((\delta(a)\#s)s)b-aE_M(s(\delta(b)\#s)),c\rangle=$$ $$\langle\delta^*(e_B),a^*cb^*\rangle-\langle\delta(a)\#s,cb^*s\rangle-\langle\delta(b)\#s,sa^*c\rangle=$$ $$\langle\delta(bc^*a)-bc^*\delta(a)-\delta(b)c^*a,e_B\rangle=\langle b\delta(c^*)a,e_B\rangle=\langle ae_Bb,\delta(c)\rangle.$$

Thus, $ae_Bb$ belongs to the domain of $\delta^*$, for all $a,b\in M_0$. Hence, for all $x\in D(\delta)$, we have that 
$$\|\delta(x)\|_2=\sup_{y\in\hskip 0.02in\text{span}(M_0e_BM_0),\hskip 0.02in \|y\|_2\leqslant 1}|\langle \delta(x),y\rangle|=\sup_{y\in\hskip 0.02in\text{span}(M_0e_BM_0),\hskip 0.02in \|y\|_2\leqslant 1}|\langle x,\delta^*(y)\rangle|$$

Since $\delta$ is unbounded and $M_0\subset D(\delta)$ is dense in $\|.\|_2$, it follows that ${\delta}_{|M_0}$ is unbounded. \hfill$\square$

\vskip 0.05in
{\bf Claim 2.} $M'\cap \tilde M^{\omega}\subset M^{\omega}$.

{\it Proof of Claim 2.} 
Since $\tilde M=M*_{B}(B\bar{\otimes}L(\mathbb Z))$, there exists a $B$-$M$ bimodule $\mathcal K$ such that as $M$-$M$ bimodules we have $L^2(\tilde M)\ominus L^2(M)\cong L^2(M)\otimes_{B}\mathcal K$. Since $S$ is a non-amenability set for $M$ relative to $B$,   the second part of Lemma \ref{gap} implies that there is $\kappa>0$ such that $\|\xi\|_2\leqslant \kappa\sum_{y\in S}\|y\xi-\xi y\|_2$, for all $\xi\in L^2(\tilde M)\ominus L^2(M)$. This gives that $M'\cap \tilde M^{\omega}\subset M^{\omega}$.
\hfill$\square$

 Note that since  $M$ is a factor, Claim 2 implies that $M'\cap \tilde M=\mathbb C1$.

{\bf Claim 3.}  $\alpha_t(M)$ is not amenable relative to $B$ inside $\tilde M$, and $\alpha_t(M)\nprec_{\tilde M}B\bar{\otimes}L(\mathbb Z)$, for any $t\in\mathbb R$.
 
 {\it Proof of Claim 3.} Consider the $B$-$M$ bimodule $\mathcal H=\mathcal K\otimes_BL^2(\tilde M)$, where $\mathcal K$ is as in the proof of Claim 2. Then we have that $L^2(\langle\tilde M,e_B\rangle)\cong L^2(\tilde M)\otimes_BL^2(\tilde M)\cong L^2(M)\otimes_B(L^2(\tilde M)\oplus \mathcal H)$, as $M$-$M$ bimodules.  The second part of Lemma \ref{gap} now implies that $S$ is a non-amenability set for $M$ relative to $B$ inside $\tilde M$. In particular,  $M$ is not amenable relative to $B$ inside $\tilde M$. Since $\alpha_t$ leaves $B$ invariant, we derive that $\alpha_t(M)$ is not amenable relative to $B$ inside $\tilde M$, for any $t\in\mathbb R$.

  Assume by contradiction that $\alpha_t(M)\prec_{\tilde M}B\bar{\otimes}L(\mathbb Z)$, for some $t\in\mathbb R$. Since $\alpha_t(M)'\cap\tilde M=\mathbb C1$, by \cite[Remark 2.2]{Io12a} it follows that $\alpha_t(M)$ is amenable relative to $B\bar{\otimes}L(\mathbb Z)$ inside $\tilde M$. Note that $B\bar{\otimes}L(\mathbb Z)$ is amenable relative to $B$ inside $\tilde M$. Indeed, if $u\in L(\mathbb Z)$ is a generating Haar unitary, then the vectors $\xi_n=\frac{1}{\sqrt{n}}\sum_{k=1}^nu^ke_B{u^k}^*\in L^2(\langle \tilde M,e_B\rangle)$ satisfy $\langle x\xi_n,\xi_n\rangle=\tau(x)$, for all $x\in\tilde M$, and $\|y\xi_n-\xi_n y\|_2\rightarrow 0$, for all $y\in B\bar{\otimes}L(\mathbb Z)$.

By combining the last two facts and using \cite[Proposition 2.4 (3)]{OP07} we deduce that $\alpha_t(M)$ is amenable relative to $B$ inside $\tilde M$.  This leads to a contradiction.  \hfill$\square$
\vskip 0.05in
{\bf Claim 4.}
There exists $t_0>0$ such that $\alpha_t(M)\nprec_{\tilde M}M$, for all $t\in (0,t_0)$.

{\it Proof of Claim 4.} Assuming that the claim is false, we can find a sequence $t_n\rightarrow 0$ with $t_n>0$ such that $\alpha_{t_n}(M)\prec_{\tilde M}M$, for all $n\geqslant 1$. On the other hand, Claim 3 gives that $\alpha_{t_n}(M)\nprec_{\tilde M}B$. 
Recall that $\alpha_{t_n}(M)'\cap\tilde M=\mathbb C1$,  $M$ is a factor and $\tilde M=M*_B(B\bar{\otimes}L(\mathbb Z))$.  By combining all these facts,  the proof of \cite[Theorem 5.1]{IPP05}  implies that we can find a unitary operator $v_n\in \tilde M$ such that $v_n\alpha_{t_n}(M)v_n^*\subset M$.

Since $S$ is a non-amenability set for $M$ relative to $B$ inside $\tilde M$,  Lemma \ref{uniform} (2) provides a constant $C>0$ such that for every $n\geqslant 1$ we have $$\|\alpha_{t_n}(x)-E_M(\alpha_{t_n}(x))\|_2\leqslant C\sum_{y\in S}\|\alpha_{t_n}(y)-y\|_2,\;\;\text{for all}\;\; x\in (M)_1.$$

Finally, let $x\in M_0$. Proposition \ref{dimagen} gives that  $\|\frac{\alpha_t(x)-x}{t}-\delta(x)\# s\|_2\rightarrow 0$, as $t\rightarrow 0$. Also, we have that $\|\delta(x)\# s\|_2=\|\delta(x)\|_2$ and $E_M(\delta(x)\#s)=0$. By combining these facts with the last inequality we get that $\|\delta(x)\|_2\leqslant C\sum_{y\in S}\|\delta(y)\|_2$, for all $x\in M_0$, which contradicts Claim 1.\hfill$\square$

{\bf Claim 5.} $M$ does not have property Gamma. 

{\it Proof of Claim 5.} Assume by contradiction that $M$ has property Gamma and let $t\in (0,t_0)$, where $t_0$ is given by Claim 4. Proposition \ref{mixing} (2) then implies that $M'\cap M^{\omega}\nprec_{\tilde M^{\omega}}B^{\omega}$.
Since $B$ is invariant under $\alpha_{t}$, we get that $\alpha_{t}(M)'\cap\tilde M^{\omega}\nprec_{\tilde M^{\omega}}B^{\omega}.$ Also, by the above claims we have that $\alpha_t(M)\nprec_{\tilde M}B\bar{\otimes}L(\mathbb Z)$ and $\alpha_{t}(M)\nprec_{\tilde M}M$.

By  applying \cite[Theorem 6.3]{Io12a} to the inclusion $\alpha_{t}(M)\subset \tilde M=M*_{B}(B\bar{\otimes}L(\mathbb Z))$, we deduce that 
$\alpha_{t}(M)p$ is amenable relative to $B$ inside $\tilde M$, for a non-zero projection $p\in\alpha_{t}(M)'\cap\tilde M$. 
Since $\alpha_t(M)'\cap\tilde M=\mathbb C1$, this would imply that $\alpha_t(M)$ is amenable relative to $B$ inside $\tilde M$, which is false, by Claim 3.\hfill$\square$

We are now ready to prove the conclusion of Theorem \ref{nocartan}. Thus, assume by contradiction that $M$  has a Cartan subalgebra $A$. 
Let $t\in (0,t_0)$. Note that $\alpha_{t}(M)\subset\mathcal N_{\tilde M}(\alpha_{t}(A))''$ and therefore $\alpha_{t}(M)\subset\mathcal N_{\tilde M}(\alpha_{t}(A))''$. By combining Claims 2 and 5, we get that $M'\cap\tilde M^{\omega}=\mathbb C1$.
Thus, we also have that 
  $\alpha_{t}(M)'\cap\tilde M^{\omega}=\mathbb C1$. Altogether, we are in position to apply Theorem \ref{general} (in the case $Q=\mathbb C1$) and deduce that one of the following conditions holds:

\begin{enumerate}
\item $\alpha_{t}(A)\prec_{\tilde M}B$.
\item $\alpha_{t}(M)\prec_{\tilde M}M$. 
\item $\alpha_{t}(M)\prec_{\tilde M}B\bar{\otimes}L(\mathbb Z)$.
\item $\alpha_{t}(M)$ is amenable relative to $B$ inside $\tilde M$.

\end{enumerate}

Since $\alpha_t$ leaves $B$ invariant, condition (1)  implies that $A\prec_{\tilde M}B$. Since $A\subset M$, \cite[Theorem 1.2.1]{IPP05} implies that $A\prec_{M}B$.  This however cannot happen by Proposition \ref{mixing} (1).
 Since conditions (2)-(4) are also false as shown above, we get a contradiction.
\hfill$\square$

\section{Algebraic cocycles and uniqueness of Cartan subalgebras}

In this final section we first prove a slightly more general form of Theorem \ref{unique} and then derive Corollary \ref{HNN}.

\begin{theorem}\label{unique2} Let $\Gamma$ be a group satisfying all the assumptions from Theorem \ref{unique}.  Assume that $\Gamma'$ is a group which admits a finite normal subgroup $N$ such that $\Gamma'/N\cong\Gamma$.

Then $L^{\infty}(X)$ is the unique Cartan subalgebra of $L^{\infty}(X)\rtimes\Gamma'$, up to unitary conjugacy, for any free ergodic probability measure preserving action $\Gamma'\curvearrowright (X,\mu)$.

\end{theorem}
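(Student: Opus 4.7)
The plan is to reduce uniqueness of the Cartan in $M' = L^{\infty}(X) \rtimes \Gamma'$ to Theorem~\ref{unique} applied to the quotient action $\Gamma = \Gamma'/N \curvearrowright X/N$, and then transfer the conclusion via an amplification argument.

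First, I would pass to the quotient action. Since $\Gamma' \curvearrowright (X,\mu)$ is free, $N$ acts freely on $X$, so $(X/N, \bar\mu)$ is a standard probability space (with $\bar\mu$ the normalized pushforward). The resulting action $\Gamma \curvearrowright (X/N, \bar\mu)$ is pmp by construction, ergodic (inherited from $\Gamma' \curvearrowright X$), and free: were $\pi(g) \cdot [x]_N = [x]_N$ on a set of positive measure for some $g \in \Gamma' \setminus N$, then $gx \in Nx$ on that set, giving $(n^{-1}g)x = x$ on a positive measure set for some $n \in N$, contradicting freeness of $\Gamma' \curvearrowright X$. Theorem~\ref{unique} then applies and yields that $L^{\infty}(X/N)$ is the unique Cartan subalgebra of $M := L^{\infty}(X/N) \rtimes \Gamma$, up to unitary conjugacy.

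Next, I would identify $M'$ as a matrix amplification of $M$. Normality of $N$ gives $\mathcal R_{\Gamma' \curvearrowright X} = \pi^{-1}(\mathcal R_{\Gamma \curvearrowright X/N})$ (a point $y$ is in $\Gamma' x$ iff $[y]_N \in \Gamma \cdot [x]_N$). Choosing a measurable fundamental domain $D \subset X$ for the $N$-action identifies $X \cong N \times D$ equivariantly and yields a trace-preserving isomorphism $L^{\infty}(X) \rtimes N \cong L^{\infty}(X/N) \bar{\otimes} M_{|N|}(\mathbb{C})$, carrying $L^{\infty}(X)$ to $L^{\infty}(X/N) \bar{\otimes} D_{|N|}$ (diagonal). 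Combining the Feldman--Moore realization of $M'$ as the equivalence relation algebra of $\mathcal R_{\Gamma' \curvearrowright X}$ with the fact that inflation along a finite-to-one covering amplifies the equivalence relation algebra by the corresponding matrix algebra produces a trace-preserving isomorphism
$$
M' \;\cong\; M \,\bar{\otimes}\, M_{|N|}(\mathbb{C})
$$
sending the Cartan $L^{\infty}(X) \subset M'$ onto $L^{\infty}(X/N) \bar{\otimes} D_{|N|}$.

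To conclude, I would invoke stability of Cartan uniqueness under matrix amplification. Given any Cartan $A$ of $M \bar{\otimes} M_{|N|}(\mathbb{C})$, compression by a rank-one matrix projection $p = 1 \otimes e_{11}$ (after a unitary adjustment so that $p$ commutes with $A$) yields a Cartan of $p(M \bar{\otimes} M_{|N|})p \cong M$, which by the previous step is unitarily conjugate in $M$ to $L^{\infty}(X/N)$; a routine matrix-unit lifting then produces a unitary in $M \bar{\otimes} M_{|N|}$ conjugating $A$ to $L^{\infty}(X/N) \bar{\otimes} D_{|N|}$. Composing with the isomorphism from the previous paragraph delivers the desired uniqueness of $L^{\infty}(X)$ as a Cartan of $M'$.

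The main obstacle is the second paragraph: although the extension $1 \to N \to \Gamma' \to \Gamma \to 1$ need not split---so that $M'$ is a priori only a twisted crossed product $(L^{\infty}(X/N) \bar{\otimes} M_{|N|}(\mathbb{C})) \rtimes_{(\alpha,\omega)} \Gamma$---the equivalence relation viewpoint circumvents the cocycle $\omega$ and delivers an honest $|N|$-fold matrix amplification of $(M, L^{\infty}(X/N))$. It is this honest amplification, rather than a Morita-equivalent substitute, that is required for the final compression--lifting argument to transport uniqueness of the Cartan from $M$ back to $M'$.
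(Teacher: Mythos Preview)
Your reduction is sound, but it takes a genuinely different route from the paper. The paper does \emph{not} first prove Theorem~\ref{unique} and then lift to $\Gamma'$; instead it gives a single argument that handles $\Gamma'$ directly. Concretely, the paper builds the derivation $\delta$ and the associated one-parameter group $\alpha_t$ on $L(\tilde\Gamma)$ (with $\tilde\Gamma=\Gamma*_\Lambda(\Lambda\times\mathbb Z)$) exactly as in your implicit base case, but then works with the comultiplication-type map $\Delta:M\to M\bar\otimes L(\Gamma)$, $\Delta(au_g)=au_g\otimes u_{p(g)}$ for $g\in\Gamma'$, where $p:\Gamma'\to\Gamma$ is the quotient. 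Applying Theorem~\ref{general} to $\theta_t=(\mathrm{id}\otimes\alpha_t)\circ\Delta$ forces $\Delta(A)\prec M\bar\otimes L(\Lambda)$, and then \cite[Lemma 7.2]{Io12a} pulls this back to $A\prec_M L^\infty(X)\rtimes p^{-1}(\Lambda)$; the finiteness hypothesis on $\cap g_i\Lambda g_i^{-1}$ finishes via \cite{HPV10}. Thus the finite kernel $N$ is absorbed into $p$ and never appears as an amplification.

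Your approach buys modularity: once Theorem~\ref{unique} is in hand (the $N=\{e\}$ case of the paper's argument), the passage to $\Gamma'$ becomes the soft, well-known fact that uniqueness of Cartan is preserved under finite amplifications, together with the Feldman--Moore identification $L^\infty(X)\rtimes\Gamma'\cong L(\mathcal R_{\Gamma'\curvearrowright X})\cong L(\mathcal R_{\Gamma\curvearrowright X/N})\bar\otimes M_{|N|}$. The paper's approach buys economy---one proof, no separate amplification lemma---and avoids any discussion of whether the extension $1\to N\to\Gamma'\to\Gamma\to 1$ splits. One caveat on your write-up: in the paper's logical order Theorem~\ref{unique} is \emph{derived from} Theorem~\ref{unique2} (as the case $N=\{e\}$), so if you adopt your strategy you should make explicit that you are first running the paper's deformation/rigidity argument in the special case $\Gamma'=\Gamma$ to establish Theorem~\ref{unique}, and only then invoking amplification.
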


{\it Proof.} Let $\Lambda<\Gamma$ be a subgroup and $b:\Gamma\rightarrow\mathbb C(\Gamma/\Lambda)$ a cocycle satisfying the hypothesis of Theorem \ref{unique}. After replacing $b$ with its real or imaginary part, we may assume that we have $b(\Gamma)\subset\mathbb R(\Gamma/\Lambda)$.
For $g\in\Gamma$, write $b(g)=\sum_{h\Lambda\in\Gamma/\Lambda}c_{g,h\Lambda}\delta_{h\Lambda}$, where $c_{g,h\Lambda}\in\mathbb R$.

Consider the isometry $V:\ell^2(\Gamma/\Lambda)\rightarrow L^2(\langle L(\Gamma),e_{L(\Lambda)}\rangle)$ given by $V(\delta_{h\Lambda})=u_he_{L(\Lambda)}u_h^*$. Then $V(\pi(g)\xi)=u_g\xi u_g^*$, where $\pi:\Gamma\rightarrow\ell^2(\Gamma/\Lambda)$ is the quasi-regular representation $\pi(g)(\delta_{h\Lambda})=\delta_{gh\Lambda}$.

We define $\delta:\mathbb C\Gamma\rightarrow L^2(\langle L(\Gamma),e_{L(\Lambda)}\rangle)$ by putting  $\delta(u_g)=i\;V(b(g))u_g^*$, or, explicitely, $$\delta(u_g)=i\hskip 0.02in \sum_{h\Lambda\in\Gamma/\Lambda}c_{g,h\Lambda}\;u_he_{L(\Lambda)}u_h^*\;u_g,\;\;\text{for all}\;\;g\in\Gamma.$$

Then it is easy to see that $\delta$ is a  real derivation. 
Since $b_{|\Lambda}\equiv 0$, we have that $\delta_{|\mathbb C\Lambda}\equiv 0$.
As for every $g\in\Gamma$ we have that $Tr(\delta(u_g)e_{L(\Lambda)})=\delta_{g,e}c_{e,e\Lambda}=\delta_{g,e}\langle b(e),\delta_{e\Lambda}\rangle=0$, it follows that $\delta^*(e_{L(\Lambda)})=0$. Since $b(\Gamma_n)\subset\mathbb C(\Gamma_n\Lambda/\Lambda)$, we get that $\delta(\mathbb C\Gamma_n)\subset$ span$(\mathbb C\Gamma_ne_{L(\Lambda)}\mathbb C\Gamma_n)$, for all $n\geqslant 1$. Since $\mathbb C\Gamma_n$ is finitely generated and $\mathbb C\Gamma=\cup_{n\geqslant 1}\mathbb C\Gamma_n$, we are in position to apply Proposition \ref{dimagen}.

Let $\tilde\Gamma=\Gamma*_{\Lambda}(\Lambda\times\mathbb Z)$ and $s\in L(\mathbb Z)$ a generating semicircular element. Proposition \ref{dimagen}  provides a one-parameter group of automorphisms $\{\alpha_t\}_{t\in\mathbb R}$ of $L(\tilde\Gamma)$ satisfying $\|\frac{\alpha_t(u_g)-u_g}{t}-\delta(u_g)\# s\|_2\rightarrow 0$, for all $g\in\Gamma$. Since $\delta_{|\mathbb C\Lambda}\equiv 0$ and $\delta^*(e_{L(\Lambda)})=0$, we have that $\alpha_t(x)=x$, for all $x\in L(\Lambda\times\mathbb Z)$.

For $f\in\ell^{\infty}(\Gamma/\Lambda)$ and $h\in\Gamma$, we define $\sigma(h)(f)\in\ell^{\infty}(\Gamma/\Lambda)$ by letting $\sigma(h)(f)(g\Lambda)=f(h^{-1}g\Lambda)$.
Since $\Lambda$ is not co-amenable in $\Gamma$, there exists a finite set $S\subset\Gamma$ such that there is no $\sigma(S)$-invariant state on $\ell^{\infty}(\Gamma/\Lambda)$.
Consider the unital $*$-homomorphism $\rho:\ell^{\infty}(\Gamma/\Lambda)\rightarrow \langle L(\Gamma),e_{L(\Lambda)}\rangle$ given by $\rho(f)=\sum_{g\Lambda\in\Gamma/\Lambda}f(g\Lambda)u_ge_{L(\Lambda)}u_g^*.$  
Then $\rho(\sigma(h)(f))=u_h\rho(f)u_h^*$,  for all $h\in\Gamma$. Thus, there is no $S$-central state on $\langle L(\Gamma),e_{L(\Lambda)}\rangle$ and hence $S$ is a non-amenability set for $L(\Gamma)$ relative to $L(\Lambda)$.

Since $\delta$ is unbounded and $S\subset\Gamma$ is a non-amenability set for $L(\Gamma)$ relative to $L(\Lambda)$,   Claims 1-3 from the proof of Theorem \ref{nocartan} (applied here verbatim  in the case $M=L(\Gamma)$ and $B=L(\Lambda)$) give the following:

\begin{itemize}
\item $L(\Gamma)'\cap L(\tilde\Gamma)^{\omega}\subset L(\Gamma)^{\omega}$.
\item $\alpha_t(L(\Gamma))$ is not amenable relative to $L(\Lambda)$ inside $L(\tilde\Gamma)$, for any $t\in\mathbb R$.
\item $\alpha_t(\Gamma)\nprec_{L(\tilde\Gamma)}L(\Lambda\times\mathbb Z)$, for any $t\in\mathbb R$.
\item there exists $t_0>0$ such that $\alpha_t(L(\Gamma))\nprec_{L(\tilde\Gamma)}L(\Gamma)$, for any $t\in (0,t_0)$.
\end{itemize}

Now, let $\Gamma'\curvearrowright (X,\mu)$ be a free ergodic p.m.p. action. Define $M=L^{\infty}(X)\rtimes\Gamma'$ and let $A$ be a Cartan subalgebra of $M$.  We want to show that $A$ is unitarily conjugate to $L^{\infty}(X)$.

 To this end, let $\Delta:M\rightarrow M\bar{\otimes}L(\Gamma)$ be the $*$-homomorphism given by $\Delta(au_g)=au_g\otimes u_{p(g)}$, for all $a\in L^{\infty}(X)$ and $g\in\Gamma'$ \cite{PV09}. Here, $p:\Gamma'\rightarrow\Gamma$ denotes the quotient homomorphism. 
Further, we  fix $t\in (0,t_0)$ and define $\theta_t=(\text{id}_M\otimes\alpha_t)\circ\Delta:M\rightarrow M\bar{\otimes}L(\tilde\Gamma)$.

Then $\theta_t(M)\subset\mathcal N_{M\bar{\otimes}L(\tilde\Gamma)}(\theta_t(A))''$, therefore $u_g\otimes\alpha_t(u_{p(g)})\in \mathcal N_{M\bar{\otimes}L(\tilde\Gamma)}(\theta_t(A))''$, for all $g\in\Gamma$. Since $L(\Gamma)'\cap L(\tilde\Gamma)^{\omega}\subset L(\Gamma)^{\omega}$ and $L(\Gamma)$ does not have property Gamma, we have
 $\alpha_t(L(\Gamma))'\cap L(\tilde\Gamma)^{\omega}=\mathbb C1$.  By applying Theorem \ref{general} we conclude  that one of the following conditions holds:

\begin{enumerate}
\item $\theta_t(A)\prec_{M\bar{\otimes}L(\tilde\Gamma)}M\bar{\otimes}L(\Lambda)$.
\item $\alpha_t(L(\Gamma))\prec_{L(\tilde\Gamma)}L(\Gamma)$.
\item $\alpha_t(L(\Gamma))\prec_{L(\tilde\Gamma)}L(\Lambda\times\mathbb Z)$.
\item $\alpha_t(L(\Gamma))$ is amenable relative to $L(\Lambda)$ inside $L(\tilde\Gamma$).
\end{enumerate}
Since conditions (2)-(4) cannot hold by the above, condition (1) must be true.
Since $\alpha_t$ leaves $L(\Lambda)$ invariant,  (1) is equivalent to having $\Delta(A)\prec_{M\bar{\otimes}L(\tilde\Gamma)}M\bar{\otimes}L(\Lambda)$. 
Since $\Delta(A)\subset M\bar{\otimes}L(\Gamma)$ and $L(\tilde\Gamma)=L(\Gamma)*_{L(\Lambda)}L(\Lambda\times\mathbb Z)$, \cite[Theorem 1.2.1]{IPP05} gives that $\Delta(A)\prec_{M\bar{\otimes}L(\Gamma)}M\bar{\otimes}L(\Lambda)$.

By using  \cite[Lemma 7.2]{Io12a} we derive that $A\prec_{M}L^{\infty}(X)\rtimes p^{-1}(\Lambda)$. For $i\in\{1,2,...,m\}$, let $g_i'\in\Gamma'$ such that $p(g_i')=g_i$.  Since $M$ is a factor, \cite[Proposition 8]{HPV10}  implies that $A\prec_{M}L^{\infty}(X)\rtimes(\cap_{i=1}^mg_i'p^{-1}(\Lambda){g_i'}^{-1})$. Since $\cap_{i=1}^m g_i\Lambda g_i^{-1}$ is finite, $\cap_{i=1}^m g_i'p^{-1}(\Lambda){g_i'}^{-1}$ is  also finite.  By combining the last two facts we get that  $A\prec_{M}L^{\infty}(X)$. Since $A$ and $L^{\infty}(X)$ are Cartan subalgebras of $M$, \cite[Theorem A.1]{Po01} yields that they are unitarily conjugate (see also \cite[Theorem C.3]{Va06}).
\hfill$\square$

Turning to the proof of Corollary \ref{HNN}, let us first establish the following technical result.

\begin{lemma}\label{inner} Let $G$ be a countable group, $\Lambda<G$ be a subgroup, and $\theta:\Lambda\rightarrow G$ be an injective group homomorphism. Assume that $\Lambda\not=G$ and $\theta(\Lambda)\not=G$. Denote by $\Gamma=HNN(G,\Lambda,\theta)$ the corresponding HNN extension.  Then we have 

\begin{enumerate}
\item If  $\cap_{i=1}^mh_i\Lambda h_i^*=\{e\}$, for some $h_1,h_2,...,h_m\in\Gamma$, then $\Gamma$ is not inner amenable.
\item $\Lambda$ is not co-amenable in $\Gamma$.
\end{enumerate}

\begin{remark}
If $\Lambda=G$ or $\theta(\Lambda)=G$, then $\Lambda$ {\it is} co-amenable in $\Gamma$ by \cite[Proposition 2]{MP03}.
\end{remark}

\end{lemma}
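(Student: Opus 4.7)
Both parts rely on the Bass-Serre tree $T$ of $\Gamma=\mathrm{HNN}(G,\Lambda,\theta)$, with vertex set $\Gamma/G$ and edge set $\Gamma/\Lambda$. The assumptions $\Lambda\neq G$ and $\theta(\Lambda)\neq G$ guarantee that each vertex of $T$ has valence $[G:\Lambda]+[G:\theta(\Lambda)]\geqslant 4$, so $\Gamma$ acts on $T$ without global fixed vertex, fixed end, or invariant line; in particular $\Gamma$ contains non-abelian free subgroups of hyperbolic isometries.

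For (2), I would produce such a free group explicitly by ping-pong. Picking $a\in G\setminus\Lambda$ and $b\in G\setminus\theta(\Lambda)$, the elements $\alpha=(ta)^{N}$ and $\beta=(t^{-1}b)^{N}$ are cyclically reduced HNN words (the only signs to check are $(+,+)$ and $(-,-)$, for which no HNN relation applies), hence hyperbolic on $T$; for $N$ large enough their axes share only a bounded segment and translate in opposite $t$-directions, so Klein's ping-pong lemma yields $\langle\alpha,\beta\rangle\cong F_{2}$. Every non-trivial element of this $F_{2}$ is itself hyperbolic on $T$, so it fixes no vertex of $T$ and cannot lie in any edge stabiliser, giving $F_{2}\cap g\Lambda g^{-1}=\{e\}$ for every $g\in\Gamma$. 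Thus the $F_{2}$-action on $\Gamma/\Lambda$ is free, so $\ell^{2}(\Gamma/\Lambda)|_{F_{2}}$ is a direct sum of copies of the regular representation of $F_{2}$; since $F_{2}$ is non-amenable this representation has no almost-invariant vector, and a fortiori neither does $\ell^{2}(\Gamma/\Lambda)$ as a $\Gamma$-representation, so $\Lambda$ is not co-amenable in $\Gamma$.

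For (1), I would suppose for contradiction that $\Gamma$ is inner amenable and take a net of unit vectors $(\xi_{n})\subset\ell^{2}(\Gamma\setminus\{e\})$ with $\|\mathrm{Ad}(g)\xi_{n}-\xi_{n}\|_{2}\to 0$ for all $g\in\Gamma$. I split $\Gamma\setminus\{e\}=\Gamma_{\mathrm{hyp}}\sqcup\Gamma_{\mathrm{ell}}$ into the hyperbolic and elliptic parts for the $T$-action; both are $\mathrm{Ad}(\Gamma)$-invariant, so after passing to a subnet one may assume $\xi_{n}$ is supported on a single piece. The hyperbolic part is eliminated as in (2): for any hyperbolic $g$, $C_{\Gamma}(g)$ is contained in the stabiliser of the axis of $g$, and the $F_{2}$ above can be arranged (using the hypothesis $\bigcap_{i}h_{i}\Lambda h_{i}^{-1}=\{e\}$ to rule out oversized pointwise axis-stabilisers) to meet every such centraliser trivially, so $\ell^{2}(\Gamma_{\mathrm{hyp}})|_{F_{2}}$ is a multiple of the regular representation of $F_{2}$ and has no almost-invariant vectors. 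For the elliptic part, I would stratify $\Gamma_{\mathrm{ell}}$ by the fixed subtree $T^{g}\subseteq T$ and use the simultaneous $\mathrm{Ad}(h_{i})$-almost invariance: iterating should force the $\ell^{2}$-mass of $\xi_{n}$ to concentrate on elements $g$ whose fixed subtree contains the base edge together with all its $h_{1},\dots,h_{m}$-translates, but such elements form a conjugate of $\bigcap_{i}h_{i}\Lambda h_{i}^{-1}=\{e\}$, contradicting $\|\xi_{n}\|_{2}=1$.

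The main obstacle is the elliptic step of (1). Elliptic elements of $\Gamma$ can have arbitrarily large fixed subtrees not contained in a single edge stabiliser, so one must convert the purely algebraic statement $\bigcap_{i}h_{i}\Lambda h_{i}^{-1}=\{e\}$ into a quantitative $\ell^{2}$-estimate controlling how much $\ell^{2}$-mass of $\xi_{n}$ can remain after conjugation by the $h_{i}$. The delicate point is that the fixed-subtree stratification of $\Gamma_{\mathrm{ell}}$ is only $\mathrm{Ad}(G)$-equivariant and not $\mathrm{Ad}(\Gamma)$-equivariant, so the convergence estimates must be combined with an equidistribution argument along $\Gamma$-conjugacy classes before the intersection hypothesis can be applied to close the contradiction.
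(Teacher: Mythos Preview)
Your argument for (2) is correct and takes a different route from the paper's. Rather than constructing an explicit free subgroup, the paper assumes a $\Gamma$-invariant mean $m$ on $\Gamma/\Lambda$ and runs a ping-pong directly on $m$: partitioning $\Gamma/\Lambda$ via Britton's normal form into pieces $\pi(S),\pi(U),\pi(V),G/\Lambda$ according to the leftmost syllable, the inclusions $t^{-1}S\subset U$, $tS\subset V$, $aU\subset S$, $bV\subset S$ and $a\pi(U)\cap b\pi(V)=\emptyset$ force $m(\pi(S))=m(\pi(U))=m(\pi(V))=0$, whence $m(G/\Lambda)=1$; but $t(G/\Lambda)\cap(G/\Lambda)=\emptyset$ is then a contradiction. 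Your route is more geometric; the paper's is shorter and avoids any care about axes or translation lengths.

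For (1) there are genuine gaps. The claim that $F_2$ can be arranged to meet $C_\Gamma(g)$ trivially for every hyperbolic $g$ is false as stated: any non-trivial $f\in F_2$ is itself hyperbolic and lies in $C_\Gamma(f)$. (This step \emph{can} be repaired: $C_{F_2}(g)$ injects into the isometry group of the axis of $g$ and is therefore cyclic, hence amenable, so $\ell^2(\Gamma_{\mathrm{hyp}})|_{F_2}$ is weakly contained in the regular representation of $F_2$; but that is not the argument you gave.) More seriously, the elliptic step is left as a sketch that you yourself flag as problematic, and it does not close: nothing in your outline pins the $\ell^2$-mass to the stabilizer of a \emph{specific} vertex before the $h_i$ are invoked, and without that localization the hypothesis $\bigcap_i h_i\Lambda h_i^{-1}=\{e\}$ cannot be applied. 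The paper sidesteps the hyperbolic/elliptic dichotomy entirely: the same normal-form ping-pong, now run with conjugation ($t^{-1}St\subset U$, $tSt^{-1}\subset V$, $aUa^{-1}\subset S$, $bVb^{-1}\subset S$, $aUa^{-1}\cap bVb^{-1}=\emptyset$), shows any conjugation-invariant mean $m$ on $\Gamma\setminus\{e\}$ satisfies $m(G)=1$; then $t^{-1}Gt\cap G=\Lambda$ gives $m(\Lambda)=1$, and conjugating by the $h_i$ gives $m(\{e\})=1$, a contradiction in three lines.
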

\begin{proof}
Recall that $\Gamma=\langle G,t\;|\; t\lambda t^{-1}=\theta(\lambda),\forall\lambda\in\Lambda\rangle$, where $t$ is the so-called  stable letter.
Let $A\subset G$ and $B\subset G$ be sets of representatives of the left cosets of $\Lambda$ and $\theta(\Lambda)$ in $G$, respectively. We assume that $e\in A\cap B$.
Since $\Lambda\not=G\not=\theta(\Lambda)$, we can find $a\in A\setminus\{e\}$ and $b\in B\setminus\{e\}$.

Below, we will implicitly use the {\it normal form theorem}   \cite[Chapter IV, Theorem 2.1]{LS77}: every  $g\in\Gamma$ can be uniquely written as a product $g=g_nt^{\varepsilon_n}g_{n-1}...g_1t^{\varepsilon_1}g_{0}$, for some $g_0,g_1,...,g_{n}\in G$ and $\varepsilon_1,...,\varepsilon_n\in\{-1,1\}$ such that $g_i\in A$, if $\varepsilon_i=-1$, and  $g_i\in B$, if $\varepsilon_i=1$, for all $i\in\{1,2,...,n\}$, and that there is no consecutive subsequence $t^{\varepsilon},1,t^{-\varepsilon}$ within the sequence $g_n,t^{\varepsilon_n},...,t^{\varepsilon_1},g_0$.

(1) Assume by contradiction that $\Gamma$ is inner amenable and let $\phi:\ell^{\infty}(\Gamma\setminus\{e\})\rightarrow\mathbb C$ be a state which is invariant under the conjugation action of $\Gamma$. For a subset $S\subset\Gamma$, we denote $m(S)=\phi(1_{S\setminus\{e\}})$. 

Denote by $S$  the set of all $g=g_nt^{\varepsilon_n}g_{n-1}...g_1t^{\varepsilon_1}g_{0}\in\Gamma$ (represented in normal form) such that $n\geqslant 1$ and $g_n\not=e$.
Also, denote by $U$ (respectively, $V$) the set of $g\in\Gamma$ such that $n\geqslant 1$, $g_n=e$ and $\varepsilon_n=-1$ (respectively, $\varepsilon_n=1$). Then we have that $$t^{-1}St\subset U,\;\;\; tSt^{-1}\subset V, \;\;\; aUa^{-1}\subset S,\;\;\; bVb^{-1}\subset S,\;\;\;\text{and}\;\;\; aUa^{-1}\cap bVb^{-1}=\emptyset.$$

Since $m$ is a $\Gamma$-invariant finitely additive measure, these inclusions imply that $m(S)\leqslant m(U)$, $m(S)\leqslant m(V)$ and $m(U)+m(V)\leqslant m(S)$. From this we deduce that $m(S)=m(U)=m(V)=0$. Since $S\cup U\cup V\cup G=\Gamma$ and $m(\Gamma)=1$, we conclude that $m(G)=1$.

Since $t^{-1}Gt\cap G=\Lambda$, we further get that $m(\Lambda)=1$. Finally, since $\cap_{i=1}^mh_i\Lambda h_i^*=\{e\}$, we derive that $m(\{e\})=1$. This contradicts the fact that $m(\{e\})=0$.
\vskip 0.05in
(2) Assume by contradiction that $\Lambda$ is co-amenable inside $\Gamma$ and let $\phi:\ell^{\infty}(\Gamma/\Lambda)\rightarrow\mathbb C$ be a $\Gamma$-invariant state. For a subset $S\subset \Gamma/\Lambda$, we denote $m(S)=\phi({1_S})$. Also, we denote by  $\pi:\Gamma\rightarrow\Gamma/\Lambda$ the canonical projection, and still consider $U,V,S$ as in (1). 

Next, we have that $t^{-1}S\subset U,tS\subset V,aU\subset S$ and $bV\subset S$. Moreover, $a\pi(U)\cap b\pi(V)=\emptyset$. Since $\pi$ is $\Gamma$-equivariant and 
$m$ is a finitely additive $\Gamma$-invariant measure, it follows as above that $m(\pi(S))=m(\pi(U))=m(\pi(V))=0$. Since $\pi(S)\cup \pi(U)\cup \pi(V)\cup G/\Lambda=\Gamma/\Lambda$ and $m(\Gamma/\Lambda)=1$, we conclude that $m(G/\Lambda)=1$. Finally, since $tG/\Lambda\cap G/\Lambda=\emptyset$, we would get that $m(\emptyset)=1$, which gives the desired contradiction.
\end{proof}

\subsection{Proof of Corollary \ref{HNN}}  Let $\Gamma=\text{HNN}(G,\Lambda,\theta)$. Since $\cap_{i=1}^mg_i\Lambda g_i^{-1}$ is finite, it follows that $N=\cap_{g\in\Gamma}g\Lambda g^{-1}$ is a finite normal subgroup of $\Gamma$ such that $N<\Lambda$. Moreover, we can find $h_1,h_2,...,h_n\in\Gamma$ such that $N=\cap_{j=1}^nh_j\Lambda h_j^{-1}$. Denote $\Gamma_0=\Gamma/N$, $G_0=G/N$, $\Lambda_0=\Lambda/N$ and let  $p:\Gamma\rightarrow \Gamma_0$ be the quotient homomorphism.

Since the stable letter $t$ normalizes $N$, if $g\in\Lambda$ then $\theta(g)=tgt^{-1}\in N$ if and only if $g\in N$. Therefore, $\theta:\Lambda\rightarrow G$ descends to an injective group homomorphism $\theta_0:\Lambda_0\rightarrow G_0$. Moreover, we have that $\Gamma_0$ is naturally isomorphic to HNN$(G_0,\Lambda_0,\theta_0)$ and $\cap_{j=1}^np(h_j)\Lambda_0p(h_j)^{-1}=\{e\}$.
Since $\Lambda_0\not=G_0$ and $\theta_0(\Lambda_0)\not=G_0$, by Lemma \ref{inner} we get that $\Gamma_0$ is not inner amenable (hence $L(\Gamma_0)$ is a II$_1$ factor without property Gamma) and $\Lambda_0$ is not co-amenable inside $\Gamma_0$.

Next, we define $b:\Gamma_0=\text{HNN}(G_0,\Lambda_0,\theta_0)\rightarrow\mathbb C(\Gamma_0/\Lambda_0)$  by letting $b(g)=0$, for all $g\in G_0$, and $b(t)=t\Lambda_0$, where $t\in\Gamma_0$ is the stable letter. Then $b$ is an unbounded cocycle. To see that $b$ is unbounded, just note that if $g\in G_0\setminus\Lambda_0$, then $\|b((gt)^n)\|_2=\sqrt{n}$, for all $n\geqslant 0$.

Finally, let $\{G_n\}_{n\geqslant 1}$ be a sequence of finitely generated subgroups of $G_0$ such that $G_0=\cup_{n\geqslant 1}G_n$. Let $\Gamma_n<\Gamma_0$ be the subgroup generated by $G_n$ and $t$. Then $\Gamma_n$ is finitely generated, $\Gamma_n\subset\Gamma_{n+1}$ and $b(\Gamma_n)\subset\mathbb C(\Gamma_n\Lambda/\Lambda)$, for all $n$. Moreover,  $\cup_{n\geqslant 1}\Gamma_n=\Gamma_0$. Altogether, we can apply Theorem \ref{unique2} to get the conclusion. \hfill$\square$


\begin{thebibliography}{}
\bibitem [AD95]{AD95} C. Anatharaman-Delaroche: {\it Amenable correspondences and approximation properties for von Neumann algebras}, Pacific J. Math. {\bf 171}, Number 2 (1995), 309-341.
\bibitem [BCG03]{BCG03} P. Biane, M. Capitaine, A. Guionnet: {\it Large deviation bounds for matrix brownian motion,} Invent. Math. {\bf 152} (2003), 433-459.
\bibitem [Be06]{Be06} S. Belinschi: {\it The Lebesgue decomposition of the free additive convolution of two probability distributions}, Probab. Theory Relat. Fields {\bf 142} (2008), 125-150.
\bibitem [BO08]{BO08} N.P. Brown, N. Ozawa: {\it C$^*$-algebras and finite-dimensional approximations}, Graduate Studies in Mathematics, 88. American Mathematical Society, Providence, RI, 2008. xvi+509 pp.
\bibitem [BV97]{BV97} M. Bekka, A. Valette: {\it Group cohomology, harmonic functions and the first L$^2$�Betti number}, Potential Anal. \textbf{6} (1997), no. 4, 313-326.
\bibitem [Co76]{Co76} A. Connes: {\it Classification of injective factors}, Ann. of Math. (2) {\bf 104} (1976), 73-115.
\bibitem [CS11]{CS11}  I. Chifan and T. Sinclair, \textit{On the structural theory of II$_1$ factors of negatively curved groups}. preprint arXiv:1103.4299, to appear in Ann. Sci. \'{E}cole Norm. Sup.
\bibitem [Da08]{Da08} Y. Dabrowski: {\it A Note about proving non-$\Gamma$ under a finite non-microstates free Fisher information Assumption}, J.  Funct. Anal. Volume {\bf 258} (2010), 3662-3674.
\bibitem [Da10a]{Da10a} Y. Dabrowski: {\it A non-commutative Path Space approach to stationary free Stochastic Differential Equations}, preprint arXiv:1006.4351.
\bibitem [Da10b]{Da10b}  Y. Dabrowski: {\it A Free Stochastic Partial Differential Equation}, preprint arXiv:1008.4742.
\bibitem [DL92]{DL92} E.B. Davies, J. M. Lindsay: {\it Non-commutative symmetric Markov semigroups} Math. Z. {\bf 210} (1992), 379-411.
\bibitem [FV10]{FV10} P. Fima, S. Vaes: {\it HNN extensions and unique group measure space decomposition of II$_1$ factors}, Trans.
Amer. Math. Soc. {\bf 364} (2012), 2601-2617.
\bibitem[Ge96]{Ge96} L. Ge, \textit{Applications of free entropy to finite von Neumann algebras}. II. Ann. of Math. \textbf{147} (1998), 143-157.
\bibitem [Ho12]{Ho12} C. Houdayer: {\it Structure of II$_1$ factors arising from free Bogoljubov actions of arbitrary groups}, preprint  arXiv:1209.5209.
\bibitem [HPV10]{HPV10} C. Houdayer, S. Popa, S. Vaes: {\it A class of groups for which every action is W$^*$-superrigid}, preprint arXiv:1010.5077, to appear in Groups Geom. Dyn.
\bibitem[Io12a]{Io12a} A. Ioana: {\it Cartan subalgebras of amalgamated free product II$_1$ factors}, preprint arXiv:1207.0054.
\bibitem [Io12b]{Io12b} A. Ioana: {\it Classiffication and rigidity for von Neumann algebras}, preprint arXiv:1212.0453,  to appear in Proceedings of the 6th European Congress of Mathematics (Krakow, 2012), European Mathematical Society Publishing House.
\bibitem [IPP05]{IPP05} A. Ioana, J. Peterson, S. Popa: {\it Amalgamated free products of weakly rigid factors and calculation of their symmetry groups}, Acta Math. {\bf 200} (2008), no. 1, 85-153.
\bibitem [LS77]{LS77} R.C. Lyndon, P.E. Schupp: {\it Combinatorial group theory}, Springer, 1977.
\bibitem [MP03]{MP03} N. Monod, S. Popa: {\it On co-amenability for groups and von Neumann algebras}, 
C. R. Acad. Sci. Canada \textbf{25} (2003), 82-87. 
\bibitem [MvN43]{MvN43} F.J. Murray, J. von Neumann: {\it Rings of operators IV}, Ann. Math. {\bf 44} (1943), 716-808.
\bibitem [OP07]{OP07} N. Ozawa, S. Popa: {\it On a class of II$_1$ factors with at most one Cartan subalgebra}, Ann. of Math. (2), {\bf 172} (2010), 713-749.
\bibitem [Oz03]{Oz03} N. Ozawa: {\it Solid von Neumann algebras}, Acta Math. {\bf 192} (2004), no. 1, 111-117.
\bibitem [Pe04]{Pe04} J. Peterson: {\it A 1-cohomology characterization of property (T) in von Neumann algebras}, Pacific Journal of Math., {\bf 243} (2009), 181-199.
\bibitem [Pe06]{Pe06} J. Peterson: {\it $L^2$--rigidity in von Neumann algebras},  Invent. Math.  {\bf 175}  (2009),  no. 2, 417--433.
\bibitem [Po83]{Po83} S. Popa: {\it Orthogonal pairs of $*$-subalgebras in finite von Neumann algebras}, J. Operator Theory, {\bf 9} (1983), 253-268.
\bibitem [Po86]{Po86} S. Popa: {\it  Correspondences}, INCREST Preprint 1986.
\bibitem[Po93]{Po93} S. Popa, \textit{Markov traces on universal Jones algebras and subfactors of finite index}. Invent. Math.
\textbf{111}(1993), 375-405.
\bibitem [Po01]{Po01} S. Popa: {\it On a class of type II$_1$ factors with Betti numbers invariants}, Ann. of Math. {\bf 163} (2006), 809--889. 
\bibitem [Po03]{Po03} S. Popa: {\it Strong Rigidity of II$_1$ Factors Arising from Malleable Actions of w-Rigid Groups. I.}, Invent. Math. {\bf 165} (2006), 369-408.
\bibitem[Po06a]{Po06a}S. Popa, \textit{On the superrigidity of malleable actions with spectral gap}. J. Amer.
Math. Soc. \textbf{21} (2008), 981-1000.
\bibitem [Po06b]{Po06b} S. Popa: {\it On Ozawa's Property for Free Group Factors}, Int. Math. Res. Notices (2007) Vol. 2007,
article ID rnm036.
\bibitem [Po07]{Po07}  S. Popa: {\it Deformation and rigidity for group 	
actions and von Neumann algebras},  International Congress of Mathematicians. 
Vol. I,  445--477, 
Eur. Math. Soc., Z$\ddot{\text{u}}$rich, 2007.
\bibitem [PS09]{PS09} J. Peterson, T. Sinclair: {\it On cocycle superrigidity for Gaussian actions}, Ergod. Th. Dynam. Sys. {\bf 32} (2012), 249-272.
\bibitem [PV09]{PV09} S. Popa, S. Vaes: {\it Group measure space decomposition of II$_1$ factors and} 
{\it W$^*$--superrigidity}, Invent. Math. {\bf 182} (2010), no. 2, 371-417.
\bibitem [PV11]{PV11} S. Popa, S. Vaes: {\it Unique Cartan decomposition for II$_1$ factors arising from arbitrary actions of free groups}, preprint arXiv:1111.6951.
\bibitem [PV12]{PV12} S. Popa, S. Vaes: \textit{Unique Cartan decomposition for II$_1$ factors arising from arbitrary actions of hyperbolic groups}, preprint arXiv:1201.2824, to appear in J. Reine Angew. Math. 
\bibitem [Sa89]{Sa89} J. -L. Sauvageot: {\it Tangent bimodules and locality for dissipative operators on
C$^*$-algebras},  Quantum Probability and Applications, IV. Lect. Notes Math.,
vol. 1396, 322-338. Springer, Berlin (1989).
\bibitem[Sh00]{Sh00} D. Shlyaktenko: {\it Free entropy with respect to a completely positive map}, Amer. J. of Math. \textbf{122} (2000), 45-81. 
\bibitem[Sh07]{Sh07} D. Shlyakhtenko: {\it Lower estimates on microstates free entropy dimension}, Anal. PDE 2 (2009), no. 2, 119-146.
\bibitem[Va06]{Va06} S. Vaes, \textit{Rigidity results for Bernoulli actions and their von Neumann algebras (after Sorin Popa)}. S\'{e}minaire Bourbaki, exp. no. 961. Ast\'{e}risque \textbf{311} (2007), 237-294.
\bibitem [Va08]{Va08} S. Vaes: {\it Explicit computations of all finite index bimodules for a family of II$_1$ factors},
Ann. Sci. de l'Ecole Norm. Sup. {\bf 41} (2008), 743--788.
\bibitem [Va09]{Va09} S. Vaes: {\it An inner amenable group whose von Neumann algebra does not have property Gamma},
Acta Math. {\bf 208} (2012), 389-394.
\bibitem [Va10]{Va10} S. Vaes: {\it Rigidity for von Neumann algebras and their invariants}, Proceedings of the ICM (Hyderabad, India, 2010), Vol. III, Hindustan Book Agency (2010),  1624--1650.
\bibitem[VDN92]{VDN92} D.V. Voiculescu, K.J. Dykema and A. Nica: \textit{Free random variables}. CRM Monograph Series
1, American Mathematical Society, Providence, RI, 1992.
\bibitem [Vo93]{Vo93} D. Voiculescu: {\it The analogues of entropy of of Fisher's information measure in free probability theory}, I, Comm. Math. Phys. \textbf{155} (1993), 71-92.
\bibitem [Vo94]{Vo94} D. Voiculescu: {\it The analogues of entropy and of Fisher's information measure in free probability theory}, II, Invent. Math. {\bf 118} (1994), 411-440.
\bibitem[Vo95]{Vo95} D. Voiculescu: \textit{The analogues of entropy and of Fisher's information measure in free probability theory}, III. Geom. Funct. Anal. \textbf{6} (1996), 172-199.
\bibitem[Vo97]{Vo97} D. Voiculescu: {\it A Strengthened Asymptotic Freeness Result for Random Matrices with Applications to Free Entropy},  Int. Math. Res. Not. 1998, no. {\bf 1}, 41-63.
\bibitem[Vo98]{Vo98} D. Voiculescu: {\it The analogues of entropy and of Fisher's information measure in free probabilility}, V, Invent. Math. {\bf 132} (1998), 189- 227.
\bibitem[Vo99]{Vo99} D. Voiculescu: {\it The analogs of entropy and of Fisher's information measure in free
  probability theory}, VI : {\it Liberation and Mutual free Information}, Adv.  Math., {\bf 146} (1999), 101-166.
\bibitem[Vo01]{Vo01} D. Voiculescu: {\it Cyclomorphy}, Int. Math. Res. Not. 2002, no. {\bf 6}, 299-332.
\end{thebibliography}
\end{document}